\documentclass{amsart}

\usepackage{amscd,amssymb,amsmath,amsfonts,bbm,calligra,xspace,amsthm}
\usepackage[T1]{fontenc}
\usepackage{lmodern}
\usepackage{mathtools}
\usepackage{enumitem}
\usepackage{multicol}
\usepackage[all]{xy}
\usepackage{mathrsfs}
\usepackage{footmisc}
\usepackage{xcolor}
\definecolor{darkblue}{rgb}{0,0,0.8}
\definecolor{darkgreen}{rgb}{0,0.4,0}
\usepackage[colorlinks=true,linkcolor=darkblue, citecolor=darkgreen,  urlcolor=darkgreen, unicode,pdfborder={0 0 0},final]{hyperref}

\usepackage{mathrsfs,mathabx}
\usepackage{tikz-cd}
\usepackage{indentfirst}
\usepackage{verbatim}
\usepackage{geometry}
\usepackage{ulem}

\newtheorem{thm}{Theorem}[section]
\newtheorem{prop}[thm]{Proposition}

\newtheorem{lem}[thm]{Lemma}

\theoremstyle{definition}
\newtheorem{Def}[thm]{Definition}
\newtheorem{quest}[thm]{Question}

\theoremstyle{remark}
\newtheorem{rem}[thm]{Remark}

\numberwithin{equation}{section}

\newcommand{\Id}{\mathrm{Id}}

\newcommand{\cl}{\mathrm{cl}}

\newcommand{\N}{\mathrm{N}}

\newcommand{\sm}{\mathrm{sm}}

\newcommand{\Spec}{\mathrm{Spec}}

\newcommand{\Pic}{\mathrm{Pic}}

\newcommand{\MU}{\mathrm{MU}}

\newcommand{\CH}{\mathrm{CH}}
\newcommand{\Gr}{\mathrm{Gr}}

\newcommand{\op}{\mathrm{op}}
\newcommand{\fl}{\mathrm{fl}}
\newcommand{\Ch}{\mathrm{Ch}}
\newcommand{\rmsm}{\mathrm{sm}}
\newcommand{\rmD}{\mathrm{D}}
\newcommand{\rmVAD}{\mathrm{VAD}}
\def\alg{\mathrm{alg}}

\newcommand{\Sm}{\mathrm{\mathbf{Sm}}}
\newcommand{\R}{\mathrm{\mathbf{R}}}
\newcommand{\pt}{\mathrm{pt}}

\newcommand{\bbC}{\mathbb{C}}

\newcommand{\bbL}{\mathbb{L}}

\newcommand{\bbP}{\mathbb{P}}

\newcommand{\bbZ}{\mathbb{Z}}

\newcommand{\cO}{\mathcal{O}}

\newcommand{\wt}{\widetilde}

\newcommand{\ra}{\rightarrow}

\newcommand{\hra}{\hookrightarrow}

\newcommand{\thra}{\twoheadrightarrow}

\newcommand{\xra}{\xrightarrow}

\newcommand{\tbf}{\textbf}

\begin{document}

\title[Smoothing low-dimensional cycles in Algebraic Cobordism]{Smoothing low-dimensional cycles in Algebraic Cobordism}

\author{Chuhao Huang}
\address{D\'epartement de math\'ematiques et applications, \'Ecole normale sup\'erieure, CNRS,
45 rue d'Ulm, 75230 Paris Cedex 05, France}
\email{chuhao.huang@ens.fr}

\renewcommand{\abstractname}{Abstract}

\begin{abstract}
    We show that every cycle in the degree $d$ algebraic cobordism group $\Omega_d(X)$ of a smooth projective variety $X$ over a field of characteristic $0$ is smoothable when $2d<\dim(X)$, that is, it can be written as a linear combination of cycles represented by smooth closed subvarieties of $X$. This generalizes a result of Koll\'ar and Voisin from Chow groups to algebraic cobordism groups.
\end{abstract}

\maketitle

\section{Introduction}

Throughout this paper, we work over a fixed field $k$ of characteristic $0$.

\subsection{Smoothability of algebraic cycles}

The question of smoothability of algebraic cycles was first asked over $k=\bbC$ and in the context of singular cohomology by Borel and Haefliger in \cite[Section 6.17]{Borel1961}. The question can be stated as follows. Let $X$ be a smooth complex projective variety of dimension~$n$. We define $H^*(X,\bbZ)_{\alg}$ to be the subgroup of $H^*(X,\bbZ)$ generated by fundamental classes of subvarieties of $X$, and define $H^*(X,\bbZ)_{\sm}$ similarly with the additional requirement that the subvarieties be smooth. Classes in $H^*(X,\bbZ)_{\sm}$ are called smoothable. The question is whether we have~$H^*(X,\bbZ)_{\alg}=H^*(X,\bbZ)_{\sm}$ or not.

It was soon realized that this equality does not hold in general. The first negative result was obtained by Hartshorne, Rees and Thomas in \cite{Hartshorne1974}. They show that the second Chern class~$c_2(T)$ of the tautological bundle $T$ over the complex Grassmannian $G(3,n)$ is never smoothable for $n\geq 6$. Benoist and Debarre prove in \cite{Debarre1995,BenoistDebarre2023} that for a very general complex Jacobian $(X,\theta)$ of dimension $n$, the minimal class $\frac{\theta^c}{c!}$ is not smoothable for integers $c\leq\frac{n+2}{4}$ satisfying a certain arithmetic condition. For every pair $(d,n)$ with $d\geq\frac{n}{2}$ and such that $c\coloneqq n-d$ satisfies a certain arithmetic condition, Benoist constructs a smooth projective variety of dimension $n$ admitting non-smoothable $d$-cycles in \cite{Olivier2024}. 

Among all the counter-examples mentioned above, the dimension of the cycle is always at least half of the dimension of the ambient variety. When the dimension of the cycle is relatively small compared to the dimension of the variety, there are positive results. Hironaka proves in \cite{Hironaka1968} that~$d$-cycles on a smooth projective variety of dimension $n>2d$ are smoothable when $d\leq 3$. In a recent break through \cite{KollarVoisin2024}, Koll\'ar and Voisin give a complete answer to this question by generalizing Hironaka's result to all~$d\geq 0$.

\begin{thm}[{\cite[Theorem 1.2]{KollarVoisin2024}}]
\label{KV's theorem: smoothability in the Chow ring}
    Let $X$ be a smooth projective variety over $k$ of dimension~$n$. Then, for any $d$ such that $2d<n$, every cycle in $\CH_d(X)$ is smoothable.
\end{thm}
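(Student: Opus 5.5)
The argument follows the strategy of Koll\'ar and Voisin: first write an arbitrary cycle as a combination of pushforwards of smooth varieties, then move those maps into embeddings by a general projection, in the range $2d<n$.

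\textbf{Step 1: reduction to images of smooth varieties.} Let $Z\subset X$ be an integral subvariety of dimension $d$, and let $\tilde Z\to Z$ be a resolution of singularities; this exists because $\mathrm{char}\,k=0$. We obtain a proper morphism $f\colon \tilde Z\to X$ from a smooth projective $d$-dimensional variety with $f_*[\tilde Z]=[Z]$. It therefore suffices to show that $f_*[Y]$ is smoothable for every morphism $f\colon Y\to X$ with $Y$ smooth projective of dimension $d$.

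\textbf{Step 2: making $f$ an embedding up to correction terms.} Choose a very ample line bundle $L$ on $X$ and an embedding $Y\hookrightarrow \bbP^N$. The idea is to deform the graph map $Y\to X\times\bbP^N$ and project back to $X$. A generic morphism from a $d$-dimensional smooth variety to an $n$-fold with $n\geq 2d+1$ is an embedding. Here, however, we cannot freely deform maps into a fixed $X$. Koll\'ar--Voisin instead take a smooth complete intersection $W\subset X\times\bbP^N$ of dimension $n$ containing the graph $\Gamma_f\cong Y$. The projection $p\colon W\to X$ is generically finite of some degree $m$. One then has
\[ p_*[Y] \;=\; f_*[Y], \]
and $Y$ is smooth inside the smooth variety $W$. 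The task is now to push a smooth cycle along a finite map and keep it smooth. I would replace $p$ by a general linear projection $X\times\bbP^N\supset W\to X\times\bbP^{M}$ and iterate. Each step lowers the ambient projective dimension. The error terms come from double-point loci, and these have strictly smaller "complexity". Alternatively, one expresses $[Y]$ on $W$ as a combination of intersections with ample divisors plus classes supported on residual loci. Koll\'ar--Voisin's key input is that for $2d<n$, the map $p|_{Y'}$ is an embedding for a general deformation $Y'$ of $Y$ in a family of smooth subvarieties of $W$ which sweeps $W$ sufficiently. Such families are obtained by taking $Y$ as a component of a complete intersection of high-degree ample divisors on $W$; the residual component is smooth by Bertini.

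\textbf{Step 3: induction and main obstacle.} The class $f_*[Y]$ is written as $p_*[\text{c.i.}]-p_*[\text{residual}]$. The complete intersection of general ample divisors on $W$ moves in a basepoint-free linear system. By the dimension count $2d<n$, its general member is embedded by $p$, since double points of a $d$-dimensional image in an $n$-fold occur in codimension $n-2d>0$. The residual must be handled by induction on a suitable invariant, such as the degree of the singular locus or the degree of $f$ onto its image. The main obstacle is this step: guaranteeing that the residual class is again of the form $g_*[Y'']$ with $g$ "closer" to an embedding, so that the induction terminates. I would first try to choose $W$ so that the residual intersects $Y$ in a controlled codimension. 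I would also use that the residual moves in a positive family, so that, after a general choice, the map $p$ restricted to it is an immersion with no double points.
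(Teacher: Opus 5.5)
\textbf{There is a genuine gap, and it lies exactly where you place ``the main obstacle''.} Step~1 is fine. A smooth complete intersection $W\subset X\times\bbP^N$ of dimension $n$ containing $\Gamma_f$ also exists when $n>2d$. After that, two steps fail.

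First, the residual. Bertini only gives smoothness of $R$ away from the base locus $Y$. Write locally $Y=\{x_1=\dots=x_c=0\}$ and the chosen divisors as $\sum_k a_{jk}x_k=0$. Then $R$ is cut out by $Ax=0$ and $\det A=0$. At a point $y\in Y$ where $A|_Y$ has corank $\ge 2$, you can reduce $A$ to $\mathrm{diag}(I_{c-2},B)$ with $B(y)=0$. All equations of $R$ except $c-2$ linear ones then lie in $\mathfrak{m}_y^2$, so $R$ has a $(d+2)$-dimensional Zariski tangent space at $y$. This corank-$\ge 2$ locus has codimension $4$ in $Y$ and is nonempty for $d\ge 4$. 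So $R$ is singular as soon as $d\ge4$; this is why the linkage method (Hironaka) only smooths cycles with $d\le 3$. You also give no invariant that decreases when passing from $Y$ to $R$, or through your iterated projections and double-point loci. So nothing makes the induction terminate.

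Second, you claim that a general $d$-dimensional complete intersection in $W$ is embedded by $p$ because ``double points occur in codimension $n-2d$''. That count is the content of \cite[Proposition 2.1]{KollarVoisin2024}, and it uses that the map is flat, so that the fibre product $W\times_X W$ has the expected dimension. Your $p\colon W\to X$ is not flat. Since $\Gamma_f\subset W$, $p$ has positive-dimensional fibres wherever $f$ does, for instance over the image of the exceptional locus of $\tilde Z\to Z$. So this step needs an argument you have not given.

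The missing idea is the one the paper recalls from Koll\'ar--Voisin, which avoids residuals altogether. They prove $\CH^*(X)=\CH^*(X)_{\fl_*\Ch}$ \cite[Theorem 1.6]{KollarVoisin2024} from two inputs:
\begin{enumerate}
\item[(i)] the subgroup $\CH^*(X)_{\fl_*\Ch}$ is stable under pushforward along blow-ups of smooth complete bundle-sections;
\item[(ii)] the cbs resolution theorem \cite[Corollary 1.10]{KollarVoisin2024}.
\end{enumerate}
For (ii), you first use your Step~1 and the graph embedding into $X\times\bbP^N$, whose projection to $X$ is flat, to reach $4\dim Z<\dim X$. Iterated cbs blow-ups $\Pi$ then give a complete intersection $Z_{r+1}$ with $\Pi_*[Z_{r+1}]=[Z]$ exactly, with no correction term. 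Every class thus reduces to flat pushforwards of general complete intersections of very ample divisors. Because the maps are flat, \cite[Proposition 2.1]{KollarVoisin2024} shows the images are smooth when $2d<n$. Flatness and the cbs machinery are what replace your unfinished residual induction.
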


The goal of this article is to prove an analogous result in algebraic cobordism. Our main result is as follows.

\begin{thm}[Theorem \ref{main theorem}]
\label{theorem: main theorem in the introduction}
    Let $X$ be a smooth projective variety over $k$ of dimension $n$. Then classes in the degree $d$ algebraic cobordism group $\Omega_d(X)$ are smoothable when $2d<n$.
\end{thm}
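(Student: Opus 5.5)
The plan is to reduce the statement to the Chow-group result of Koll\'ar and Voisin (Theorem \ref{KV's theorem: smoothability in the Chow ring}) by combining it with the structure of $\Omega_*$ as a theory generated by classes of projective morphisms from smooth varieties. Here ``smoothable'' means: the class lies in the subgroup $\Omega_d(X)_{\sm}$ generated by classes $[Z\hookrightarrow X]$ with $Z\subset X$ a smooth closed subvariety of dimension $d$. I will argue by induction on $d$; the case $d=0$ is clear, since $\Omega_0(X)\cong\CH_0(X)$ and points are smooth.

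\textbf{Step 1: Levine--Morel and the kernel of $\Omega\to\CH$.} By Levine--Morel, $\CH_*(X)\cong\Omega_*(X)\otimes_{\bbL}\bbZ$. Hence the kernel of the surjection $\Omega_d(X)\to\CH_d(X)$ is $\bbL_{>0}\cdot\Omega_{*}(X)$. Concretely, every element of the kernel is a sum of terms $[M]\cdot\beta$, where $M$ is a smooth projective variety of positive dimension $i$ and $\beta\in\Omega_{d-i}(X)$.

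\textbf{Step 2: lifting smooth Chow cycles.} Given $\alpha\in\Omega_d(X)$, Theorem \ref{KV's theorem: smoothability in the Chow ring} writes its image in $\CH_d(X)$ as $\sum n_j[Z_j]$ with $Z_j\subset X$ smooth closed subvarieties. Then $\alpha-\sum n_j[Z_j\hookrightarrow X]$ lies in the kernel described in Step 1. So it suffices to show that each product $[M]\cdot\beta$ is smoothable. By induction $\beta$ is smoothable, since $d-i<d$ and $2(d-i)<n$. We may therefore take $\beta=[W\hookrightarrow X]$ with $W\subset X$ smooth of dimension $d-i$.

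\textbf{Step 3 (main obstacle): realizing $[M]\cdot[W\hookrightarrow X]$ by a smooth subvariety.} This class is represented by the projective morphism $f\colon M\times W\to W\hookrightarrow X$, where $M\times W$ is smooth of dimension $d$. The difficulty is to move $f$ within its cobordism class to a closed embedding.

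\emph{First attempt.} Embed $M\hookrightarrow\bbP^N$ and factor $f$ as a closed embedding $M\times W\hookrightarrow\bbP^N\times X$ followed by projection. Then try to replace this by a generic projection/deformation into $X$, using $2d<n$ to get an embedding, in the spirit of the Whitney-type arguments that Koll\'ar--Voisin use.

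\emph{Preferred approach.} I expect generic projection to fail in general, since $X$ is not a projective space. The more robust route is to use the fact that $\bbL\otimes\mathbb Q$, and indeed $\bbL$ itself via Milnor hypersurfaces, is generated by explicit varieties: products of projective spaces and Milnor hypersurfaces $H_{a,b}\subset\bbP^a\times\bbP^b$. This reduces the problem to the classes $[\bbP^a]\cdot[W]$ and $[H_{a,b}]\cdot[W]$. For these, I would express $[\bbP^a]\cdot[W\hookrightarrow X]$, via the formal group law and the projective bundle formula, as a combination of classes of the form $[\bbP(E)\to W]$ with $E$ a vector bundle on $W$. I would then realize such projective bundles as smooth subvarieties of $X$ of dimension $d$: take a tubular-type construction, namely the zero locus of a general section of a suitably twisted bundle on the blow-up of $X$ along $W$, and push it down, arguing that it stays embedded because $d<n/2$. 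Any error terms produced along the way lie in $\bbL_{>0}\Omega_{<d}$ or in lower-dimensional pieces, and are handled by induction.

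This last embedding/deformation step is where I expect the real work to lie. It will likely require revisiting the Koll\'ar--Voisin technique itself, i.e.\ building smooth cycles through generically finite maps that become embeddings in the range $2d<n$, rather than using their theorem as a black box.
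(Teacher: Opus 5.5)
\textbf{Verdict: there is a genuine gap at Step~3.}

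Your Steps 1 and 2 are sound. They are the same kernel-of-$\theta$ induction the paper uses, via Lemma~\ref{lemma: kernel of theta(X)}. But they only reduce the theorem to the claim that, when $2d<n$, the degree-$d$ part of $\bbL_{>0}\cdot\Omega_*(X)_{\sm}$ lies in $\Omega_d(X)_{\sm}$. In other words, you need $[M\times W\to W\hookrightarrow X]$ to be smoothable. Step~3, which must prove this, is still a heuristic sketch, as you acknowledge.

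This is not a technical residue; it is essentially all the theorem adds to the Chow case. Already for $W$ a point you must explain why $[M]\cdot[\pt\hookrightarrow X]$, with $\dim M=d$, is a combination of classes of smooth $d$-dimensional subvarieties. The Koll\'ar--Voisin criterion (Theorem~\ref{KV's theorem: criteria for smoothability}) needs a \emph{flat} morphism and a subvariety in general position, such as a general complete intersection of very ample divisors. None of your constructions meets these hypotheses:
\begin{enumerate}
\item your map $M\times W\to X$ factors through the non-flat embedding $W\hookrightarrow X$;
\item $M\times W\subset\bbP^N\times X$ is not in general position;
\item the blow-down $\mathrm{Bl}_W X\to X$ in your ``tubular'' construction is not flat either.
\end{enumerate}
So nothing guarantees that your zero locus maps isomorphically onto its image. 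You also do not show that its class equals $[\bbP(E)\to W\hookrightarrow X]$ up to controlled terms. Finally, degree-$d$ error terms in $\bbL_{>0}\cdot\Omega_{<d}(X)$ are exactly the kind of class Step~3 is meant to treat. Induction on $d$ alone does not absorb them; you would need a second induction, on the $\bbL$-degree.

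The missing idea is to replace ``smoothable'' by an invariant that is visibly stable under multiplication by $\bbL$. The paper uses $\Omega_*(X)_{\fl_*\rmVAD}$, generated by classes $\phi_*\bigl(\prod_i c_1(L_i)\bigr)$ with $\phi\colon Y\to X$ flat and the $L_i$ very ample. This subgroup has three properties:
\begin{enumerate}
\item[(i)] It is smoothable for $2d<n$ (Proposition~\ref{proposition: f_*VAD is smoothable within the Whitney range}). One represents $\prod_i c_1(L_i)$ by a general complete intersection $Z\subset Y$ and applies Theorem~\ref{KV's theorem: criteria for smoothability}.
\item[(ii)] It is an $\bbL$-submodule (Proposition~\ref{proposition: f_*VAD is a L_*-module}). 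With $p,q$ the projections of $M\times Y$ to $Y$ and $M$, the identity $[M]\cdot\phi_*\bigl(\prod_i c_1(L_i)\bigr)=(\pi_M\times\phi)_*\bigl(\prod_i c_1(p^*L_i)\bigr)$ keeps the pushforward flat. This is exactly what your factorization through $W\hookrightarrow X$ destroys. Very ampleness is restored by twisting with $q^*L$. The formal-group-law corrections are handled by a double induction on $(c,d)$. The top correction is removed by replacing $L$ with $L^{\otimes m}$ and using B\'ezout.
\item[(iii)] It equals all of $\Omega_*(X)$ (Theorems~\ref{theorem: all classes are f_*D} and~\ref{theorem: all classes are fl_*VAD}).
\end{enumerate}
For (iii), the Chow-level input is not Theorem~\ref{KV's theorem: smoothability in the Chow ring}. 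It is the cbs resolution (Theorem~\ref{KV's theorem: cbs resolution}), combined with stability of $\Omega_*(-)_{\fl_*\rmD}$ under pushforward along cbs blow-ups. This is needed because the Chow representatives must lift to classes already in the invariant subgroup. With that invariant in place, your Step~1--2 induction is essentially the paper's proof of Theorem~\ref{theorem: all classes are f_*D}.
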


In topology, singular cohomology $H^*$ is refined by complex cobordism $MU^*$, which contains much more information than the former. For example, for each $n\in\bbZ_{\geq 0}$, the group $MU_n(\pt)$ consists of bordism classes of compact stably almost complex manifolds. Two such manifolds $M$ and $N$ represent the same class in $MU_*(\pt)$ if and only if their Chern numbers coincide (see \cite[Chapter VII]{Stong1968}).

In algebraic geometry, we can make an analogy with the topological setting by viewing the Chow ring as the algebraic version of singular cohomology. Then complex cobordism $MU^*$ corresponds to algebraic cobordism $\Omega^*$, which was introduced by Levine and Morel. In \cite{LevineMorel2007}, Levine and Morel define the notion of complex oriented cohomology theories on the category $\Sm_k$ of smooth quasi-projective varieties over $k$, and define $\Omega^*$ to be the universal such theory (see Section \ref{subsection: oriented cohomology theories} and \ref{subsection: Algebraic cobordism and its geometric interpretation}). Since the Chow ring is also a complex oriented cohomology theory, there exists a natural morphism from $\Omega^*$ to~$\CH^*$. Similar to the topological setting, algebraic cobordism $\Omega^*$ contains much more information than the Chow ring~$\CH^*$. In particular, the coefficient ring $\Omega^*(k)$ detects Chern numbers as in the topological setting. Therefore, Theorem \ref{theorem: main theorem in the introduction} refines the original result of Koll\'ar and Voisin, namely Theorem~\ref{KV's theorem: smoothability in the Chow ring}, by incorporating information on Chern numbers of algebraic cycles.

\subsection{The strategy of Koll\'ar and Voisin}

Koll\'ar and Voisin's proof of their Theorem \ref{KV's theorem: smoothability in the Chow ring} is based on new structural results on the Chow ring. More precisely, for $X\in\Sm_k$, they define $\CH^*(X)_{\fl_*\Ch}$ to be the subgroup of~$\CH^*(X)$ generated by cycles of the form $\phi_*z$, where $\phi:Y\ra X$ is a proper and flat morphism from a variety $Y\in\Sm_k$, and $z$ lies in the subring~$\CH^*(Y)_{\Ch}$ of $\CH^*(Y)$ generated by Chern classes of vector bundles on $Y$. 
Koll\'ar and Voisin prove the following surprising result.

\begin{thm}[{\cite[Theorem 1.6]{KollarVoisin2024}}]
\label{KV's theorem: CH(X)=CH(X)_flCh}
    Let $X$ be a smooth projective variety over $k$. Then
    \begin{equation*}
        \CH^*(X)=\CH^*(X)_{\fl_*\Ch}.
    \end{equation*}
\end{thm}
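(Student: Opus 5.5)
The plan is to reduce to the case of a single integral subvariety and then realize its class as a flat proper pushforward of a polynomial in Chern classes. Since $\CH^*(X)$ is generated by classes $[Z]$ of integral closed subvarieties $Z\subset X$, it suffices to show each $[Z]$ lies in $\CH^*(X)_{\fl_*\Ch}$. The natural device is a Grassmannian-type construction: embed $X\subset \bbP^N$ and let $c$ be the codimension of $Z$. By Riemann--Roch without denominators, or by resolving $\cO_Z$ by a locally free resolution, one gets
\begin{equation*}
c_c(\cO_Z)=(-1)^{c-1}(c-1)!\,[Z]
\end{equation*}
modulo lower-dimensional terms. This shows $\CH^*(X)_{\Ch}$ captures $[Z]$ up to a torsion factor $(c-1)!$ and lower-dimensional corrections. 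The corrections are handled by induction on $\dim Z$.

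To remove the factor $(c-1)!$, I will use flat pushforwards. Consider a smooth projective variety $Y$ with a flat proper morphism $\phi:Y\to X$, for instance a relative flag or Grassmannian bundle over $X$, or a family of linear sections. The aim is to find a subvariety $W\subset Y$ with $\phi_*[W]=[Z]$ and such that $[W]$ is expressible in Chern classes on $Y$.

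\begin{enumerate}
\item Take the bundle $\pi:P=\Gr(r,E)\to X$ for a suitable vector bundle $E$, for example a twist $\cO(m)^{\oplus M}$ or the pullback of $T_{\bbP^N}$.
\item In $P$, consider the locus $W$ of points $(x,\Lambda)$ with $x\in Z$ and $\Lambda$ containing a fixed section direction. Choose it so that $\pi|_W:W\to Z$ is birational, so $\pi_*[W]=[Z]$.
\item Arrange that $W$ is the degeneracy locus of a map of vector bundles on $P$, cut out with the expected codimension. Porteous' formula then writes $[W]$ as a determinant in Chern classes.
\end{enumerate}

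Here $\pi$ is smooth, hence flat and proper. This would give $[Z]\in\CH^*(X)_{\fl_*\Ch}$.

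The main obstacle is step 3: for a singular $Z$ one cannot expect $W$ to be a degeneracy locus with the correct codimension on the nose. The fix I would try first is to work with the graph of a generic projection. Choose a finite generic linear projection $p:X\to\bbP^n$. Then $Z$ is a component of $p^{-1}(p(Z))$, and $p(Z)$ has a class that is a multiple of a power of the hyperplane class $h$. Multiples of $h^c$ are Chern classes on $\bbP^n$.

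The flatness of the pullback is available because a finite surjective map between smooth varieties is flat. So $p^*(\deg\cdot h^c)$ is a Chern class expression on $X$ containing $[Z]$ plus residual components. The residual components are then controlled by varying the projection and taking differences, pushing the residual terms into lower-dimensional cycles. I would then finish by induction on $\dim Z$, using that $\CH^*(X)_{\fl_*\Ch}$ is stable under these operations.

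The delicate step is getting the multiplicity exactly $1$, integrally. For this I would use linkage: the residual of $Z$ in the complete intersection $p^{-1}p(Z)$ is geometrically linked to $Z$. Combining two different projections should isolate $[Z]$ integrally.
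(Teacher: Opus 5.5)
\textbf{There is a genuine gap: your proposal does not prove the statement.} It fails exactly where the factor $(c-1)!$ is supposed to disappear.

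\textbf{Why the projection argument does not close.}
\begin{itemize}
\item Pulling back along a finite flat projection $p\colon X\to\bbP^n$ gives $[Z]+[R]=\deg(p(Z))\,H^c$ with $H=p^*h$. The residual $R$ has the \emph{same} dimension as $Z$. It is not a lower-dimensional correction, so your induction on $\dim Z$ never touches it, and a priori $[R]$ is no easier than $[Z]$.
\item Varying the projection does not help. For two generic linear projections of the same embedding $X\subset\bbP^N$, the right-hand side is $\deg(Z)H^c$ both times. Subtracting gives $[R_1]=[R_2]$, which eliminates $[Z]$ rather than isolating it.
\item Linkage is not available as stated. $p^{-1}(p(Z))$ is not a complete intersection unless $p(Z)$ is one. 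Even a genuine linkage only yields $[Z]=[V]-[Z']$, with $Z'$ of the same dimension and no better than $Z$.
\item Structurally, the projection argument only uses Chern classes pulled back to $X$ itself; no nontrivial flat pushforward enters. If it closed, it would give $[Z]\in\CH^*(X)_{\Ch}$, i.e.\ $\CH^*(X)=\CH^*(X)_{\Ch}$. That is far stronger than the theorem. Your own Riemann--Roch step shows that Chern classes on $X$ naturally give only $(c-1)![Z]$, which is precisely why flat pushforwards are allowed.
\item Your Grassmannian-bundle steps 1--3 are the only place a pushforward appears. They are too unspecified to evaluate, and you concede they fail for singular $Z$.
\end{itemize}

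\textbf{The missing idea.} The needed flexibility comes from modifying the ambient variety, not from linkage on $X$. The paper takes this argument from Koll\'ar--Voisin and reproduces it for $\Omega^*$ in Theorem~\ref{theorem: all classes are f_*D}. It runs as follows.
\begin{enumerate}
\item Reduce to a smooth $Z$ of small dimension. Write $[Z]$ as the pushforward from a resolution, embed the resolution via a graph into $\bbP^N\times X$ with $N>4\dim Z$, and use that pushforward along the flat projection preserves $\fl_*\Ch$.
\item Show $\CH^*(-)_{\fl_*\Ch}$ is stable under pushforward along smooth hypersurface embeddings. This uses the graph correspondence and the blow-up of $Y\times X$ along the graph, whose map to $X$ is flat.
\item Extend this to smooth cbs embeddings, by induction on the codimension using $\bbP(E)$.
\item Extend it further to blow-ups with smooth cbs centres, by realizing $\wt{X}$ as a cbs in $\bbP(E^\vee)$.
\item Apply the cbs resolution theorem (Theorem~\ref{KV's theorem: cbs resolution}). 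It produces a sequence of such blow-ups $\Pi$ and a complete intersection $Z_{r+1}$ with $\Pi_*[Z_{r+1}]=[Z]$. Since $[Z_{r+1}]$ is a product of first Chern classes, $[Z]\in\CH^*(X)_{\fl_*\Ch}$.
\end{enumerate}
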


With this result at hand, their main result, Theorem \ref{KV's theorem: smoothability in the Chow ring}, is a direct consequence of the following smoothability result.

\begin{thm}[{\cite[Proposition 1.5]{KollarVoisin2024}}]
\label{KV's theorem: cycles in CH_d(X)_flCh are smoothable when 2d<dim(X)}
    Let $X$ be a smooth variety. Then cycles in $\CH_d(X)_{\fl_*\Ch}$ are smoothable when $2d<\dim(X)$.
\end{thm}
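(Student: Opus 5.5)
The plan is to reduce to a single generator $\phi_*z$, where $\phi:Y\ra X$ is proper and flat with $Y\in\Sm_k$ and $z$ is a monomial in Chern classes of vector bundles on $Y$. Then we realize $z$ by smooth subvarieties via Kleiman--Bertini-type transversality. Write $n=\dim X$ and let $r$ be the relative dimension of $\phi$, so $\dim Y=n+r$. A class $z\in\CH^*(Y)_{\Ch}$ pushing forward to $\CH_d(X)$ lies in $\CH_{d+r}(Y)$, i.e.\ in codimension $c=n-d$ on $Y$.

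\textbf{Step 1: reduce to globally generated bundles.} Twisting by a sufficiently ample line bundle $L$ (after a projective compactification, or using a quasi-projective embedding), every vector bundle $E$ on $Y$ has $E\otimes L^{m}$ globally generated. The Chern classes of $E$ are then polynomials in Chern classes of $E\otimes L^m$ and $c_1(L^m)$. Hence $\CH^*(Y)_{\Ch}$ is generated as a group by monomials $c_{i_1}(E_1)\cdots c_{i_s}(E_s)$ with all $E_j$ globally generated. In fact it is convenient to rewrite these as $\pm$ linear combinations of classes of degeneracy loci. By the Giambelli / Schur polynomial formalism, the products become Schur classes $s_\lambda(E)$ of a single globally generated bundle $E=\bigoplus E_j$ (taking sums and using Whitney). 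Alternatively, each $c_i(E)$ is the class of a degeneracy locus $D_i$ of $\mathrm{rk}(E)-i+1$ general sections. So $z$ becomes an integral combination of classes of (intersections of) degeneracy loci of general sections of globally generated bundles, i.e.\ pullbacks under classifying maps $f:Y\ra G$ to a Grassmannian of Schubert cycles $\sigma$.

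\textbf{Step 2: general position.} Take $f:Y\ra G$ a morphism with $f^*Q=E$, and Schubert varieties $\sigma$ of codimension $c$. By Kleiman transversality (characteristic $0$), a general translate $g\sigma$ satisfies that $f^{-1}(g\sigma)$ is of pure codimension $c$ and represents $f^*[\sigma]$. Moreover, its singular locus is contained in $f^{-1}(g\,\mathrm{Sing}\,\sigma)$. Schubert varieties are smooth in codimension $\ge 1$ within themselves; in fact $\mathrm{Sing}(\sigma)$ has codimension at least $c+2$ in $G$ (it is at least $2$ in $\sigma$, by normality). So $Z=f^{-1}(g\sigma)$ has singular locus of dimension $\le \dim Y - c-2 = d+r-2$.

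\textbf{Step 3: push forward and smooth.} Consider $\phi|_Z:Z\ra X$ with $\dim Z=d+r$. Because $\phi$ is flat and $\sigma$ moves in a family whose translates cover $G$, we may further choose $g$ so that $\phi|_{Z}$ is generically of relative dimension $r$ onto a $d$-dimensional image, or else contributes zero. To get a \emph{smooth closed subvariety} of $X$ we argue as follows. We replace $\phi_*[Z]$ by a cycle whose support is an embedded smooth $d$-fold. This is where $2d<n$ enters: a smooth projective $d$-dimensional variety $W$ admits, after composition with a general projection, a map to $X$ which is a closed embedding when $2d+1\le n$ (the classical embedding-dimension count).

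Concretely, the route is to cut $Z$ by $r$ general hypersurface sections relative to $\phi$, so that the cycle is realized as the pushforward from a $d$-dimensional $W\subset Y$. Resolving singularities of $W$ keeps the class modulo lower-dimensional corrections, which are handled by induction on $d$. Then we show $\phi|_{W}$ can be deformed to an embedding since $2d<n$. The main obstacle is this last step: making the map $W\ra X$ a closed embedding within the given rational equivalence class, while keeping control of the correction terms produced by the resolution and hyperplane cutting. I would first try to arrange this by choosing the flat family $\phi$ and the Schubert translates simultaneously general, so that double points of $W\ra X$ appear in expected dimension $2d-n<0$, hence are absent.
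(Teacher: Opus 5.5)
Your proposal has a genuine gap, and there is also a dimension error. Proper pushforward preserves dimension, so $\phi_*z\in\CH_d(X)$ requires $z\in\CH_d(Y)$, i.e.\ codimension $n+r-d$ on $Y$, not $z\in\CH_{d+r}(Y)$ of codimension $n-d$. With your count, if $\phi|_Z$ has $r$-dimensional general fibres then $\phi_*[Z]=0$. Cutting $Z$ by $r$ further hypersurfaces replaces $z$ by $z\cdot h^r$, which is a different class, so Step~3 is not computing $\phi_*z$. This part is repairable by using Schubert conditions of codimension $n+r-d$. (Note also that $c_i(E_1)c_j(E_2)$ is not a polynomial in the Chern classes of $E_1\oplus E_2$, so you would need a map to a product of Grassmannians.)

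The real gap is smoothness. Kleiman transversality only makes $Z=f^{-1}(g\sigma)$ smooth away from $f^{-1}(g\,\mathrm{Sing}\,\sigma)$, which is typically nonempty, so degeneracy-locus representatives of Chern classes are singular. Your remedy is to resolve $W$ and then deform $W\ra X$ to a closed embedding ``since $2d<n$''. That step is not available. General projection works for maps to $\bbP^N$, not to an arbitrary $X$, where the map may admit no useful deformations. Moreover, if such a deformation inside the rational equivalence class were available, then together with resolution of singularities it would smooth every $d$-cycle on $X$ directly, with no use of flatness or Chern classes. That is precisely the difficulty the statement is designed to circumvent. Nor can the double-point count in expected dimension $2d-n<0$ be run on a resolution $\wt{W}\ra W$, which is not injective. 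And a singular $Z$ has singular image even when $\phi|_Z$ is an isomorphism onto its image.

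The missing idea is to make the cycle on $Y$ a \emph{smooth} subvariety in general position. This means replacing Chern class monomials by products of very ample divisors on a variety flat over $X$, rather than by degeneracy loci.

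\begin{itemize}
\item Let $\pi:\mathrm{Fl}\ra Y$ be a fibre product of flag bundles of the bundles involved. It is smooth and projective, so $\phi\circ\pi$ is still proper and flat.
\item By the splitting principle, $z=\pi_*(\pi^*z\cdot\omega)$ for a suitable polynomial $\omega$ in the Chern roots with $\pi_*\omega=1_Y$. So $\phi_*z$ is a flat pushforward of a polynomial in first Chern classes of line bundles.
\item Since $c_1$ is additive in $\CH^*$, writing each line bundle as $A\otimes B^\vee$ with $A,B$ very ample reduces everything to classes $\psi_*(c_1(A_1)\cdots c_1(A_s))$, with $\psi$ proper and flat and each $A_i$ very ample.
\item A general complete intersection $Z$ of members of the linear systems $|A_i|$ is smooth of dimension $d$ and in general position \cite[Remark 2.2]{KollarVoisin2024}.
\item Then \cite[Proposition 2.1]{KollarVoisin2024} shows that $\psi|_Z$ is an isomorphism onto the smooth subvariety $\psi(Z)$, so $\psi_*[Z]=[\psi(Z)]$. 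This is where flatness of $\psi$ and $2d<n$ enter: double points and ramification of $Z\ra X$ occur in negative expected dimension.
\end{itemize}

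This is the route the paper follows for $\Omega^*$ in Proposition~\ref{proposition: f_*VAD is smoothable within the Whitney range}. There, the reduction to very ample line bundles becomes the nontrivial part, because $c_1$ is no longer additive.
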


To prove Theorem \ref{KV's theorem: CH(X)=CH(X)_flCh}, they show that $\CH^*(-)_{\fl_*\Ch}$ is stable under pushforwards by blow-ups of smooth \textit{complete bundle-sections} (or ``cbs'' for short). Here, a subvariety $Z\subset X$ of codimension $c$ is a complete bundle-section if $Z$ is the zero-locus of a section of a vector bundle of rank~$c$ over~$X$. The stability result of $\CH^*(-)_{\fl_*\Ch}$ allows them to prove Theorem \ref{KV's theorem: CH(X)=CH(X)_flCh} up to taking (iterated) blow-ups along smooth cbs. They then complete the proof by the following result.

\begin{thm}[{\cite[Corollary 1.10]{KollarVoisin2024}}]
\label{KV's theorem: cbs resolution}
    Let $X$ be a smooth projective variety, and $Z\subset X$ be a smooth subvariety such that $4\dim(Z)<\dim(X)$. Then after successive blow-ups along smooth complete bundle-sections
    \begin{equation*}
        \Pi:X_{r+1}\xra{\tau_{r+1}} X_r\xra{\tau_r}\cdots\xra{\tau_1}X_0\coloneqq X,
    \end{equation*}
    there exists a complete intersection subvariety $Z_{r+1}\subset X_{r+1}$ such that $[Z_{r+1}]$ is mapped to $[Z]$ as cycles under the pushforward $\Pi_*$.
\end{thm}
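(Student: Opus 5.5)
Since this is Koll\'ar and Voisin's \cite[Corollary 1.10]{KollarVoisin2024}, quoted here without proof, I will reconstruct the argument along the lines of their method: a general linkage/residual construction combined with blow-ups along complete bundle-sections (cbs).

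Set $n=\dim(X)$ and $d=\dim(Z)$, so that $4d<n$. The first step is to realize $Z$ as a component of a complete intersection. Fix a very ample line bundle $L$ and choose $c=n-d$ general sections of $L^{\otimes m}$, $m\gg0$, vanishing on $Z$. Their common zero locus is a complete intersection
\begin{equation*}
    Y=Z\cup Z',
\end{equation*}
where $Z'$ is the residual $d$-dimensional scheme. Because $L^{\otimes m}\otimes I_Z$ is globally generated and $2d<n$, a Bertini-type dimension count shows that the residual $Z'$ is smooth, at least for general choices. Moreover, $Z\cap Z'$ is a divisor in $Z'$ cut out by a section of a line bundle on $Z'$: this is the classical liaison description of the double locus.

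The second step removes $Z'$ by blow-ups. Let $\tau_1\colon X_1\to X$ be the blow-up along a smooth cbs containing the relevant locus. The natural candidate is the intersection of $c$ general members of the linear system, which is itself a complete intersection and hence a cbs for $L^{\otimes m\oplus c}$. After this blow-up, the strict transform of $Y$ splits $Z$ and $Z'$ apart, so the strict transform $\tilde Z$ is cut out, as a complete bundle-section, inside a neighborhood in $X_1$. More precisely, one arranges that $\tilde Z$ is the zero locus of a section of a rank-$c$ bundle built from $\tau_1^*L^{\otimes m}$ twisted by the exceptional divisor.

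The third step iterates. The obstruction to globally writing $\tilde Z$ as a complete intersection is a residual piece of smaller dimension, for instance $Z\cap Z'$, of dimension $d-1$, or loci where the bundle fails to be generated. On each such piece we apply the same construction. The inequality $4d<n$ is exactly what guarantees that, at each stage, the loci to be blown up are smooth cbs of the required dimension: the general-position arguments need roughly $2\cdot(2d)<n$, because the double locus of the residual construction has expected dimension $2d-(n-2d)$ and must be empty. This point is where the hypothesis enters.

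Finally, the pushforward $\Pi_*$ of a complete intersection $Z_{r+1}$ is $[Z]$, since each $\tau_i$ is birational on the strict transform and the other components are contracted or have been removed. For a properly chosen $Z_{r+1}$, namely one containing no exceptional components, the pushforward is therefore exactly $[Z]$.

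The main obstacle is the second and third steps. One must show that the residual scheme and all auxiliary centers are smooth and are genuine cbs, and that after finitely many blow-ups the transform becomes a global complete intersection rather than only a local one. My first attempt would use Kleiman–Bertini transversality for sections of $I_Z\otimes L^{\otimes m}$, with dimension estimates of the form $\dim\{\text{bad locus}\}\le 2d-(n-2d)<0$. Termination would follow by induction on $d$.
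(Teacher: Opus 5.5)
The paper only quotes this result from Koll\'ar--Voisin, so your sketch has to stand on its own. It fails at the first step: once $d\geq 4$ the general residual $Z'$ is \emph{not} smooth, whatever $n$ is. Write the $f_i\in H^0(X,I_Z\otimes L^{\otimes m})$ locally as $f=Ax$, where $x_1,\dots,x_c$ are local equations of $Z$. Then $I_{Z'}=(f_1,\dots,f_c,\det A)$, and $A|_Z$ is the induced map $\varphi\colon\mathcal{O}_Z^{\oplus c}\to N_{Z/X}^\vee\otimes L^{\otimes m}$. At a point where $\varphi$ has corank $k\geq 2$, row and column operations bring $A$ to $\mathrm{diag}(I_{c-k},B)$ with $B$ vanishing at that point. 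Since $\det B\in\mathfrak{m}^k\subseteq\mathfrak{m}^2$, the Zariski tangent space of $Z'$ there has dimension $d+k>d$. For $m\gg0$ the map $H^0(I_Z\otimes L^{\otimes m})\to H^0(N_{Z/X}^\vee\otimes L^{\otimes m})$ is onto and the target is globally generated. So $\varphi$ is general, and its corank-$\geq 2$ locus has the expected codimension $4$ in $Z$. Its Porteous class is $c_2^2-c_1c_3$ of $N_{Z/X}^\vee\otimes L^{\otimes m}$, whose leading term in $m$ is a positive multiple of $m^4c_1(L|_Z)^4\neq0$, so the locus is nonempty for $d\geq4$. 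This is the classical obstruction that limits Hironaka's linkage method to $d\leq 3$, and no Bertini argument using $2d<n$ removes it. For the same reason, your account of where $4d<n$ enters (``$2d-(n-2d)<0$'') corresponds to no locus in the construction: the relevant bad locus has expected dimension $d-4$, independent of $n$.

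The later steps do not repair this.
\begin{itemize}
\item The intersection of $c$ general members of $|I_Z\otimes L^{\otimes m}|$ is $Y=Z\cup Z'$ itself, which is reducible and singular along the nonempty divisor $Z\cap Z'$. It is therefore not an admissible center.
\item By the same determinantal count, a complete intersection of $k$ general hypersurfaces through $Z$ is smooth only for $k\leq n-2d$. So the smooth complete intersections through $Z$ have dimension $\geq 2d$, and you do not explain how blowing up such a center separates $Z$ from $Z'$.
\item You never show that $Z\cap Z'$ or the other ``residual pieces'' are smooth cbs, so the induction on $d$ is not set up.
\item Your intermediate target (the transform of $Z$ as the zero locus of a section of a rank-$c$ bundle) would make it a cbs, not a complete intersection, and you give no passage between the two.
\end{itemize}
That last passage is the easy part, and it is the idea you are missing at the end. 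Suppose a lift $Z_r\subset X_r$ mapping isomorphically onto $Z$ is a smooth cbs of codimension $e$. Blow up $Z_r$ itself. For $A$ ample on $X_r$ and $m\gg0$, the line bundle $\tau_{r+1}^*A^{\otimes m}\otimes\mathcal{O}(-E)$ is very ample and restricts to $\mathcal{O}(1)$ on the fibres $\mathbb{P}^{e-1}$ of the exceptional divisor $E$. Hence a general $Z_{r+1}=E\cap H_1\cap\cdots\cap H_{e-1}$ is a smooth complete intersection of degree one over $Z_r$, and $\Pi_*[Z_{r+1}]=[Z]$ as cycles. This matches how the paper uses the corollary in the proof of Theorem~\ref{theorem: all classes are f_*D} (``a general complete intersection of very ample divisors in a smooth divisor of $X_{r+1}$''). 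What genuinely has to be proved is that, after blow-ups along smooth cbs, some lift of $Z$ becomes a smooth cbs. That is where a hypothesis like $4\dim(Z)<\dim(X)$ must enter, and your sketch contains no argument for it.
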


As for Theorem \ref{KV's theorem: cycles in CH_d(X)_flCh are smoothable when 2d<dim(X)}, its proof essentially reduces to the following result.

\begin{thm}[{\cite[Proposition 2.1]{KollarVoisin2024}}]
\label{KV's theorem: criteria for smoothability}
    Let $\phi:Y\ra X$ be a proper flat morphism in $\Sm_{k}$. Then for any smooth subvariety $Z\subset X$ in general position such that $2\dim(Z)<\dim(X)$, the restriction~$\phi|_Z:Z\ra\phi(Z)$ is an isomorphism. In particular, the subvariety $\phi(Z)\subset X$ is smooth.
\end{thm}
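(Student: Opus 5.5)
Throughout, "general position" means: $Z$ is a general member of a family of smooth subvarieties of $X$ whose translates cover $X$ transversally. Concretely, I take $Z$ to be a smooth complete intersection of general very ample divisors, or a general translate under a family that moves points of $X$ freely. The plan is a dimension count on the locus where $\phi$ fails to separate points or tangent vectors, followed by a Kleiman-type transversality argument. I show that for $Z$ general, $\phi|_Z$ is injective on points and on tangent vectors. Since $\phi$ is proper, this makes $\phi|_Z$ a closed immersion, so $\phi(Z)\cong Z$ is smooth.

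The dimension of $Y$ must be pinned down first. For $Z\subset X$ and $\phi:Y\to X$, the statement only makes sense if $Z$ sits in $Y$ and $\phi|_Z$ maps into the target, so $Z$ should be a subvariety of $Y$ in general position and $\phi(Z)\subset X$. I read it this way, with $2\dim Z<\dim Y$ and $\dim X$ replaced accordingly. By flatness, $\dim Y=\dim X+r$ where $r$ is the relative dimension. Consider the closed subset
\begin{equation*}
D=Y\times_X Y\setminus\Delta_Y\subset Y\times Y.
\end{equation*}
Flatness gives $\dim(Y\times_X Y)=\dim X+2r$, so $\dim D\le \dim Y+r$. Away from the diagonal, $D$ has codimension at least $\dim Y-r=\dim X$ in $Y\times Y$.

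For a general $Z$ of dimension $d$, the product $Z\times Z$ is a general translate (for the transitive-enough moving family) of a $2d$-dimensional subvariety of $Y\times Y$. It therefore meets $D$ only if $2d\ge\dim X$. I carry this out with Kleiman's transversality theorem in characteristic $0$, applied to the family of translates. If $Y$ has no group action, I instead use the incidence variety of pairs $(Z,(y,y'))$ with $y\ne y'\in Z$: for general complete intersections, each pair of distinct points imposes independent conditions, so the fibers over $(y,y')\in D$ have the expected codimension $2(\dim Y-d)$. Injectivity on points then holds when $2d<\dim Y-r$, i.e. essentially $2\dim Z<\dim X$ in the intended normalization.

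For tangent vectors, I run the same count on the infinitesimal version. The relevant locus is the kernel of $d\phi$, a subset of $\bbP(T_Y)$ of dimension at most $\dim Y+r-1$; its points are the limits of $D$ approaching the diagonal in the blow-up of $Y\times Y$ along $\Delta_Y$. A general $Z$ has $\bbP(T_Z)$ of dimension $2d-1$ and avoids this locus under the same inequality.

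The main obstacle is making "general position" rigorous without a transitive group: I have to show that the incidence correspondences have the expected codimension both at pairs of distinct points and at tangent directions. I would first try taking $Z$ as a general complete intersection of sections of $\cO(m)$ for $m\gg0$. Very ampleness of $\cO(m)$ separates the needed points and jets, and this gives the codimension estimate uniformly over $D$ and over its limit on the exceptional divisor.
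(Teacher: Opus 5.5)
The paper does not prove this statement; it quotes it from Koll\'ar--Voisin. The ``$Z\subset X$'' there is a typo for $Z\subset Y$, and you read it correctly.

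Your overall architecture is the natural Whitney-type one. You show that a general complete intersection $Z$ of very ample divisors on $Y$ misses both the double-point locus $D=Y\times_XY\setminus\Delta_Y$ and the locus $K=\bbP(\ker d\phi)\subset\bbP(T_Y)$. You then conclude that a proper map injective on (geometric) points and on tangent vectors is a closed immersion. The point-separation count is correct. By flatness $\dim D\le\dim X+2r$. A pair of distinct points imposes $2s$ independent conditions on $\prod_i|L_i|$, where $s=\dim Y-d$. The inequality $\dim X+2r<2s$ is exactly $2d<\dim X$.

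The gap is the bound $\dim K\le\dim Y+r-1$. You justify it by asserting that the points of $K$ are the limits of $D$ in $\mathrm{Bl}_{\Delta_Y}(Y\times Y)$, and you give no argument for this. Limit directions of $D$ do lie in $K$, but you use the reverse inclusion. That inclusion cannot follow from flatness and smoothness alone. In characteristic $p$, the relative Frobenius $Y\to Y^{(p)}$ is finite flat between smooth varieties and injective on points, so $D=\emptyset$. Yet $d\phi=0$, so $K=\bbP(T_Y)$ and no positive-dimensional $Z$ maps immersively. This tangent step is exactly where characteristic $0$ must enter, and your argument never uses it there.

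A correct replacement goes as follows. Stratify $Y$ by the locally closed sets $S_j=\{y:\dim\ker d\phi_y=r+j\}$, and let $W$ be an irreducible component of $S_j$.
\begin{enumerate}
\item The fibres of $\phi|_W$ have dimension $\le r$ by flatness, so $\dim\overline{\phi(W)}\ge\dim W-r$.
\item Generic smoothness of $W\to\overline{\phi(W)}$ gives, at a general $y\in W$,
\[
\dim X-j=\mathrm{rk}\,d\phi_y\ge\mathrm{rk}\,(d\phi_y|_{T_yW})=\dim\overline{\phi(W)}\ge\dim W-r.
\]
Hence $\dim W\le\dim Y-j$.
\item The part of $K$ over $W$ is a $\bbP^{r+j-1}$-bundle, so it has dimension at most $\dim W+r+j-1\le\dim Y+r-1$.
\end{enumerate}
With this bound, your tangent incidence count goes through. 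Very ampleness gives two independent conditions per divisor, namely vanishing at $y$ and vanishing of the differential along $v$. The count then only needs $\dim X+2r-1<2s$, i.e.\ $2d\le\dim X$.
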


The precise meaning of ``in general position'' is explained in detail in \cite[Section 2]{KollarVoisin2024}. The important cases in Koll\'ar and Voisin's paper \cite{KollarVoisin2024} as well as in our setting are general complete intersections of very ample divisors in a smooth projective variety, which are guaranteed to be in general position by \cite[Remark 2.2]{KollarVoisin2024}.

\subsection{Extending the arguments to algebraic cobordism}

Our proof of Theorem \ref{theorem: main theorem in the introduction} follows a strategy similar to the proof of Koll\'ar and Voisin of their Theorem~\ref{KV's theorem: smoothability in the Chow ring}. In particular, their results on cbs resolution (Theorem \ref{KV's theorem: cbs resolution}) and on criteria for smoothability (Theorem \ref{KV's theorem: criteria for smoothability}) play a central role in our proof. 

However, there are some inevitable difficulties to overcome when working with algebraic cobordism instead of the Chow ring. One of them concerns the formal group law in an oriented cohomology theory which governs the behavior of the first Chern classes of line bundles under the tensor product. In the Chow ring, one has 
$c_1(L\otimes M)=c_1(L)+c_1(M)$ for any line bundles $L$ and $M$ over the same base. Meanwhile, in algebraic cobordism, one has instead 
\begin{equation}
\label{eq: c_1(L*M)=F(c_1(L),c_1(M))}
    c_1(L\otimes M)=F(c_1(L),c_1(M)),
\end{equation}
where $F(u,v)\in\Omega^*(k)[[u,v]]$ is a formal power series. In fact, the graded ring $\Omega^*(k)$ together with the formal power series $F(u,v)$ represents the universal formal group law (see Sections \ref{subsection: Chern classes and Segre classes} and \ref{subsection: Algebraic cobordism and its geometric interpretation}). 

One consequence of this additional complexity is that one has to distinguish carefully between the subgroup $\Omega^*(X)_{\fl_*\rmD}$ of $\Omega^*(X)$ constructed from line bundles and the subgroup $\Omega^*(X)_{\fl_*\rmVAD}$ constructed from very ample line bundles (see Definition \ref{definition of D, AD, f_*D, f_*VAD, s_*D}). We introduce the subgroup $\Omega^*(X)_{\fl_*\rmVAD}$ to construct candidates of smoothable cycles in $\Omega^*(X)$, and we establish the following result. 

\begin{prop}[Proposition \ref{proposition: f_*VAD is smoothable within the Whitney range}]
\label{intro proposition: f_*VAD is smoothable within the Whitney range}
    Let $X$ be a smooth projective variety of dimension $n$ over $k$. Then the cycles in $\Omega_d(X)_{\fl_*\rmVAD}$ are smoothable when $2d<n$.
\end{prop}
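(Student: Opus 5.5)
We need to guess the definition of $\Omega^*(X)_{\fl_*\rmVAD}$: it is presumably generated by classes $\phi_*(c_1(L_1)\cdots c_1(L_r)\cdot 1_Y)$, where $\phi:Y\to X$ is proper flat with $Y\in\Sm_k$ and the $L_i$ are very ample line bundles on $Y$, possibly multiplied by coefficients in $\Omega^*(k)$. The plan is to show that each such generator is represented, up to sign and linear combination, by the class $[\tilde Z\to X]$ of a smooth closed subvariety $\tilde Z\subset X$ of dimension $d$.

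\textbf{Step 1 (the class upstairs is a complete intersection).} For very ample $L_i$, take general sections $s_i\in H^0(Y,L_i)$. By Bertini in characteristic $0$, their common zero locus $W\subset Y$ is a smooth complete intersection of codimension $r$. In algebraic cobordism, $c_1(L)\cdot[f:V\to Y]=[V\cap D\to Y]$ whenever $D$ is a smooth divisor in $|L|$ transverse to $f$; this follows from the defining axioms of $\Omega^*$ (Section~\ref{subsection: Algebraic cobordism and its geometric interpretation}). Iterating gives
\begin{equation*}
c_1(L_1)\cdots c_1(L_r)\cdot 1_Y=[W\hookrightarrow Y].
\end{equation*}
This is exactly where very ampleness, rather than arbitrary line bundles, is used. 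For a difference of very ample bundles one would need the formal group law, which is why only the VAD subgroup is treated here. Pushing forward gives $\phi_*[W\to Y]=[\phi|_W:W\to X]$ with $\dim W=d$.

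\textbf{Step 2 (Kollár--Voisin).} Since $W$ is a general complete intersection of very ample divisors in $Y$, it is in general position by \cite[Remark 2.2]{KollarVoisin2024}. Because $\phi$ is flat, $\dim Y\ge\dim X=n>2d=2\dim W$; I need the hypothesis of Theorem~\ref{KV's theorem: criteria for smoothability} in exactly the form stated, namely that the dimension comparison is made with $X$ in the sense of Kollár--Voisin. That theorem then shows $\phi|_W:W\to\phi(W)$ is an isomorphism onto a smooth closed subvariety $Z=\phi(W)\subset X$. Hence
\begin{equation*}
[\phi|_W:W\to X]=[Z\hookrightarrow X],
\end{equation*}
since isomorphic cobordism cycles define the same class.

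\textbf{Step 3 (coefficients and assembly).} It remains to handle generators multiplied by classes $a\in\Omega^*(k)$, if the definition allows them. There are two options, depending on whether the definition of smoothable allows arbitrary integer combinations of classes $[Z\hookrightarrow X]$ or only such combinations. If coefficients are permitted, I would absorb $a=[M\to\pt]$ by replacing $Y$ with $Y\times M$, which is still smooth and flat over $X$, and $L_i$ with $L_i\boxtimes\mathcal{O}_M(1)$ twisted suitably to remain very ample. This reduces to the coefficient-free case. Since $\Omega^*(k)$ is generated by classes of smooth projective varieties, the reduction covers all coefficients, and linearity then concludes.

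I expect the main obstacle to be Step 1 together with checking the general-position hypothesis: verifying that iterated $c_1$ of very ample bundles equals the transverse intersection class in $\Omega^*$, and that general sections make $W$ satisfy the precise genericity required by Theorem~\ref{KV's theorem: criteria for smoothability} relative to $\phi$.
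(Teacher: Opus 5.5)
Your Steps 1 and 2 are exactly the paper's proof. Choose general $D_i\in|L_i|$ cutting out successive smooth hypersurfaces so that $\prod_i c_1(L_i)=[W\hookrightarrow Y]$, then apply \cite[Remark 2.2]{KollarVoisin2024} and \cite[Proposition 2.1]{KollarVoisin2024} to see that $\phi|_W:W\to\phi(W)$ is an isomorphism, so $\phi_*[W\hookrightarrow Y]=[\phi(W)\hookrightarrow X]$ with $\phi(W)$ smooth.

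Step 3 is unnecessary, because $\Omega^*(Y)_{\rmVAD}$ is generated over $\bbZ$ only, with no $\bbL^*$-coefficients. The twisting sketch would also fail as stated: $\prod_i c_1(L_i\boxtimes O_M(1))$ differs from $\prod_i c_1(L_i)\times 1_M$ by formal-group-law correction terms, and removing them is the content of the paper's separate Proposition~\ref{proposition: f_*VAD is a L_*-module}, proved by a double induction with a B\'ezout argument.
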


The proof of this proposition highly relies on the ampleness condition that appears in the definition of $\Omega^*(X)_{\fl_*\rmVAD}$. It guarantees that every element in this subgroup can be represented as the pushforward of general complete intersections, which are in general positions in the sense of \cite[Remark 2.2]{KollarVoisin2024}. Then we finish the proof by applying Theorem \ref{KV's theorem: criteria for smoothability}.

The subgroup $\Omega^*(X)_{\fl_*\rmD}$ is defined in order to obtain a structural result on $\Omega^*(X)$. We prove the following theorem, which is the analog of Theorem \ref{KV's theorem: CH(X)=CH(X)_flCh} of Koll\'ar and Voisin in algebraic cobordism.

\begin{thm}[Theorem \ref{theorem: all classes are f_*D}]
\label{intro theorem: all classes are f_*D}
    Let $X$ be a smooth projective variety over $k$. Then 
    \begin{equation*}
        \Omega^*(X)=\Omega^*(X)_{\fl_*\rmD}. 
    \end{equation*}
\end{thm}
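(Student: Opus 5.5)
Since $\Omega^*(X)$ is generated as a group by classes $[f:Y\ra X]$ with $Y$ smooth projective and $f$ projective, my plan is to show that $\Omega^*(-)_{\fl_*\rmD}$ contains every such generator. Here I take $\Omega^*(X)_{\fl_*\rmD}$ to be generated by classes $\phi_*z$, with $\phi:Y\ra X$ proper flat from $Y\in\Sm_k$ and $z$ a polynomial in first Chern classes of line bundles on $Y$, with coefficients in $\Omega^*(k)$.

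The first step is to establish formal properties of $\Omega^*(-)_{\fl_*\rmD}$. It is stable under flat pullback, by base change of proper flat maps. It is stable under proper flat pushforward, since compositions of proper flat maps are proper flat. It is an $\Omega^*(X)_{\rmD}$-module, by the projection formula. It contains all Chern classes of vector bundles on $X$: using the projective bundle of full flags $\pi:\mathrm{Fl}(E)\ra X$ (smooth, proper, flat) and the splitting principle, $c_i(E)$ is a polynomial in the $c_1$ of tautological line bundles, and $\pi^*$ is injective with $\pi_*$ of an appropriate monomial giving back the class.

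The second step, which is the analogue of Koll\'ar--Voisin's stability result, is stability under pushforward along blow-ups $\tau:\wt X\ra X$ with smooth center $Z$ a complete bundle-section, $Z=s^{-1}(0)$ for $s\in H^0(X,E)$ with $\mathrm{rk}\,E=c$. Following their idea, I would realize $\wt X$ as the zero locus in $\bbP(E)$, with $p:\bbP(E)\ra X$, of the section of $p^*E\otimes\cO(1)$ modulo the tautological line, i.e.\ a section of the quotient bundle $Q$ of rank $c-1$. Then $[\wt X\hra\bbP(E)]=c_{c-1}(Q)$. For $z$ in $\Omega^*(\wt X)_{\fl_*\rmD}$, the task is to express $\iota_*z$ in $\Omega^*(\bbP(E))_{\fl_*\rmD}$ and then push it forward along the flat map $p$. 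When $z=\psi_*w$ with $\psi$ flat over $\wt X$, I need to extend $\psi$ to a flat family over $\bbP(E)$. Koll\'ar and Voisin do this via a deformation/degeneration trick, showing that $\tau_*$ of $\fl_*\Ch$ classes lies in $\fl_*\Ch$, and I expect their geometric construction to transfer verbatim. The reason is that the only input is the identity between pushforward classes and Chern classes of bundles on a flat family, which holds in any oriented theory. I expect this step to be the main obstacle: the formal group law enters whenever excess or normal bundle corrections appear, and one must check that these corrections stay within polynomials in $c_1$ of line bundles. This is where working with $\rmD$ rather than $\rmVAD$ matters, since the corrections involve non-ample bundles.

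The third step is to reduce an arbitrary generator to a complete intersection class. By the generalized degree formula of Levine--Morel, $\Omega^*(X)$ is generated, as an $\Omega^*(k)$-module, by pushforwards $[\wt Z\ra X]$ with $\wt Z\ra Z\subset X$ resolutions of subvarieties. Iterating, it suffices to handle classes $[Z\hra X]$ with $Z$ smooth, after replacing $X$ by $X\times\bbP^N$ so that the embedding is available. I would get these classes in two ways. The first is to embed: choose a closed embedding $Y\hra X\times\bbP^N$ factoring $f$, and use that $\mathrm{pr}_*$ is a flat proper pushforward. The second is to enlarge $N$ until $4\dim Z<\dim(X\times\bbP^N)$, so that Theorem~\ref{KV's theorem: cbs resolution} gives iterated cbs blow-ups $\Pi$ with $Z_{r+1}$ a complete intersection. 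The class $[Z_{r+1}]$ is then a product of $c_1$'s of line bundles, so it lies in $\fl_*\rmD$. By step two, $\Pi_*[Z_{r+1}]\in\fl_*\rmD$. However, Theorem~\ref{KV's theorem: cbs resolution} only gives equality of cycles. In cobordism I need $\Pi_*[Z_{r+1}\ra X_{r+1}]=[Z\ra X]$, which holds because $Z_{r+1}\ra Z$ is birational between smooth varieties. This equality holds up to classes supported in lower dimension, so I would conclude by induction on $\dim Z$, using the Levine--Morel degree formula for the birational correction terms.
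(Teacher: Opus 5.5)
Your Step 3 matches the paper's argument. You reduce standard cycles to closed embeddings $Z\hra X\times\bbP^N$ with $\dim(X\times\bbP^N)>4d$, apply the Koll\'ar--Voisin cbs resolution, and push the complete-intersection class down along $\Pi$. The discrepancy lies in $\bbL_{>0}\cdot\Omega_{<d}(X)$ and is absorbed by induction on $d$ over all smooth projective targets. The paper gets this from $\ker\theta(X)=\Omega_*(X)\times\bbL_{>0}$; you get it from the degree formula. The $\ker\theta(X)$ version is the safer one, because it only needs $\Pi_*[Z_{r+1}]=[Z]$ in $\CH_d(X)$, which is all that survives once $Z_{r+1}$ is moved to a smooth general complete intersection. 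Your first claim, that $\Pi_*[Z_{r+1}\ra X_{r+1}]=[Z\ra X]$ because $Z_{r+1}\ra Z$ is birational, is false in $\Omega_*$: blowing up a point of a surface changes its Chern numbers. The correction you then make is the right one.

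The genuine gap is Step 2. Essentially all the work is there, and you only assert that Koll\'ar--Voisin's construction should ``transfer verbatim''. It does not. Their chain (smooth hypersurfaces, then cbs embeddings by induction on codimension via the diagram $X'\subset\bbP(E)$, then $\tau=p\circ j$) uses $p_*(t^{r-1})=1_X$ for a projective bundle, i.e. $s_0=1$, and this fails in $\Omega^*$. The mechanism you describe, extending a flat $\psi$ over $\wt X$ to a flat family over $\bbP(E)$, is not what is done, and it is unclear how it would be achieved. The hypersurface case $j:Y\hra X$ is instead handled by writing $j_*\alpha=pr_{X*}(pr_Y^*\alpha\cdot i_*1_{\Gamma_j})$. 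One then blows up $Y\times X$ along the graph $\Gamma_j$ to get $T$, for which $T\ra Y$ is smooth and $T\ra X$ is flat.

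Concretely, you are missing the following.

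\emph{A cobordism substitute for $p_*(t^{r-1})=1_X$.} The paper proves $1_X=c_r(E^\vee)s_{-r}(E^\vee)+\sum_{s\geq1}Q_s(E^\vee)\,p_*(t^{s-1})$. It writes $[\bbP(\cO_X)\hra\bbP(E\oplus\cO_X)]=c_r(\cO(1)'\otimes q^*E^\vee)$, expands with the formal group law, and pushes forward. In the hypersurface case the first term vanishes for degree reasons.

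\emph{The extra term in the cbs step.} In the cbs-embedding step the term $c_r(E^\vee)s_{-r}(E^\vee)\cdot j_*\alpha$ survives. It has no Chow analogue and is not reached by induction on codimension. The paper writes $s_{-r}(E^\vee)$ as a polynomial in Chern classes with coefficients in $\bbL_{>0}$. This makes the term an $\bbL_{>0}$-combination of $j_*$ of classes of strictly smaller dimension, which a second induction on $d$ handles.

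\emph{A ring of Chern classes acting on $\Omega^*(X)_{\fl_*\rmD}$.} To multiply by the $Q_s$, you need $\Omega^*(X)_{\fl_*\rmD}$ to be a module over a ring containing all Chern classes; the paper uses $\Omega^*(X)_{\rmsm_*\rmD}$. Your Step 1 only places Chern classes in the group. Its argument also relies on a Chow identity: $\pi_*$ of the relevant monomial on the flag bundle is in general $1_X+u$ with $u\in\ker\theta(X)$, not $1_X$. This is repairable, since $u$ is nilpotent and $1_X+u$ can be inverted inside $\Omega^*(X)_{\rmsm_*\rmD}$; the paper instead compares Chern and Segre classes on Grassmannians.

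\emph{Flat pullback.} ``Stable under flat pullback'' is not justified. The fibre product of two flat morphisms of smooth varieties need not be smooth, so transverse base change is unavailable. Only smooth pullback is needed.
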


The first step to prove this theorem is the following result.

\begin{prop}[Proposition \ref{proposition: f_*D is preserved under pushforward along cbs blow-up}]
\label{intro proposition: f_*D is preserved under pushforward along cbs blow-up}
    Let $X$ be a smooth projective variety over $k$, and let $Z\subset X$ be a smooth complete bundle-section in $X$. Let $\tau:\wt{X}\ra X$ be the blow-up of $X$ along $Z$. Then 
    \begin{equation*}
        \tau_*(\Omega^*(\wt{X})_{\fl_*\rmD})\subseteq\Omega^*(X)_{\fl_*\rmD}.
    \end{equation*}
\end{prop}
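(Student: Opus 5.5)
We need to show that $\tau_*$ of a generator of $\Omega^*(\wt{X})_{\fl_*\rmD}$ lies in $\Omega^*(X)_{\fl_*\rmD}$. We take the definition (Definition \ref{definition of D, AD, f_*D, f_*VAD, s_*D}) to mean that the generators have the form $\phi_*z$. Here $\phi:Y\ra\wt{X}$ is proper and flat with $Y\in\Sm_k$, and $z$ is a product of first Chern classes of line bundles on $Y$, with coefficients in $\Omega^*(k)$. The composite $\tau\circ\phi$ is proper but not flat over $Z$, so the plan is to rewrite $\tau_*\phi_*z$ using flat maps to $X$, following Koll\'ar--Voisin's argument for $\CH^*_{\fl_*\Ch}$.

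\textbf{Step 1: a flat resolution of the blow-up.} Since $Z$ is a cbs, it is the zero locus of a section $s$ of a rank $c$ bundle $E$ on $X$. Then $\wt{X}$ embeds in $P:=\bbP(E)$ as the zero locus of the induced section of $\pi^*E/\cO(-1)$, a bundle of rank $c-1$. The projection $\pi:P\ra X$ is smooth, proper and flat, and $\tau=\pi\circ i$, where $i:\wt{X}\hra P$ is the regular embedding. Since $\wt{X}$ is the zero locus of a regular section,
\begin{equation*}
  i_*i^*(w)=c_{c-1}(\pi^*E/\cO(-1))\cdot w \quad\text{for } w\in\Omega^*(P).
\end{equation*}

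\textbf{Step 2: reduce to classes pulled back from $P$.} We show that every generator $\phi_*z$ on $\wt{X}$ can be written as $i^*$ of a class of the form $\psi_*z'$, where $\psi:Y'\ra P$ is flat and $z'$ is a product of divisor classes. Then
\begin{equation*}
  \tau_*\phi_*z=\pi_*i_*i^*(\psi_*z')=\pi_*\bigl(c_{c-1}(\pi^*E/\cO(-1))\cdot\psi_*z'\bigr)=(\pi\psi)_*\bigl(\psi^*c_{c-1}(\pi^*E/\cO(-1))\cdot z'\bigr).
\end{equation*}
Here $\pi\psi$ is flat and proper. The top Chern class can be expressed through line bundles by the splitting principle: pass to the flag bundle $\rho:\mathrm{Fl}\ra Y'$, which is smooth and proper, so $\pi\psi\rho$ is still flat. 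On $\mathrm{Fl}$ the Chern class becomes a product of $c_1$ of line bundles. Pulling back $z'$ and multiplying by a class $\xi$ with $\rho_*(\xi)=1$, itself a monomial in the $c_1$'s of tautological line subquotients, gives a class in $\Omega^*(X)_{\fl_*\rmD}$.

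\textbf{Step 3 (the main obstacle): lifting generators from $\wt{X}$ to $P$.} A flat $\phi:Y\ra\wt{X}$ need not extend flatly over $P$. The approach I would try is the one Koll\'ar--Voisin use.
\begin{enumerate}
\item Use the projective bundle formula for $\Omega^*(P)$ together with the blow-up formula for $\Omega^*(\wt{X})$. These show that $\Omega^*(\wt{X})$ is generated, as a module over $\Omega^*(X)$ via $\tau^*$, by powers of $c_1(\cO(-1))$ and by classes $j_*(\text{stuff on } E_Z=\bbP(N_Z))$.
\item The pullback $\tau^*$ of a flat generator on $X$ stays in $\fl_*\rmD$. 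Indeed, the base change $Y\times_X\wt{X}\ra\wt{X}$ is flat, and since $Y\ra X$ is flat, $Y\times_X\wt{X}$ is the blow-up of $Y$ along the cbs $\phi^{-1}Z$, hence smooth.
\item The pushforward $\tau_*$ of $\tau^*(a)\cdot c_1(\cO(-1))^m$ equals $a\cdot\tau_*(c_1(\cO(-1))^m)$ by the projection formula. The second factor is computed via Step 1, since $\cO(-1)$ is restricted from $P$.
\end{enumerate}
The delicate point is the formal group law. Products of $c_1$'s of tensor products must be re-expanded via $F$, as in \eqref{eq: c_1(L*M)=F(c_1(L),c_1(M))}, into power series in the $c_1$'s with $\Omega^*(k)$-coefficients. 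This stays inside $\rmD$ because $\rmD$ is, by definition, an $\Omega^*(k)$-subalgebra generated by divisor classes, and the series are finite in each degree. I expect this module-generation argument to be the main work. In it I would also have to handle the exceptional-divisor terms through the flat projective bundle $E_Z\ra Z$, where $Z$ is itself a cbs, pushing them forward via the regular embedding $Z\hra X$ using the same Koszul-type identity as in Step 1.
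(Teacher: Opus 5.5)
\textbf{There is a genuine gap in Step 3.} Your Step 1 is exactly the paper's reduction. The blow-up $\wt{X}$ is the zero-locus of a transverse section of $\pi^*E/\cO(-1)$ on $P$, so $\tau=\pi\circ i$ with $\pi$ flat and $i$ the inclusion of a smooth complete bundle-section. Everything therefore comes down to showing that $i_*$ sends $\Omega^*(\wt{X})_{\fl_*\rmD}$ into $\Omega^*(P)_{\fl_*\rmD}$. That is precisely the step your proposal does not supply, and the two substitutes you offer do not work.

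First, the identity $i_*i^*w=c_{\mathrm{top}}\cdot w$ only treats classes restricted from $P$, and $i^*$ is not surjective, even after applying $\theta$. Over $k=\bbC$, take $X=\bbP^3$ and $Z$ a smooth complete intersection curve of positive genus. Then $\CH_1(\wt{X})\cong\CH_1(X)\oplus\CH_0(Z)$ is uncountable, whereas $\CH^*(P)$ is free of finite rank. Every class on $\wt{X}$ turns out to be $\fl_*\rmD$ (Theorem~\ref{theorem: all classes are f_*D}), and $\theta$ is surjective and commutes with pullbacks. So the lifting of all generators to $P$ asserted in Step 2 is false in general.

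Second, the repair in Step 3 is circular. The blow-up and projective bundle formulas decompose a class on $\wt{X}$ with coefficients that are \emph{arbitrary} classes on $X$ and on the exceptional divisor. Nothing ensures that these coefficients lie in $\fl_*\rmD$: that is essentially the statement to be proved. If you knew $\Omega^*(X)=\Omega^*(X)_{\fl_*\rmD}$ the proposition would be vacuous, and the paper deduces that equality \emph{from} this proposition. The exceptional-divisor terms must also be pushed forward along $Z\hra X$. This is again a cbs pushforward of classes that are not restricted from $X$, so it is the same unsolved problem. Two smaller points: $Y\times_X\wt{X}$ is the blow-up of $Y$ along $\phi^{-1}(Z)$, which can be singular, so this fibre product need not be smooth. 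And this module-generation scheme is not what Koll\'ar--Voisin do.

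\textbf{The missing idea is the graph trick, and the paper carries it out as follows.}
\begin{enumerate}
\item For a smooth hypersurface $j:Y\hra X$, with $i:\Gamma_j\hra Y\times X$ the graph, write $j_*\alpha=pr_{X*}(pr_Y^*\alpha\cdot i_*1_{\Gamma_j})$. Blow up $Y\times X$ along $\Gamma_j$. On $T=Bl_{\Gamma_j}(Y\times X)$, the projection $p:T\ra Y$ is smooth and the projection $q:T\ra X$ is flat by \cite[Lemma 3.8]{KollarVoisin2024}.
\item Apply Proposition~\ref{proposition: pushforward from projective bundle} to the normal bundle $N\cong(pr_X\circ i)^*TX$, whose top Chern class vanishes for dimension reasons. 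With $\mathcal{E}\subset T$ the exceptional divisor, this gives $j_*\alpha=\sum_{s\geq1}Q_s(TX)\cdot q_*\bigl(p^*\alpha\cdot[-\mathcal{E}]_\Omega^{s-1}\cdot[\mathcal{E}]_\Omega\bigr)$.
\item This class lies in $\fl_*\rmD$ because Chern classes lie in $\Omega^*(X)_{\sm_*\rmD}$ (Proposition~\ref{proposition: Chern classes belong to s_*D}). That result also replaces your flag-bundle step, where in cobordism the pushforward of a monomial equals $1$ only up to a nilpotent correction.
\item Codimension $r$ is reduced to a hypersurface followed by a codimension $r-1$ cbs, via diagram~\eqref{diagram: cbs diagram} and the identity $1_X=c_r(E^\vee)s_{-r}(E^\vee)+\sum_{s\geq1}Q_s(E^\vee)\,p_*(t^{s-1})$.
\item The extra term $c_r(E^\vee)s_{-r}(E^\vee)\cdot j_*\alpha$ has no Chow-ring analogue. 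It is absorbed by induction on the degree, because $s_{-r}(E^\vee)$ is a combination of Chern monomials with coefficients in $\bbL_{>0}$.
\end{enumerate}
Without this, or another proof that pushforwards along smooth cbs embeddings preserve $\fl_*\rmD$, your outline does not reach the statement.
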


Proposition \ref{intro proposition: f_*D is preserved under pushforward along cbs blow-up} is the analog of \cite[Proposition 3.11]{KollarVoisin2024} in algebraic cobordism, and the proof follows a similar strategy. We first prove in Proposition \ref{proposition: f_*D is preserved under pushforward of inclusion of smooth hypersurfaces} that $\Omega^*(-)_{\fl_*\rmD}$ is preserved by pushforwards along embeddings of smooth hypersurfaces, which corresponds to \cite[Proposition~3.7]{KollarVoisin2024}. We then use an induction argument to generalize this result to embeddings of smooth cbs and obtain Proposition \ref{proposition: f_*D is preserved under pushforward of inclusion of smooth complete bundle-sections}, which corresponds to \cite[Proposition 3.9]{KollarVoisin2024}. Finally, Proposition \ref{intro proposition: f_*D is preserved under pushforward along cbs blow-up} becomes a direct corollary of Proposition \ref{proposition: f_*D is preserved under pushforward of inclusion of smooth complete bundle-sections} via diagram (\ref{diagram: cbs blow-up}). 

Although the strategies we use to prove Proposition \ref{proposition: f_*D is preserved under pushforward of inclusion of smooth hypersurfaces} and Proposition \ref{proposition: f_*D is preserved under pushforward of inclusion of smooth complete bundle-sections} are similar to those in the Chow ring (see \cite[Section~3.3]{KollarVoisin2024}), additional work is required to extend the argument to algebraic cobordism. For example, in the proof of Proposition \ref{proposition: f_*D is preserved under pushforward of inclusion of smooth hypersurfaces}, the argument in the Chow ring relies on the identity $s_0(E)=1_X$, which no longer holds in algebraic cobordism. Instead, we replace it with equation (\ref{eq: 1_X=c_r(E)s_-r(E)+sum over s with t}), which is established in Proposition \ref{proposition: pushforward from projective bundle}. The same equation is also used in the proof of Proposition \ref{proposition: f_*D is preserved under pushforward of inclusion of smooth complete bundle-sections} to adapt the argument from the Chow ring to algebraic cobordism. For the same purpose, we prove that all Chern classes lie in $\Omega^*(-)_{\sm_*\rmD}$, as stated in Proposition~\ref{proposition: Chern classes belong to s_*D}. In the Chow ring, the analog of this result follows directly from the relation~$c(E)s(E)=1_X$. Since this relation no longer holds in algebraic cobordism, we instead establish Proposition \ref{proposition: L_*-subalgebras generated by Chern classes and Segre classes are the same}, which allows us to deduce Proposition \ref{proposition: Chern classes belong to s_*D} and eventually Proposition \ref{intro proposition: f_*D is preserved under pushforward along cbs blow-up}.

The proof of Theorem \ref{intro theorem: all classes are f_*D} then follows from Proposition \ref{intro proposition: f_*D is preserved under pushforward along cbs blow-up} and the ``cbs resolution theorem'' of Koll\'ar and Voisin (see \cite[Theorem 1.9]{KollarVoisin2024}), by an induction argument on the degree $d$ of the cobordism group~$\Omega_d(X)$ and a comparison result between the Chow ring and algebraic cobordism (see Lemma~\ref{lemma: kernel of theta(X)}).

With Proposition \ref{intro proposition: f_*VAD is smoothable within the Whitney range} and Theorem \ref{intro theorem: all classes are f_*D} at hand, the proof of Theorem \ref{theorem: main theorem in the introduction} reduces to showing that the subgroups $\Omega^*(X)_{\fl_*\rmD}$ and $\Omega^*(X)_{\fl_*\rmVAD}$ of $\Omega^*(X)$ coincide.

\begin{thm}[Theorem \ref{theorem: all classes are fl_*VAD}]
\label{intro theorem: fl_*D=f_*VAD}
    Let $X$ be a smooth projective variety over $k$. Then 
        \begin{equation*}
        \label{intro eq: fl_*D=fl_*VAD}
            \Omega^*(X)_{\fl_*\rmD}=\Omega^*(X)_{\fl_*\rmVAD}.
        \end{equation*}
\end{thm}

The corresponding result in the Chow ring is immediate since for any line bundle $L$, there always exist very ample line bundles $L'$ and $L''$ such that $L\cong L'\otimes L''^\vee$, and hence $c_1(L)=c_1(L')-c_1(L'')$ in the Chow ring. However, in algebraic cobordism, from (\ref{eq: c_1(L*M)=F(c_1(L),c_1(M))}), we can only express $c_1(L)$ as a polynomial in $c_1(L')$ and $c_1(L'')$ with $\bbL^*$ coefficients.
 
Therefore, to prove Theorem \ref{intro theorem: fl_*D=f_*VAD}, it suffices to prove that $\Omega^*(X)_{\fl_*\rmVAD}$ is an $\bbL^*$-submodule of~$\Omega^*(X)$, which is the content of Proposition \ref{proposition: f_*VAD is a L_*-module}. To prove this result, we proceed by induction on both the degree of the coefficients in $\bbL_*$ and the degree of the cycles in $\Omega_*(X)_{\fl_*\rmVAD}$.

As we explained above, one of the main difficulties in generalizing the result of Koll\'ar and Voisin from the Chow ring to algebraic cobordism arises from the complexity of the coefficient ring $\Omega^*(k)$. Unlike~$\CH^*(k)\cong\bbZ$, which is concentrated in degree $0$, the ring $\Omega^*(k)$ is isomorphic to the Lazard ring $\bbL^*$, which classifies all formal group laws. In particular, $\Omega^*(k)$ is isomorphic to a polynomial ring with integer coefficients in countably many variables (see Section \ref{subsection: Chern classes and Segre classes}). The projection to the degree $0$ part of $\bbL^*$ defines a morphism $\bbL^*\ra\bbL^0\cong\bbZ$, which induces an isomorphism 
\begin{equation*}
    \CH^*(X)\cong\Omega^*(X)\otimes_{\bbL^*}\bbZ
\end{equation*}
for any $X\in\Sm_k$ (see Section \ref{subsection: Algebraic cobordism and Chow groups}). Therefore, if two elements $\alpha,\beta\in\Omega_d(X)$ correspond to the same class in $\CH_d(X)$, their difference $\alpha-\beta$ can be highly nontrivial in $\Omega_d(X)$. Nevertheless, it can be written as a linear combination of elements of the form $\lambda\times\gamma$, where  $\lambda\in\bbL_{>0}$ and~$\gamma\in\Omega_{<d}(X)$ (see Lemma~\ref{lemma: kernel of theta(X)}). Since the degree of $\gamma$ is strictly less than that of $\alpha$ and $\beta$, this allows us to apply an induction argument on the degree of classes in $\Omega_*(X)$. This strategy is used in the proofs of Lemma \ref{lemma: generating Chow ring implies generating Omega} and Theorem~\ref{theorem: all classes are f_*D}.

\subsection{Organization of the article}

Section \ref{section: Generalities on algebraic cobordism} serves as a brief introduction to algebraic cobordism. We first introduce the notion of oriented cohomology theories on $\Sm_k$, then define algebraic cobordism to be the universal one, and denote it by $\Omega^*$. We also introduce the Chern classes and Segre classes in $\Omega^*$ analogous to their counterparts in~$\CH^*$. We present a geometric method for constructing cycles in $\Omega^*$, which allows us to compute certain pullbacks and pushforwards explicitly. We end this section with a comparison result between~$\Omega^*(X)$ and $\CH^*(X)$ (see Lemma \ref{lemma: kernel of theta(X)}).

We begin Section \ref{section: Pushforwards of products of line bundles} by defining several subgroups and subrings of $\Omega^*(X)$. We then study their properties, especially their stability under certain pullbacks and pushforwards. In particular, we prove in Proposition \ref{proposition: f_*D is preserved under pushforward along cbs blow-up} that $\Omega^*(X)_{\fl_*\rmD}$ is stable under pushforwards by blow-ups along smooth complete bundle-sections, which is the analog of Koll\'ar and Voisin's result \cite[Proposition~3.11]{KollarVoisin2024} in our context. The proof of this result relies on a relation between Chern classes and Segre classes in algebraic cobordism, which is established in Proposition \ref{proposition: pushforward from projective bundle}. 

In Section \ref{section: Koll\'ar--Voisin cbs resolution and smoothability of cycles in Omega^{*}}, we first prove a structural theorem for $\Omega^*(X)$ in Theorem \ref{theorem: all classes are f_*D}, which asserts that~$\Omega^*(X)=\Omega^*(X)_{\fl_*\rmD}$. We then prove in Proposition \ref{proposition: f_*VAD is smoothable within the Whitney range} that classes in $\Omega_d(X)_{\fl_*\rmVAD}$ are smoothable when~$2d<\dim(X)$. Finally, we deduce our main result, Theorem \ref{main theorem}, by showing that~$\Omega^*(X)_{\fl_*\rmD}=\Omega^*(X)_{\fl_*\rmVAD}$, which is established in Theorem \ref{theorem: all classes are fl_*VAD}. This equality follows from the fact that the subgroup $\Omega^*(X)_{\fl_*\rmVAD}$ of $\Omega^*(X)$ is also a $\bbL^*$-submodule (see Proposition~\ref{proposition: f_*VAD is a L_*-module}).

I would like to thank my advisor, Olivier Benoist, for many enlightening discussions and for his careful reading of preliminary versions of this text.

\section{Generalities on algebraic cobordism}
\label{section: Generalities on algebraic cobordism}

\subsection{Oriented cohomology theories}
\label{subsection: oriented cohomology theories}

Here, we follow the notation of Levine and Morel in \cite{LevineMorel2007}, and denote by $\Sm_k$ the category of smooth quasi-projective complex varieties over $k$, and by~$\R^*$ the category of commutative $\bbZ$-graded rings with a unit.

\begin{Def}
\label{definition of an oriented cohomology theory over Sm_C}
    An oriented cohomology theory consists of the following data:
    \begin{enumerate}[font=\bfseries]
        \item[(D1).] An additive functor $A^*:\Sm_k^{\op}\ra \R^*$.

        \item[(D2).] For each projective morphism $f:Y\ra X$ in $\Sm_k$ of relative dimension $d$, a homomorphism of graded $A^*(X)$-modules:
        \begin{equation*}
            f_*:A^*(Y)\ra A^{*-d}(X).
        \end{equation*}
    \end{enumerate}
    These data are required to satisfy the following properties:
    \begin{enumerate}[align=left, font=\bfseries, leftmargin=*]
        \item[(A1).] For any $X\in\Sm_k$, we have 
        \begin{equation*}
            (\Id_X)_*=\Id_{A^*(X)}.
        \end{equation*}
        Let $f:Y\ra X$ and $g:Z\ra Y$ be projective morphisms in $\Sm_k$ of relative dimensions $d$ and~$e$, respectively. Then we have
        \begin{equation*}
            (f\circ g)_* = f_* \circ g_*\colon A^*(Z)\ra A^{*-d-e}(X).
        \end{equation*}

        \item[(A2).(Transverse commutativity).] Let $f:X\ra Z$ and $g:Y \ra Z$ be two transverse morphisms in $\Sm_k$ in the sense of \cite[Definition 1.1.1.]{LevineMorel2007}. Suppose that $f$ is projective of relative dimension $d$. Then, given a cartesian square 
        \begin{equation*}
            \begin{tikzcd}
                W \ar[r,"g'"] \ar[dr,phantom,"\square"] \ar[d,"f'"'] & X \ar[d,"f"]\\
                Y \ar[r,"g"'] & Z
            \end{tikzcd}
        \end{equation*}
        we have $g^*\circ f_*= f'_*\circ g'^*:A^*(X)\ra A^{*-d}(Y)$.

        \item[(A3).(Projective bundle formula).] Let $X\in\Sm_k$ and $E\ra X$ be a vector bundle over $X$ of rank $r$. Let $\bbP(E)\ra X$ be the projectivization of $E$, and $O(1)\ra\bbP(E)$ be the tautological quotient line bundle. Denote by $s:\bbP(E)\ra O(1)$ the zero section. Define $\zeta\in A^1(\bbP(E))$ by~$\zeta\coloneqq s^*s_*(1)$. Then $A^*(\bbP(E))$ is a free $A^*(X)$-module, with basis $\{1,\zeta,\cdots,\zeta^{r-1}\}$.
        
        \item[(A4).(Extended homotopy property).] Let $X\in\Sm_k$ and $E\ra X$ be a vector bundle over $X$. Let $p:V\ra X$ be an $E$-torsor. Then $p^*:A^*(X)\ra A^*(V)$ is an isomorphism. 
    \end{enumerate}
\end{Def}

\begin{rem}
    Here and throughout, we use Grothendieck's convention: for a vector bundle $E\ra X$, the projectivization $p:\bbP(E)\ra X$ is the projective bundle of one-dimensional quotients of $E$, and~$O(1)\ra\bbP(E)$ denotes the tautological quotient line bundle.
\end{rem}

The second piece of data \tbf{(D2)} implies that the projection formula holds for any projective morphism~$f:Y\ra X$ in $\Sm_k$, $i.e.$, we have
\begin{equation*}
    f_*(f^*\alpha\cdot\beta)=\alpha\cdot f_*\beta,
\end{equation*}
for any $\alpha\in A^*(X)$ and $\beta\in A^*(Y)$.

For any $X\in\Sm_k$, we denote by $1_X\in A^0(X)$ the multiplicative unit of the ring $A^*(X)$. 

We will use the notation $A_*(X)$ for $A^{\dim(X)-*}(X)$. Meanwhile, for a graded ring $R^*$, we will write $R_*$ for $R^{-*}$.

The external product in an oriented cohomology theory $A^*$ is defined as
\begin{align*}
    \times:A^*(X)\times A^*(Y)&\ra A^*(X\times Y)\\
    (\alpha,\beta)&\mapsto \alpha\times\beta\coloneqq pr_X^*\alpha\cdot pr_Y^*\beta
\end{align*}
for any $X,Y\in\Sm_k$, where $pr_X$ and $pr_Y$ are the projections from $X\times Y$ to $X$ and $Y$. In particular, for $\beta=1_Y$, we have $\alpha\times 1_Y=pr_X^*\alpha$. It is straightforward to check that the external product defined in this way is compatible with pushforwards and pullbacks. More precisely, we have 
\begin{equation*}
    (f\times g)_*\circ\times =\times\circ(f_*\times g_*)
\end{equation*}
for any projective morphisms $f,g$ in $\Sm_k$, and 
\begin{equation*}
    (f\times g)^*\circ\times =\times\circ(f^*\times g^*)
\end{equation*}
for any morphisms $f,g$ in $\Sm_k$.

As in the Chow ring, one can also consider correspondences in algebraic cobordism, and we have the following result.

\begin{prop}
\label{proposition: formula of correspondence}
    Let $\phi:Y\ra X$ be a morphism between smooth projective varieties. Denote by~$i:\Gamma_\phi \hra Y\times X$ the embedding of the graph of $\phi$. Then for any class $\alpha\in\Omega^*(Y)$, we have 
    \begin{equation*}
        \phi_*\alpha = pr_{X*}(pr_Y^*\alpha\cdot i_*(1_{\Gamma_\phi})) \text{\ in\ } \Omega^*(X),
    \end{equation*}
    where $pr_Y$ and $pr_X$ are the projections from $Y\times X$ to $Y$ and $X$ respectively.
\end{prop}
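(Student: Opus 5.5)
The plan is to factor $\phi$ through its graph. Write $\phi = pr_X\circ i\circ \gamma$, where $\gamma: Y\xra{\sim}\Gamma_\phi$ is the isomorphism $y\mapsto (y,\phi(y))$. Equivalently, the composite $j\coloneqq i\circ\gamma: Y\hra Y\times X$ is the graph embedding $(\Id_Y,\phi)$. Since $Y$ and $X$ are smooth projective, $Y\times X$ is smooth projective. Hence $j$ is a projective morphism (a closed embedding), and so is $pr_X$. By functoriality of pushforwards (A1) we then get $\phi_* = pr_{X*}\circ j_*$.

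The key step is to rewrite $j_*\alpha$ using the projection formula for $j$. We have $pr_Y\circ j=\Id_Y$, so $j^*pr_Y^*\alpha=\alpha$. The projection formula (a consequence of (D2), applied to the $\Omega^*(Y\times X)$-module structure on $\Omega^*(Y)$ via $j^*$) then gives
\begin{equation*}
    j_*\alpha = j_*(j^*pr_Y^*\alpha\cdot 1_Y) = pr_Y^*\alpha\cdot j_*(1_Y).
\end{equation*}
Next, $j_*(1_Y)=i_*\gamma_*(1_Y)=i_*(1_{\Gamma_\phi})$. This holds because $\gamma$ is an isomorphism, so $\gamma_*=(\gamma^{-1})^*$ (both are inverse to $\gamma^*$ by (A1) together with the compatibility of isomorphisms), and pullbacks are ring maps, hence send units to units. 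Combining these,
\begin{equation*}
    \phi_*\alpha = pr_{X*}(j_*\alpha) = pr_{X*}\bigl(pr_Y^*\alpha\cdot i_*(1_{\Gamma_\phi})\bigr).
\end{equation*}

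The only point needing care is the identification $\gamma_*=(\gamma^{-1})^*$ for an isomorphism $\gamma$. I would check it with transverse commutativity (A2): the square with $\gamma$ on top, $\Id$'s on the sides and $\gamma$ at the bottom is cartesian and transverse. Alternatively, the issue can be avoided altogether by regarding $\Gamma_\phi$ as $Y$ with the embedding $j$, which is how the statement is meant. No real obstacle is expected; the statement is essentially the projection formula combined with functoriality of pushforward.
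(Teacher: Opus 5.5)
Your argument is correct and is essentially the paper's. Both proofs apply the projection formula to the graph embedding and then use functoriality of pushforward; the paper applies it to $i$ and then uses $p_*p^*=\Id$ for the isomorphism $p=pr_Y\circ i$, which is your $\gamma^{-1}$, so it rests on exactly the fact you isolate.

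One small fix concerns that fact. The square you propose, with $\gamma$ on top and bottom and identities on the sides, yields only a tautology under (A2). The useful cartesian square has $\Id_Y$ on top and left and $\gamma$ on the right and bottom. Applying (A2) to it gives $\gamma^*\gamma_*=\Id$, and hence $\gamma_*=(\gamma^{-1})^*$.
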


\begin{proof}
    Define $p\coloneqq pr_Y\circ i:\Gamma_\phi\ra Y$ and $q\coloneqq pr_X\circ i:\Gamma_\phi\ra X$. Then $p$ is an isomorphism and~$q=\phi\circ p$. By the projection formula, we have 
    \begin{equation*}
        pr_{X*}(pr_Y^*\alpha\cdot i_*(1_{\Gamma_\phi}))= (pr_X\circ i)_*((pr_Y\circ i)^*\alpha)=q_*p^*\alpha=\phi_*p_*p^*\alpha=\phi_*\alpha. \qedhere
    \end{equation*}
\end{proof}

\begin{Def}
\label{definition: morphisms of oriented cohomology theories}
    A morphism of oriented cohomology theories is a natural transformation of functors $\Sm_k^{\op} \ra \R^*$ which commutes with the projective pushforwards.
\end{Def}

\subsection{Chern classes and Segre classes}
\label{subsection: Chern classes and Segre classes}

Given an oriented cohomology theory $A^*$, one can use Grothendieck's method in \cite{Grothendieck1958} to define Chern classes $c_i^A(E)\in A^i(X)$ of a vector bundle $E\ra X$ over $X\in \Sm_{k}$. Suppose that $E$ is of rank $r$. Then, by the projective bundle formula, there exist unique classes $c^A_i(E)\in A^i(X),0\leq i\leq r$, such that $c^A_0(E)=1$ and that
\begin{equation*}
    \sum_{i=0}^r (-1)^i c^A_i(E) \cdot \zeta^{r-i}=0 \text{\ in\ } A^r(\bbP(E)),
\end{equation*}
where $\bbP(E)\ra X$ is the projectivization of $E$ and $\zeta$ is defined as in the projective bundle formula (see Definition \ref{definition of an oriented cohomology theory over Sm_C}). We define $c^A(E) \coloneqq \sum_{i=0}^r c^A_i(E)\in A^*(X)$ to be the total Chern class of $E$. When the cohomology theory $A^*$ is clear from the context, we drop the superscript and simply write~$c_i(E)$ instead.

One can check that the Chern classes defined above satisfy the standard properties. We collect some of them in the following proposition. For a detailed discussion, we refer to the book by Levine and Morel \cite[Section 1.1, Section 4.1.7]{LevineMorel2007}.

\begin{prop}
    Let $A^*$ be an oriented cohomology theory, and $c_i(-)$ be the Chern classes in $A^*$. Then they satisfy the following properties.
    \begin{enumerate}[itemsep=0em]
        \item Let $L\ra X$ be a line bundle over $X\in\Sm_k$, and let $s:X\hra L$ be the zero section. Then 
        \begin{equation*}
            c_1(L)=s^*s_*(1_X).
        \end{equation*}

        \item For any morphism $f:Y \ra X$ in $\Sm_k$, and any vector bundle $E\ra X$ over $X$, we have 
        \begin{equation*}
            c(f^*E)=f^*c(E).
        \end{equation*}

        \item If $0 \ra E' \ra E \ra E''\ra 0$ is a short exact sequence of vector bundles over $X\in\Sm_k$, we have 
        \begin{equation*}
            c(E)=c(E')\cdot c(E'').
        \end{equation*}
    \end{enumerate}
\end{prop}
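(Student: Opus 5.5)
The three properties are those of \cite[Section 1.1, Section 4.1.7]{LevineMorel2007}. I would check (1) and (2) directly from the definition by the projective bundle formula, and derive (3) from the splitting principle.

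\textbf{Property (1).} For a line bundle $L$ of rank $1$, the projectivization $\bbP(L)\ra X$ is an isomorphism. Under it the tautological quotient $O(1)$ is identified with $L$ itself. The defining relation $\sum_{i=0}^1(-1)^ic_i(L)\zeta^{1-i}=0$ then reads $\zeta-c_1(L)=0$. Hence $c_1(L)=\zeta=s^*s_*(1_X)$, with $s$ the zero section of $L$.

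\textbf{Property (2).} Let $f:Y\ra X$ and $E\ra X$ of rank $r$. Then $\bbP(f^*E)=Y\times_X\bbP(E)$, and $O(1)$ on $\bbP(f^*E)$ is the pullback along $f':\bbP(f^*E)\ra\bbP(E)$ of $O(1)$ on $\bbP(E)$.
\begin{itemize}
\item I first show $f'^*\zeta_E=\zeta_{f^*E}$. The square formed by the zero sections and the induced map of total spaces $f'^*O(1)\ra O(1)$ is cartesian. It is also transverse, since a zero section is a regular embedding of codimension $1$ and its pullback has the expected codimension. So (A2) gives $f''^*s_*(1)=s'_*(1)$, and pulling back along $s'$ yields the claim.
\item Pulling back the defining relation of $c_i(E)$ along $f'$ gives $\sum(-1)^if^*c_i(E)\,\zeta_{f^*E}^{r-i}=0$. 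Since $\{1,\dots,\zeta^{r-1}\}$ is a basis by (A3), the classes are unique, so $c_i(f^*E)=f^*c_i(E)$.
\end{itemize}

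\textbf{Property (3).} I would prove the Whitney formula by the splitting principle.
\begin{itemize}
\item Iterating (A3), the pullback to the complete flag bundle of $E'$ and then of $E''$ is injective. By (2), this reduces to the case where $E'$ and $E''$ admit filtrations with line bundle quotients. Lifting these, $E$ admits a filtration with quotients $L_1,\dots,L_r$.
\item It then suffices to show $c(E)=\prod_i(1+c_1(L_i))$ for any $E$ with such a filtration, and apply this to $E$, $E'$ and $E''$.
\item By uniqueness in (A3), this amounts to the identity $\prod_{i=1}^r(\zeta-c_1(L_i))=0$ in $A^*(\bbP(E))$; the sign convention $(-1)^i$ in the definition is consistent with this form.
\end{itemize}

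\textbf{The main obstacle} is this last identity, because in a general theory $c_1(O(1)\otimes L_i^\vee)\neq\zeta-c_1(L_i)$. I would follow Levine--Morel and argue in two steps.
\begin{itemize}
\item First, the composite $L_i\hra E\thra O(1)$ gives sections of $O(1)\otimes L_i^\vee$ with empty common zero locus. Using the vanishing of products of $c_1$'s of line bundles admitting sections whose zero loci have empty intersection, one gets $\prod_i c_1(O(1)\otimes L_i^\vee)=0$.
\item Second, by the formal group law one writes $c_1(O(1)\otimes L_i^\vee)=(\zeta-c_1(L_i))\cdot u_i$ with $u_i$ a unit. This lets one pass to $\prod_i(\zeta-c_1(L_i))=0$.
\end{itemize}
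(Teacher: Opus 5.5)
Your overall route is the standard Levine--Morel one, which is all the paper offers here: it states the proposition and cites \cite[Section 1.1, Section 4.1.7]{LevineMorel2007} without proof. Parts (1) and (2) are fine as written. In (3), two pieces are also fine: the reduction to the identity $\prod_{i=1}^r(\zeta-c_1(L_i))=0$, and the unit argument $c_1(O(1)\otimes L_i^\vee)=F_A(\zeta,\chi(c_1(L_i)))=(\zeta-c_1(L_i))\,u_i$, with $\chi$ the formal inverse. Here $u_i$ is invertible because $\zeta$ and $c_1(L_i)$ are nilpotent, and $F_A$ is constructed without the Whitney formula, so nothing is circular.

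There is a mismatch in the first step of your last paragraph. You reduced to $E$ with a filtration $0=E_0\subset E_1\subset\cdots\subset E_r=E$ and $E_i/E_{i-1}\cong L_i$, but then you use composites $L_i\hra E\thra O(1)$. Only $L_1=E_1$ is a subbundle of $E$. For $i\geq 2$, $L_i$ is a subquotient, so there is no such section of $O(1)\otimes L_i^\vee$ on $\bbP(E)$, and the ``empty common zero locus'' argument has nothing to apply to. Also, vanishing of products of $c_1$'s for sections with no common zero is not an axiom; to get it from (A2) you need transversality.

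Either of two fixes works.

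\emph{Split the filtration.} The splittings of $0\ra E_1\ra E\ra E/E_1\ra 0$ form a torsor under $\mathcal{H}om(E/E_1,E_1)$, so by (A4) pulling back to it loses nothing. Iterating, you may assume $E=\bigoplus_i L_i$. Then the sections $L_i\ra O(1)$ are transverse, with zero loci meeting like coordinate hyperplanes in $\bbP^{r-1}$, and iterated (A2) gives $\prod_i c_1(O(1)\otimes L_i^\vee)=0$.

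\emph{Induct on $r$ directly.} The map $L_1\ra O(1)$ is a transverse section of $O(1)\otimes L_1^\vee$ with zero locus $\iota:\bbP(E/E_1)\hra\bbP(E)$. By (A2) and (A4), exactly as in the case $r=1$ of Proposition~\ref{proposition: geometric interpretation of top Chern class}, $c_1(O(1)\otimes L_1^\vee)=\iota_*(1)$. Since $\iota^*O(1)=O_{\bbP(E/E_1)}(1)$, the projection formula gives
\[
\prod_{i=1}^r c_1(O(1)\otimes L_i^\vee)=\iota_*\Bigl(\prod_{i=2}^r c_1\bigl(O_{\bbP(E/E_1)}(1)\otimes L_i^\vee\bigr)\Bigr).
\]
This vanishes by induction, since $E/E_1$ is filtered with quotients $L_2,\dots,L_r$. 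The base case is $c_1(O_X)=0$, because $\bbP(L_1)=X$ and $O(1)=L_1$.

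With either fix, the rest of your argument goes through unchanged.
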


One important example of an oriented cohomology theory is the Chow ring. The proposition above shows that many of the properties satisfied by the Chern classes in the Chow ring still hold for any oriented cohomology theory. However, there is a significant difference when tensor products of vector bundles come into play. 

Let $L$ and $M$ be two line bundles over the same base $X\in\Sm_k$. Then, in the Chow ring, one has $c_1(L\otimes M)=c_1(L) + c_1(M)$. This identity no longer holds in general for arbitrary oriented cohomology theories, as we now explain. 

\begin{Def}
    A commutative formal group law of rank one is a pair $(R,F)$ consisting of a commutative ring $R$ and a formal power series
    \begin{equation*}
        F(u,v) = \sum_{i,j\geq 0}r_{i,j}u^iv^j\in R[[u,v]]
    \end{equation*}
    satisfying the following identities: 
    \begin{enumerate}[itemsep=0em]
        \item $F(u,0) = F(0,u) = u \in R[[u]]$.
        \item $F(u,v) = F(v,u) \in R[[u,v]]$.
        \item $F(F(u,v),w) = F(u,F(v,w)) \in R[[u,v,w]]$.  
    \end{enumerate}
    When $R=R^*$ is a graded ring, we say that $F$ is graded if its coefficients satisfy $r_{i,j}\in R^{1-i-j}$ for all $i,j\geq 0$, so that we have $F(u,v)\in R^1$ whenever $u,v\in R^1$. We will use the term \textit{graded formal group law} in what follows to refer to such an $F$.
\end{Def}

\begin{rem}
    The first two conditions imply that $F(u,v)=u+v+\sum_{i,j\geq 1}r_{i,j}u^i v^j$ and $r_{i,j}=r_{j,i}$ for all $i,j\geq 1$. The last condition gives rise to many relations among the $r_{i,j}$ which are too complicated to write down explicitly.
\end{rem}

It is not difficult to prove that there exists a universal graded formal group law $(\bbL^*,F)$. This means that, for any graded formal group law $F_{R^*}$ over a graded ring $R^*$, there exists a unique morphism $\phi:\bbL^*\ra R^*$ of graded commutative rings such that $F$ is sent to $F_{R^*}$ under $\phi$.

Lazard proved in \cite{Lazard1955} that the ring $\bbL^*$ is a polynomial ring over $\bbZ$ in countably many variables, namely, 
\begin{equation*}
    \bbL^*\cong\bbZ[x_1,x_2,\ldots],
\end{equation*}
where the degree of $x_i$ is $-i$. The ring $\bbL^*$ is now called the Lazard ring.

As pointed out by Levine and Morel in their book \cite[Lemma 1.1.3]{LevineMorel2007}, the first Chern class of the tensor product of two line bundles defines a graded formal group law over the coefficient ring~$A^*(k)$. To be more concrete, we have the following result.

Let $A^*$ be an oriented cohomology theory, and let $L\ra X$ be a line bundle over $X\in \Sm_k$. Then~$c_1(L)^m$ vanishes for all $m>\dim(X)$. Moreover, there exists a unique formal power series
\begin{equation*}
    F_A(u,v) = \sum_{i,j\geq 0} a_{i,j}u^iv^j \in A^*(k)[[u,v]]
\end{equation*}
such that, for any $X\in\Sm_k$ and any line bundles $L,M$ over $X$, we have 
\begin{equation*}
    c_1(L \otimes M) = F_A(c_1(L),c_1(M)) \text{\ in\ } A^1(X).
\end{equation*}
From the properties of the first Chern class, it follows that $F_A$ defines a graded formal group law over $A^*(k)$. By the universality of the Lazard ring, there exists a unique morphism $\bbL^*\ra A^*(k)$ of graded rings that maps $F$ to $F_A$. We will write $\alpha+_A\beta$ for $F_A(\alpha,\beta)$, where $\alpha,\beta\in A^*(X)$. In particular, we have $c_1(L\otimes M)=c_1(L)+_A c_1(M)$.

As pointed out by Levine and Morel in \cite[Remark 4.1.2]{LevineMorel2007}, the splitting principle also holds in any oriented cohomology theory $A^*$. More precisely, for any vector bundle $E\ra X$ of rank $r$ over~$X\in\Sm_k$, there is a smooth morphism $f:X'\ra X$ such that
\begin{enumerate}[itemsep=0em]
    \item The bundle $f^*E$ splits as a direct sum of $r$ line bundles $L_i$, $1\leq i\leq r$, over $X'$.
    \item The morphism $f^*:A^*(X)\ra A^*(X')$ is injective.
\end{enumerate}
Therefore, if we denote $c_1(L_i)\in A^1(X')$ by $\alpha_i$, we have 
\begin{equation*}
    f^*c(E)=c(f^*E)=\prod_{i=1}^r c(L_i)=\prod_{i=1}^r(1+\alpha_i) \text{\ in\ } A^*(X').
\end{equation*}
As usual, we call $\alpha_i$ the Chern roots of $E$. Then $c_i(E)$ is identified with the $i$-th elementary symmetric polynomial of these Chern roots under the morphism $f^*$.

Apart from Chern classes, we can also define Segre classes in any oriented cohomology theory. 

\begin{Def}
    Let $A^*$ be an oriented cohomology theory. Let $X\in\Sm_k$, and let $E\ra X$ be a vector bundle of rank $r$. For any $i\in\bbZ$, let $l\in\bbZ$ satisfies $l\geq \max(0,1-i-r)$, and let~$p:\bbP((E\oplus O_X^{\oplus l})^\vee)\ra X$ be the projectivization of the dual bundle of $E\oplus O_X^{\oplus l}\ra X$. The $i$-th Segre class~$s^A_i(E)\in A^i(X)$ of $E$ is defined by 
    \begin{equation*}
        s^A_i(E)\coloneqq p_*(c_1(O(1))^{i+r+l-1}),
    \end{equation*}
    where $O(1)\ra \bbP((E\oplus O_X^{\oplus l})^\vee)$ is the tautological quotient line bundle.

    We define $s^A(E)\coloneqq\sum_{i\in\bbZ}s^A_i(E)$ to be the total Segre class. One should be aware that it is an element of $\prod_{i\in\bbZ}A^i(X)$ instead of $A^*(X)$. 
\end{Def}

\begin{rem}
    One checks that the definition above does not depend on the choice of $l$. We will omit the superscript and use only $s_i(E)$ when the cohomology theory $A^*$ is clear from the context.
\end{rem}

\begin{rem}
    In contrast with the case of the Chow ring, Segre classes with negative indices may be non-trivial in a general oriented cohomology theory $A^*$. Moreover, the class $s^A_0(E)$ is not necessarily equal to $1_X$, whereas in the Chow ring one has $s_0(E)=1_X$.
\end{rem}

\begin{prop}
\label{proposition: morphism of oriented cohomology theories preserves Chern classes and Segre classes}
    Let $\theta:A^*\ra B^*$ be a morphism of oriented cohomology theories, then for any vector bundle $E\ra X$ over $X\in \Sm_k$, we have $\theta(c^A(E))=c^B(E)$, and $\theta(s^A(E))=s^B(E)$.
\end{prop}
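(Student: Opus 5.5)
The plan is to show that both Chern and Segre classes are built out of operations that a morphism of oriented cohomology theories commutes with: pullbacks, projective pushforwards, the ring structure and the first Chern class of a line bundle. I treat Chern classes first, then Segre classes.

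\textbf{Step 1: first Chern classes.} Let $L\ra X$ be a line bundle with zero section $s:X\hra L$. Then $c_1^A(L)=s^*s_*(1_X)$. Since $\theta$ is a natural transformation of ring-valued functors, it commutes with $s^*$ and sends $1_X$ to $1_X$. By Definition \ref{definition: morphisms of oriented cohomology theories} it also commutes with $s_*$. Hence $\theta(c_1^A(L))=c_1^B(L)$. I expect the only point needing care to be here: the pushforward is along the closed embedding $s$ into the total space $L$, and $L$ is quasi-projective. So $s$ is a projective morphism in $\Sm_k$, and $\theta$ commutes with it by hypothesis.

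\textbf{Step 2: Chern classes.} Let $E$ have rank $r$ and let $q:\bbP(E)\ra X$ be its projectivization. Step 1 gives $\theta(\zeta_A)=\zeta_B$, where $\zeta=c_1(O(1))$. Apply the ring homomorphism $\theta$ to the defining relation $\sum_i(-1)^ic_i^A(E)\zeta_A^{r-i}=0$ in $A^r(\bbP(E))$. Using $\theta\circ q^*=q^*\circ\theta$, this yields $\sum_i(-1)^i q^*\theta(c_i^A(E))\,\zeta_B^{r-i}=0$ in $B^r(\bbP(E))$, with $\theta(c_0^A(E))=1$. By the projective bundle formula in $B^*$, the elements $1,\zeta_B,\dots,\zeta_B^{r-1}$ form a basis, so the coefficients in this relation are uniquely determined. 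Therefore $\theta(c_i^A(E))=c_i^B(E)$ for every $i$, and summing gives $\theta(c^A(E))=c^B(E)$.

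\textbf{Step 3: Segre classes.} Fix $i$ and $l\ge\max(0,1-i-r)$, and let $p:\bbP((E\oplus O_X^{\oplus l})^\vee)\ra X$. Then
\begin{equation*}
\theta(s_i^A(E))=\theta\bigl(p_*(c_1^A(O(1))^{i+r+l-1})\bigr)=p_*\bigl(\theta(c_1^A(O(1)))^{i+r+l-1}\bigr)=p_*\bigl(c_1^B(O(1))^{i+r+l-1}\bigr)=s_i^B(E).
\end{equation*}
The second equality uses that $\theta$ commutes with the projective pushforward $p_*$ and is multiplicative; the third is Step 1. The same choice of $l$ is allowed in both theories, since the definition does not depend on $l$. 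This holds for each $i$, so $\theta$, applied componentwise to $s^A(E)\in\prod_i A^i(X)$, sends it to $s^B(E)$.
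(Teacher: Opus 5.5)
Your proof is correct and follows the paper's argument. The paper also applies $\theta$ to the defining relation $\sum_i(-1)^ic_i^A(E)(\zeta^A)^{r-i}=0$, uses $\theta(\zeta^A)=\zeta^B$ (since $\zeta=s^*s_*(1)$), concludes by uniqueness from the projective bundle formula, and treats Segre classes as pushforwards of powers of $c_1(O(1))$. Your Step~1 only makes explicit the preservation of $c_1$ that the paper uses implicitly when it says Segre classes are preserved ``by definition''.
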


\begin{proof}
    Since $\theta$ commutes with pushforwards and pullbacks, it preserves Segre classes by definition. In what follows, we will prove that it preserves Chern classes as well.

    Suppose that the rank of $E$ is $r$. Let $p:\bbP(E)\ra X$ be the projective bundle associated to $E$, and let $O(1)\ra\bbP(E)$ be the tautological quotient line bundle. Let $s:X\ra E$ be the zero section. We define $\zeta^A\in A^1(\bbP(E))$ by $\zeta^A=s^*s_*(1)$, and define $\zeta^B$ similarly. Since $\theta$ commutes with pullbacks and pushforwards, we have $\theta(\zeta^A)=\zeta^B$. 

    By the definition of Chern classes in $A^*$, we have $c^A_0(E)=1^A_X$ and 
    \begin{equation}
    \label{eq: def of Chern classes in A}
        \sum_{i\geq 0}(-1)^i c^A_i(E)\cdot(\zeta^A)^{r-i}=0 \text{\ in\ } A^r(\bbP(E)).
    \end{equation}

    After applying $\theta$ to equation (\ref{eq: def of Chern classes in A}) and using $\theta(\zeta^A)=\zeta^B$, we obtain
    \begin{equation}
    \label{eq: def of Chern classes in B}
        \sum_{i\geq 0}(-1)^i\theta(c^A_i(E))\cdot(\zeta^B)^{r-i}=0 \text{\ in\ } B^r(\bbP(E)).
    \end{equation}
    Since $\theta(c^A_0(E))=\theta(1^A_X)=1^B_X=c^B_0(E)$, comparing equation (\ref{eq: def of Chern classes in B}) with the definition of~$c^B_i(E)$ and using the uniqueness of the Chern classes, we have $\theta(c^A_i(E))=c^B_i(E)$ for all $0\leq i\leq r$.
\end{proof}

\subsection{Algebraic cobordism and its geometric interpretation}
\label{subsection: Algebraic cobordism and its geometric interpretation}

As shown in \cite{LevineMorel2007}, there exists a unique universal oriented cohomology theory, called algebraic cobordism and denoted by $\Omega^*$. Recall that for any oriented cohomology theory $A^*$, there exists a natural morphism $\bbL^*\ra A^*(k)$ which maps the universal formal group law $F$ to $F_A$. In the case that $A^*=\Omega^*$, the natural morphism $\bbL^*\ra\Omega^*(k)$ turns out to be an isomorphism by \cite[Theorem 1.2.7]{LevineMorel2007}. We will not distinguish these two rings in the following.

To any projective morphism $\phi:Y\ra X$ in $\Sm_k$, one can associate a class $\phi_*(1_Y)$ in $\Omega^*(X)$, which we denote by 
\begin{equation*}
    [\phi:Y\ra X]\coloneqq \phi_*(1_Y)\in\Omega_{\dim(Y)}(X)=\Omega^{\dim(X)-\dim(Y)}(X).
\end{equation*}
Classes of this form are called standard cycles.

Levine and Morel show that for any $X\in \Sm_k$, the group $\Omega^*(X)$ is generated by standard cycles (see \cite[Lemma 2.5.11]{LevineMorel2007}).
In particular, we obtain $\Omega^i(X)=\Omega_{\dim(X)-i}=0$ when~$i>\dim(X)$. When $X=\Spec(k)$, the group $\Omega_d(k)$ is generated by classes $[Z/k]\coloneqq [\pi_Z:Z\ra \Spec(k)]$, where~$Z$ is a smooth projective variety of dimension $d$ over $k$, and $\pi_Z$ is the structure morphism of $Z$.

Using these generators, projective pushforwards and smooth pullbacks can be described explicitly. Let $f:X\ra X'$ be a projective morphism in $\Sm_k$. Then the pushforward $f_*:\Omega_*(X)\ra\Omega_*(X')$ sends the standard cycle $[\phi:Y\ra X]$ to the standard cycle $[f\circ\phi:Y\ra X']$. Similarly, the pullback $f^*:\Omega^*(X)\ra\Omega^*(X')$ of a smooth morphism $f:X'\ra X$ in $\Sm_k$ sends the standard cycle~$[\phi:Y\ra X]$ to the standard cycle~$[pr_{X'}:Y\times_X X'\ra X']$. The pullbacks of general morphisms in $\Sm_k$ require a more involved construction, which can be found in \cite[Section~6.5]{LevineMorel2007}.

The external product can also be described explicitly. Let $[f:Y\ra X]$ and $[f':Y'\ra X']$ be two standard cycles in $\Omega^*(X)$ and $\Omega^*(X')$ respectively. Their external product in $\Omega^*(X\times X')$ is given by the standard cycle $[f\times f':Y\times Y'\ra X\times X']$. 

Some Chern classes can also be rewritten in a similar manner once the vector bundle admits a transverse section. More precisely, we have the following result.

\begin{prop}
\label{proposition: geometric interpretation of top Chern class}
    Let $X\in\Sm_k$, and $E\ra X$ be a vector bundle of rank $r$ over $X$. Suppose that $\sigma\in\Gamma(X,E)$ is a transverse section of $E$, with zero-locus $Y\subset X$. Then $Y$ is a smooth closed subvariety of codimension $r$ in $X$, and we have $[Y\hra X]=c_r(E)$ in $\Omega^r(X)$.
\end{prop}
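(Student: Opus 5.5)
Smoothness and the codimension of $Y$ are standard: locally $E$ is trivial and $\sigma=(\sigma_1,\dots,\sigma_r)$. Transversality of $\sigma$ to the zero section means the differentials $d\sigma_i$ are linearly independent at every point of $Y$. Hence $Y$ is locally cut out by a regular sequence with independent differentials, so $Y$ is smooth of pure codimension $r$. The substance is the identity $[Y\hra X]=c_r(E)$ in $\Omega^r(X)$. My plan is to reduce it to the axioms via the zero section, the same way $c_1(L)=s^*s_*(1_X)$ was obtained for line bundles.

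Let $s:X\ra E$ be the zero section and $i:Y\hra X$ the inclusion. The key square is the cartesian diagram with top row $Y\xra{i}X$, vertical maps $i$ on the left and $s$ on the right, and bottom row $X\xra{\sigma}E$. Its fiber product $X\times_{E,\sigma}X$ is the zero scheme of $\sigma$, namely $Y$. The maps $\sigma$ and $s$ are transverse in the sense of \cite[Definition 1.1.1]{LevineMorel2007} exactly because $\sigma$ is a transverse section. Axiom (A2) then gives
\begin{equation*}
    \sigma^*s_*(1_X)=i_*i^*(1_X)=i_*(1_Y)=[Y\hra X].
\end{equation*}
Next, let $p:E\ra X$ be the projection. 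By (A4) the pullback $p^*$ is an isomorphism, and $\sigma^*$ and $s^*$ are both inverse to it since $p\circ\sigma=p\circ s=\Id_X$. Hence $\sigma^*=s^*$, and $[Y\hra X]=s^*s_*(1_X)$, which depends only on $E$.

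It remains to prove $s^*s_*(1_X)=c_r(E)$, and I expect this to be the main step. I would use the splitting principle. Take a smooth $f:X'\ra X$ with $f^*$ injective and $f^*E=\bigoplus L_j$. Smooth pullback commutes with $s^*s_*$ by (A2), since $f$ is transverse to everything. So we may assume $E=L_1\oplus\cdots\oplus L_r$. Then the zero section of $E$ factors as the composite of the zero sections $s_j$ of the $L_j$ pulled back along the appropriate partial sums. Iterating (A2) and the projection formula gives $s^*s_*(1)=\prod_j s_j^*s_{j*}(1)=\prod_j c_1(L_j)$. This equals the $r$-th elementary symmetric polynomial of the Chern roots, i.e.\ $c_r(E)$, using the identification of Chern classes with symmetric functions of roots stated above.

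Alternatively, if a cleaner route is wanted, I would cite \cite[Section 4.1.7 / Lemma 6.6.7]{LevineMorel2007}, where $c_r(E)=s^*s_*(1_X)$ is proved for any oriented theory. The only delicate point is checking that the factorization of the zero section for a direct sum consists of transverse squares, which is immediate since all maps involved are sections of bundles meeting zero sections transversally.
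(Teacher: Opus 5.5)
Your proof is correct, but it is organized differently from the paper's. Your first step is exactly the paper's proof of the case $r=1$: the cartesian square of $\sigma$ against the zero section, transverse commutativity, and $\sigma^*=s^*$ via (A4). You observe that this works verbatim in every rank, giving $[Y\hra X]=s^*s_*(1_X)$. You then prove the section-free identity $s^*s_*(1_X)=c_r(E)$ by the splitting principle. The zero section of $L_1\oplus\cdots\oplus L_r$ becomes a chain of zero sections $z_k$ of the line bundles $\pi_{k-1}^*L_k$ over the partial sums, and the product telescopes via $z_k^*z_{k*}(\beta)=\pi_{k-1}^*c_1(L_k)\cdot\beta$.

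The paper instead inducts on $r$ geometrically. On $\bbP(E)$, by \cite[Lemma 3.10]{KollarVoisin2024}, $p^{-1}(Y)$ is the zero-locus of a transverse section of the rank $r-1$ bundle $F|_{X'}$ inside the hypersurface $X'=\{\sigma'=0\}$. The induction hypothesis, the case $r=1$, and the Whitney formula for $0\ra F\ra p^*E\ra O(1)\ra 0$ then give $p^*[Y\hra X]=p^*c_r(E)$, and injectivity of $p^*$ concludes.

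What each route buys:
\begin{itemize}
\item Your route proves more: $s^*s_*(1_X)=c_r(E)$ for every vector bundle, whether or not it has a transverse section. It needs no transversality input beyond the single square. The cost is the splitting principle in its strong form, with an honest direct-sum splitting (flag bundle followed by an affine-bundle torsor, handled by (A4)). You also need compatibility of $s^*s_*$ with smooth pullback. To be precise, this comes from (A2) applied to the induced smooth map $f^*E\ra E$ of total spaces against the zero section, not from $f$ being ``transverse to everything.''
\item The paper's route needs only injectivity of $p^*$ and one Whitney formula, beyond the geometric lemma. It also sets up diagram (\ref{diagram: cbs diagram}), which is reused in Proposition \ref{proposition: f_*D is preserved under pushforward of inclusion of smooth complete bundle-sections}.
\end{itemize}

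Your fallback citation to Levine--Morel is unnecessary. The paper states it found no proof in the literature beyond $r=1$, so verify that reference before relying on it.
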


Although it seems to be a well-known result, its proof does not seem to appear in the existing literature except for $r=1$.
Before presenting a complete proof of the general case, we consider the following construction that will be used repeatedly in the following.

Suppose that $Y$ is the zero-locus of a transverse section $\sigma$ of a vector bundle $\pi:E\ra X$ of rank~$r$. Denote by $p: \bbP(E) \ra X$ the projectivization of $E$. Let $O(1)\ra\bbP(E)$ be the tautological quotient line bundle, and let $F$ be the kernel of the quotient bundle morphism $p^*E \ra O(1)$. Then the vector bundle $F\ra\bbP(E)$ is of rank $r-1$.
    
The section $\sigma$ of $E$ induces a section $p^*\sigma$ of $p^*E$, which projects to a section $\sigma'$ of $O(1)$. We denote by $X' \subset \bbP(E)$ the zero-locus of $\sigma'$. Then over $X'$, the section $\sigma'$ induces a section $\sigma''$ of the restriction $F|_{X'}\ra X'$.

Koll\'ar and Voisin proved that $X'$ is a smooth hypersurface of $\bbP(E)$, and that the section $\sigma''$ is transverse with zero-locus $p^{-1}(Y) = \bbP(E|_Y)$ (see \cite[Lemma 3.10]{KollarVoisin2024}). We obtain the following commutative diagram.

\begin{equation}
\label{diagram: cbs diagram}
    \begin{tikzcd}
        O(1) \ar[dr] & p^*E \ar[d] \ar[l, two heads] & E \ar[d]\\
        F|_{X'} \ar[dr] & \bbP(E) \ar[ddr,phantom,"\square"] \ar[u, bend right, "p^*\sigma"'] \ar[ul, bend left, looseness=0.5, "\sigma'"] \ar[r, "p"] & X \ar[u, bend right, "\sigma"']\\
        & \{\sigma'=0\}=X' \ar[u,hook,"j''"]\ar[ul, bend left, looseness=0.5, "\sigma''"] & \\
        & \{\sigma''=0\}=p^{-1}(Y)=\bbP(E|_Y) \ar[u, hook,"j'"] \ar[r, "p|_Y"] & Y \ar[uu,hook,"j"']
    \end{tikzcd}
\end{equation}

Using this commutative diagram, we can prove Proposition \ref{proposition: geometric interpretation of top Chern class} by induction on the rank $r$ of the vector bundle $E$.

\begin{proof}[Proof of Proposition \ref{proposition: geometric interpretation of top Chern class}]
    Denote by $s_0:X\hra E$ the zero-section of $E$. 
    
    When~$r=1$, we consider the following Cartesian diagram.
    \begin{equation*}
        \begin{tikzcd}
            Y \ar[r,"i"] \ar[d,"i"'] \ar[dr,phantom,"\square"] & X \ar[d,"s_0"]\\
            X  \ar[r,"s"']& E
        \end{tikzcd}
    \end{equation*}
    By transverse commutativity (see Definition \ref{definition of an oriented cohomology theory over Sm_C}(A2)), we have 
    \begin{equation*}
        [i:Y\hra X]=i_*(1_Y)=i_*i^*(1_X)=s^*s_{0*}(1_X)=s_0^*s_{0*}(1_X)=c_1(E).
    \end{equation*}
    Here, we have $s^*=s_0^*$ since both are inverses of the isomorphism $\pi^*:\Omega^1(X)\cong\Omega^1(E)$.

    When $r\geq 2$, we proceed by induction on $r$. We suppose that the statement holds for any vector bundle of rank $r-1$. Apply transverse commutativity to diagram (\ref{diagram: cbs diagram}), we obtain 
    \begin{equation}
    \label{eq: p^*[Y hra X]=j''_*j'_*1}
        p^*[Y\hra X]=p^*j_*(1_Y)=j''_*j'_*(1_{p^{-1}(Y)}).
    \end{equation} 
    Since $p^{-1}(Y)\subset X'$ is the zero-locus of the transverse section $\sigma''$ of the vector bundle $F|_{X'}$, which is of rank $r-1$ over $X'$, we may apply the induction hypothesis and get 
    \begin{equation*}
        j'_*(1_{p^{-1}(Y)})=c_{r-1}(F|_{X'})=j''^*c_{r-1}(F).
    \end{equation*} 
    Therefore, equation (\ref{eq: p^*[Y hra X]=j''_*j'_*1}) becomes
    \begin{equation}
    \label{eq: p^*[Y hra X]=c_r-1(F)j''_*(1_X')}
        p^*[Y\hra X]=j''_*j''^*c_{r-1}(F)=c_{r-1}(F)\cdot j''_*(1_{X'}).
    \end{equation}
    Since $j'':X'\hra \bbP(E)$ is the inclusion of a smooth hypersurface defined as the zero-locus of the transverse section $\sigma'$, we have $j''_*(1_{X'})=c_1(O(1))$ by the case $r=1$. So equation (\ref{eq: p^*[Y hra X]=c_r-1(F)j''_*(1_X')}) becomes 
    \begin{equation*}
        p^*[Y\hra X]=c_{r-1}(F)\cdot c_1(O(1))=c_r(p^*E)=p^*c_r(E).
    \end{equation*}
    Since $p^*:\Omega^*(X)\ra\Omega^*(\bbP(E))$ is injective, we obtain $[Y\hra X]=c_r(E)$. This completes the induction step and hence the proof.
\end{proof}

Let $Y$ be a smooth complete bundle section in $X$. Suppose that $Y$ is defined as the zero-locus of a section $\sigma$ of a vector bundle $E\ra X$ of rank $r$. Since $Y$ is smooth, the section $\sigma$ has to be transverse. Therefore, by the proposition above, we have $[Y\hra X]=c_r(E)$ in $\Omega^r(X)$. In particular, for every smooth hypersurface $Y$ in $X$, we can identify $[Y\hra X]$ with $c_1(O_X(Y))$ in $\Omega^1(X)$. 

\subsection{Algebraic cobordism and Chow groups}
\label{subsection: Algebraic cobordism and Chow groups}

Since the Chow ring is an oriented cohomology theory and algebraic cobordism is universal among such theories, there exists a natural morphism 
\begin{equation*}
    \theta(X):\Omega^*(X) \ra \CH^*(X),
\end{equation*}
for any $X\in\Sm_k$, which commutes with both pullbacks and projective pushforwards. By Proposition \ref{proposition: morphism of oriented cohomology theories preserves Chern classes and Segre classes}, it sends Chern classes and Segre classes in algebraic cobordism to the corresponding classes in the Chow ring. 

When $X=\Spec(k)$, the morphism $\theta(X)$ becomes a morphism $\bbL^*\ra \bbZ$ of graded rings, where~$\bbZ$ is concentrated in degree zero. This morphism is given by the composition of the projection $\bbL^*\ra\bbL^0$ with the isomorphism~$\bbL^0\cong\bbZ$. Consequently, for any $X\in\Sm_k$, the morphism $\theta(X)$ induces a morphism 
\begin{equation}
\label{eq: Omega^*(X)otimes_bbL bbZ cong CH^*(X)}
    \Omega^*(X)\otimes_{\bbL^*}\bbZ\ra \CH^*(X),
\end{equation}
which is an isomorphism by \cite[Theorem 1.2.19]{LevineMorel2007}. This leads to the following lemma.

\begin{lem}
\label{lemma: kernel of theta(X)}
    Let $X\in\Sm_k$, then the natural morphism $\theta(X):\Omega^*(X)\ra \CH^*(X)$ is always surjective, and the kernel is given by the image of the morphism $\Omega^*(X)\otimes_{\bbL_*}\bbL_{>0}\ra \Omega^*(X)$, denoted by $\Omega^*(X)\times\bbL_{>0}$. It is an $\bbL_*$-submodule of $\Omega^*(X)$.
\end{lem}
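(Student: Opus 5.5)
The plan is to deduce everything from the isomorphism (\ref{eq: Omega^*(X)otimes_bbL bbZ cong CH^*(X)}) of Levine and Morel, that is, from \cite[Theorem 1.2.19]{LevineMorel2007}. The natural map $\theta(X)$ factors as the quotient map
\begin{equation*}
    \Omega^*(X)\ra\Omega^*(X)\otimes_{\bbL^*}\bbZ
\end{equation*}
followed by the isomorphism (\ref{eq: Omega^*(X)otimes_bbL bbZ cong CH^*(X)}). This factorization holds because $\theta(X)$ is $\bbL^*$-linear, where $\bbL^*$ acts on $\CH^*(X)$ through $\theta(k):\bbL^*\ra\bbZ$. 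That linearity follows from the compatibility of $\theta$ with pullbacks and with the external product, since $\lambda\cdot\alpha=\lambda\times\alpha$ for $\lambda\in\Omega^*(k)$. The quotient map $\alpha\mapsto\alpha\otimes 1$ is surjective, because every element $\alpha\otimes n$ equals $(n\alpha)\otimes 1$. Composing with an isomorphism then gives the surjectivity of $\theta(X)$.

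For the kernel, I would identify the kernel of the morphism $\bbL^*\ra\bbZ$. By the description in Section \ref{subsection: Algebraic cobordism and Chow groups}, this morphism is the projection onto $\bbL^0\cong\bbZ$. Since $\bbL^*\cong\bbZ[x_1,x_2,\ldots]$ with $\deg x_i=-i$, the ring lives in nonpositive cohomological degrees, so its kernel is exactly the ideal $\bbL_{>0}=\bigoplus_{i>0}\bbL_i$. This gives a short exact sequence
\begin{equation*}
    0\ra\bbL_{>0}\ra\bbL_*\ra\bbZ\ra 0.
\end{equation*}
Tensoring with $\Omega^*(X)$ is right exact, so the sequence
\begin{equation*}
    \Omega^*(X)\otimes_{\bbL_*}\bbL_{>0}\ra\Omega^*(X)\ra\Omega^*(X)\otimes_{\bbL_*}\bbZ\ra 0
\end{equation*}
is exact. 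Hence the kernel of the quotient map, and therefore of $\theta(X)$, is the image of the first arrow. That image is $\Omega^*(X)\times\bbL_{>0}$, the subgroup generated by the products $\lambda\cdot\alpha$ with $\lambda\in\bbL_{>0}$.

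It remains to show that this image is an $\bbL_*$-submodule. This is immediate: it is the image of an $\bbL_*$-linear map, or equivalently, $\bbL_{>0}$ is an ideal, so $\mu\lambda\alpha=(\mu\lambda)\alpha$ lies in it again. Alternatively, it is the kernel of the $\bbL_*$-linear map $\theta(X)$.

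The only point needing care is the $\bbL^*$-linearity of $\theta(X)$, which justifies the factorization through the tensor product. I would check it via the projection formula: $\lambda\cdot\alpha=\pi_X^*\lambda\cdot\alpha$, where $\pi_X:X\ra\Spec(k)$ is the structure morphism, and $\theta$ commutes with the pullback $\pi_X^*$ and with ring products. No further obstacle is expected.
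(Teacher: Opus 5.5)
Your proposal is correct and follows the paper's argument. Like the paper, you tensor $0\ra\bbL_{>0}\ra\bbL_*\ra\bbZ\ra 0$ with $\Omega^*(X)$, use right exactness, and identify the middle map with $\theta(X)$ through the Levine--Morel isomorphism; your explicit check that $\theta(X)$ is $\bbL^*$-linear, so that it factors through $\Omega^*(X)\otimes_{\bbL^*}\bbZ$, supplies a detail the paper leaves implicit.
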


\begin{proof}
    The morphism $\bbL_*\ra \bbL_0\cong\bbZ$ fits into a short exact sequence of $\bbL_*$-modules
    \begin{equation*}
        0\ra \bbL_{>0}\ra\bbL_*\ra \bbZ \ra 0.
    \end{equation*}
    Tensoring with $\Omega^*(X)$ over $\bbL_*$, we obtain an exact sequence
    \begin{equation*}
        \Omega^*(X)\otimes_{\bbL_*} \bbL_{>0} \ra \Omega^*(X) \ra \Omega^*(X)\otimes_{\bbL_*}\bbZ\cong \CH^*(X) \ra 0,
    \end{equation*}
    where the isomorphism follows from (\ref{eq: Omega^*(X)otimes_bbL bbZ cong CH^*(X)}), and the middle morphism is precisely $\theta(X)$. The lemma follows immediately from this exact sequence.
\end{proof}

\begin{lem}
\label{lemma: generating Chow ring implies generating Omega}
    Let $X\in\Sm_k$, and let $\theta\coloneqq\theta(X):\Omega^*(X) \ra \CH^*(X)$ be the natural morphism. If a set of classes $\{\alpha_{\lambda},\ \lambda \in \Lambda\}$ satisfies that $\{\theta(\alpha_\lambda),\ \lambda\in\Lambda\}$ generates $\CH^*(X)$ as a ring, then the set~$\{\alpha_{\lambda},
    \ \lambda \in \Lambda\}$ generates $\Omega^*(X)$ as an $\bbL^*$-algebra.
\end{lem}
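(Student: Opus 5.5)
We argue by induction on the dimension degree, working with the homological grading $\Omega_d(X)=\Omega^{\dim(X)-d}(X)$. Let $R\subseteq\Omega^*(X)$ be the $\bbL^*$-subalgebra generated by the $\alpha_\lambda$. Since $R$ is a graded subring, it suffices to show $\Omega_d(X)\subseteq R$ for every $d$. Because $\Omega_d(X)=0$ for $d<0$, we may induct on $d\geq 0$, assuming $\Omega_{d'}(X)\subseteq R$ for all $d'<d$.

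Let $\beta\in\Omega_d(X)$. By hypothesis, $\theta(\beta)\in\CH_d(X)$ is a $\bbZ$-linear combination of monomials in the $\theta(\alpha_\lambda)$. Since $\theta$ is a ring homomorphism, we can lift these monomials to $\Omega^*(X)$. We then obtain an element $\beta'\in R$ that is a $\bbZ$-combination of monomials in the $\alpha_\lambda$ with $\theta(\beta')=\theta(\beta)$. The $\alpha_\lambda$ might not be homogeneous. If not, we first replace each $\alpha_\lambda$ by its homogeneous components. Alternatively, we take the degree-$d$ component of $\beta'$; this still lies in $R$ provided the generators are homogeneous, and it still maps to $\theta(\beta)$ since $\theta$ preserves degree. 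We assume the $\alpha_\lambda$ are homogeneous, which is the intended setting.

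By Lemma \ref{lemma: kernel of theta(X)}, the difference $\beta-\beta'$ lies in $\Omega^*(X)\times\bbL_{>0}$. This means it can be written as a finite sum $\sum_k \lambda_k\gamma_k$, where each $\lambda_k\in\bbL_{e_k}$ with $e_k>0$ and each $\gamma_k\in\Omega^*(X)$. Taking homogeneous parts, we may take every $\gamma_k$ homogeneous of dimension degree $d-e_k<d$: the action of $\bbL_e$ raises the homological degree by $e$, because $\bbL_e=\bbL^{-e}$ lowers the cohomological degree by $e$. By the induction hypothesis, each $\gamma_k$ lies in $R$. Since $R$ is an $\bbL^*$-module, each $\lambda_k\gamma_k$ lies in $R$, and therefore $\beta=\beta'+\sum_k\lambda_k\gamma_k\in R$.

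The base case $d=0$ is covered by the same argument: the correction terms then live in $\Omega_{<0}(X)=0$, so $\beta=\beta'$. The only delicate point is the grading bookkeeping, namely that the kernel from Lemma \ref{lemma: kernel of theta(X)} is generated by products $\lambda\gamma$ with $\gamma$ of strictly smaller dimension. This follows because that kernel is the image of a graded map, so homogeneous components can be taken termwise.
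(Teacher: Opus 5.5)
Your proof is correct and is essentially the paper's argument: induction on the dimension degree $d$, lifting a polynomial expression for $\theta(\beta)$ to an element of $\Omega_d(X)$, and using Lemma~\ref{lemma: kernel of theta(X)} to write the discrepancy as $\bbL_{>0}$-multiples of classes in $\Omega_{<d}(X)$. Your explicit assumption that the $\alpha_\lambda$ are homogeneous is one the paper makes tacitly when it writes $P(\alpha_i)\in\Omega_d(X)$, and it is genuinely needed. On $\bbP^1$, the non-homogeneous class $(1+x_1)\,c_1(O(1))$ maps to a ring generator of $\CH^*(\bbP^1)$, but it does not generate $\Omega^*(\bbP^1)$ as an $\bbL^*$-algebra, since $1+x_1$ is not a unit in $\bbL^*$. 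For the same reason, your first suggested fix of replacing the $\alpha_\lambda$ by their homogeneous components would change the statement rather than prove it.
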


\begin{proof}
    Let $S\subseteq \Omega^*(X)$ be the $\bbL^*$-subalgebra generated by $\{\alpha_\lambda,\ \lambda\in\Lambda\}$. Then it suffices to show that $\Omega_d(X)\subseteq S$ for all $d\in\bbZ$. We prove this by induction on $d$. The case $d<0$ is trivial since~$\Omega_d(X)=0$. Assume that $d\geq 0$ and that $\Omega_{<d}(X)\subseteq S$. We will prove that $\Omega_d(X)\subseteq S$.

    Let $\beta$ be an arbitrary class in $\Omega_d(X)$. Since classes $\{\theta(\alpha_\lambda),\ \lambda\in\Lambda\}$ generate $\CH^*(X)$ as a ring, we have $\theta(\beta)=P(\theta(\alpha_i),\ i\in I)$, where $I$ is a finite subset of $\Lambda$ and $P$ is a polynomial with integer coefficients. Define $\alpha\in \Omega_d(X)$ by $\alpha=P(\alpha_i, i\in I)$. Then $\alpha\in S$ and satisfies $\theta(\alpha)=\theta(\beta)$. Therefore, by Lemma \ref{lemma: kernel of theta(X)}, the class $\alpha-\beta$ lies in $\Omega^*(X)\times\bbL_{>0}$. So we have $\alpha-\beta =\sum_{j\in J}a_j\beta_j$ for some~$a_j\in\bbL_{>0}$ and $\beta_j\in\Omega_{<d}(X)$, indexed by a finite set $J$. By the induction hypothesis, each~$\beta_j$ is in $S$, hence so is $\alpha-\beta$. Since $\alpha$ is in $S$ by construction, we have $\beta\in S$, which completes the induction step.
\end{proof}

Section \ref{subsection: Chern classes and Segre classes} defines Chern classes and Segre classes in any oriented cohomology theory. It is well known that for any vector bundle $E\ra X$, we have $c(E)\cdot s(E)=1$ in $\CH^*(X)$. However, this equality does not hold in general in algebraic cobordism. Nevertheless, there are still some relations between them. One such relation is the following result.

\begin{prop}
\label{proposition: L_*-subalgebras generated by Chern classes and Segre classes are the same}
    Let $X\in\Sm_k$, and let $E\ra X$ be a vector bundle. Then the $\bbL^*$-subalgebra of~$\Omega^*(X)$ generated by the Chern classes $c_i(E)$ coincides with the $\bbL^*$-subalgebra generated by the Segre classes $s_i(E)$.
\end{prop}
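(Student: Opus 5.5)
We plan to prove the two inclusions separately, each by expressing one family of classes as polynomials with $\bbL^*$-coefficients in the other. Let $C\subseteq\Omega^*(X)$ be the $\bbL^*$-subalgebra generated by the $c_i(E)$, and $S$ the one generated by the $s_i(E)$. The Segre class $s_i(E)$ lies in $\Omega^i(X)$, and $\Omega^i(X)=0$ for $i>\dim X$. So only finitely many $s_i(E)$ with $i\le\dim X$ are nonzero in positive degree. Negative indices can occur, but the classes are then multiples of Lazard coefficients. Hence all relevant expressions are finite.

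\textbf{Step 1 ($S\subseteq C$).} Here we compute $s_i(E)$ directly. Set $V=(E\oplus O_X^{\oplus l})^\vee$ and $p:\bbP(V)\ra X$, and write $\xi=c_1(O(1))$. By the projective bundle formula (A3), $\xi$ satisfies $\sum_j(-1)^jc_j(V)\xi^{N-j}=0$, where $N=r+l$. Moreover $c_j(V)$ is a polynomial in the $c_j(E)$ with $\bbL^*$-coefficients. To see this, use the splitting principle: the dual roots are the formal inverses $[-1]_F\alpha_i$, which are power series in $\alpha_i$ with $\bbL^*$-coefficients, and symmetric functions of these are power series in the $c_j(E)$. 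These series truncate because of nilpotence in $\Omega^*(X)$ (pulled back injectively to the splitting variety). It therefore suffices to show that $p_*(\xi^m)$ lies in the $\bbL^*$-algebra generated by the Chern classes of $V$.

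Reducing $\xi^m$ modulo the relation expresses it as $\sum_{k<N}a_k\xi^k$ with $a_k\in C$. The projection formula then gives $p_*(\xi^m)=\sum_k a_k\,p_*(\xi^k)$. It remains to show that $p_*(\xi^k)\in C$ for $k<N$. This is the key point. The classes $p_*(\xi^k)$ for $k<N-1$ are not zero as in Chow; they are governed by the universal formula for pushforward from projective bundles (Quillen's formula). That formula writes $p_*(\xi^k)$ as a residue, namely the sum over roots $\beta_i$ of $V^\vee$-type of $\beta_i^k\big/\prod_{j\ne i}F(\beta_j,[-1]\beta_i)$, which is symmetric in the roots. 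I would prove this by the splitting principle together with the standard localization computation for $\bbP$ of a split bundle, or cite Levine--Morel's computation of $[\bbP^n]$ and its relative version. Symmetry then places $p_*(\xi^k)$ in $\bbL^*[c_1(E),\dots,c_r(E)]$, giving $s_i(E)\in C$. I expect this pushforward formula to be the main obstacle.

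\textbf{Step 2 ($C\subseteq S$).} We argue by triangularity. By Step 1, $s_{i}(E)=\pm c_1$-type expressions. More precisely, modulo $\bbL_{>0}$ we recover the Chow identity $c(E)s(E)=1$, so
\begin{equation*}
s_i(E)=P_i(c_1(E),\dots,c_i(E))+(-1)^ic_i(E)\text{-leading term}+\sum \lambda\cdot(\text{monomials of lower Chow degree}),
\end{equation*}
where the coefficient of $c_i(E)$ is $\pm1$ plus decomposables. Concretely, we induct on $i$ to show $c_i(E)\in S$. Assume $c_j(E)\in S$ for $j<i$. The formula from Step 1 writes $s_i(E)=\pm c_i(E)+Q$. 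Here $Q$ is a polynomial in $c_1,\dots,c_{i-1}$ together with terms $\lambda\cdot m$, where $\lambda\in\bbL_{>0}$ and $m$ is a monomial in the $c_j$ of cohomological degree $<i$, so $m$ involves only $c_j$ with $j<i$. Degree bookkeeping ($\deg\lambda<0$) guarantees that $c_i$ itself cannot appear multiplied by a positive-dimensional Lazard element. Thus $c_i(E)=\pm(s_i(E)-Q)\in S$. If pinning down the exact leading coefficient is awkward, one can instead run the argument of Lemma~\ref{lemma: generating Chow ring implies generating Omega} restricted to degrees: compare with $\CH^*$ via $\theta$, and use that the kernel is $\Omega^*(X)\times\bbL_{>0}$ with lower-dimensional factors expressed inside $S$ inductively.
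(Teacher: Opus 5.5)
\textbf{Step 2 has a genuine gap: the degree bookkeeping runs the wrong way.} Since $\bbL_{>0}=\bbL^{<0}$ sits in negative cohomological degree, a correction term $\lambda\cdot m\in\Omega^i(X)$ with $\lambda\in\bbL_c$, $c>0$, forces $m$ to have cohomological degree $i+c>i$, not $<i$. Such terms can therefore contain $c_i(E)$ multiplied by a positive-degree class, and also $c_j(E)$ with $j>i$.

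Rank one already shows this. Taking $l=0$ in the definition gives
$s_1(L)=c_1(L^\vee)=[-1]_F(c_1(L))=-c_1(L)+a_{1,1}c_1(L)^2+\cdots$,
where $a_{1,1}\in\bbL^{-1}$ is the nonzero coefficient of $uv$ in $F$. So your decomposition $s_i(E)=\pm c_i(E)+Q$, with $Q$ involving only $c_1,\dots,c_{i-1}$, is false. The ascending induction on $i$ is circular: to conclude $c_i\in S$ you would already need the correction terms, which involve $c_j$ for $j\ge i$, to lie in $S$. Inverting the relation requires a power-series inversion, i.e.\ a \emph{descending} induction on degree that terminates because $\Omega^{>\dim X}(X)=0$.

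Your fallback does not fix this as stated. On a general $X$, $\ker\theta(X)=\Omega^*(X)\times\bbL_{>0}$ consists of $\bbL_{>0}$-combinations of \emph{arbitrary} classes in $\Omega_{<d}(X)$, and nothing places those in $S$; for $E$ trivial, $S$ is just the image of $\bbL^*$. Lemma~\ref{lemma: generating Chow ring implies generating Omega} needs the images under $\theta$ to generate all of $\CH^*(X)$, which fails in general. This is exactly why the paper proceeds as follows:
\begin{enumerate}
\item Jouanolou's trick makes $E$ globally generated.
\item Then $E$ is pulled back from a Grassmannian $B$.
\item On $B$, both $\{\theta(c_i(T))\}$ and $\{\theta(s_i(T))\}$ generate $\CH^*(B)$, so the lemma gives $C(B)=\Omega^*(B)=S(B)$.
\item Pulling back gives $C=S$ on $X$.
\end{enumerate}
This yields both inclusions at once. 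It makes your Step 1, and with it Quillen's formula (a nontrivial input from outside the paper's toolkit), unnecessary.

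If you prefer to keep your route, it can be repaired using Step 1's output. Step 1 gives universal polynomials $s_i=R_i(c_1,\dots,c_r)$ with $R_i\in\bbL^*[x_1,\dots,x_r]$ reducing modulo $\bbL_{>0}$ to the Chow formula. For a monomial $c^\mu$, take the integral Chow inversion polynomial $U_\mu$, so that $U_\mu(\bar R_1(x),\bar R_2(x),\dots)=x^\mu$. Then $U_\mu(s_1,s_2,\dots)-c^\mu$ is an $\bbL_{>0}$-linear combination of monomials $c^\nu$ with $|\nu|>|\mu|$. A descending induction on $|\mu|$ then puts every $c^\mu$ in $S$.
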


\begin{proof}
    By Jouanolou's trick \cite[Lemma 1.5]{Jouanolou1973}, there exists a vector bundle $F\ra X$ together with an $F$-torsor $f:V\ra X$ such that $V$ is affine. Since the pullback $f^*:\Omega^*(X)\ra\Omega^*(V)$ is an isomorphism by the extented homotopy property (see Definition \ref{definition of an oriented cohomology theory over Sm_C}), it suffices, by naturality of Segre and Chern classes, to prove the proposition for the vector bundle $f^*E\ra V$. As $V$ is affine, the bundle~$f^*E$ is globally generated. Therefore, we may assume that the bundle $E\ra X$ is globally generated.

    Let $C(E)$ and $S(E)$ be the $\bbL^*$-subalgebras of $\Omega^*(X)$ generated by the Chern classes and the Segre classes of $E$ respectively. Then it suffices to show that $C(E)=S(E)$.

    Suppose that the rank of the bundle $E$ is $r$. Since $E$ is globally generated, there exists a classifying morphism $\phi:X\ra \Gr(r,N)=B$ for some integer $N\gg 0$, such that $E\cong \phi^*T$, where $T\ra B$ is the tautological bundle over $B$. Since~$c(E)=\phi^*c(T)$ and $s(E)=\phi^*s(T)$, we have $C(X)=\phi^* C(B)$ and $S(X)=\phi^*S(B)$. Thus it suffices to prove the statement for the tautological bundle $T\ra B$. 

    We denote by $Q\ra B$ the universal quotient bundle over $B$. By Proposition \ref{proposition: morphism of oriented cohomology theories preserves Chern classes and Segre classes}, the natural morphism $\theta(B):\Omega^*(B)\ra \CH^*(B)$ maps $c_i(T)$ to $c^{\CH}_i(T)$ and $s_i(T)$ to $s^{\CH}_i(T)$, and similarly for the Chern classes and Segre classes of the bundle $Q\ra B$. Here and in the sequel, we omit the superscript $\Omega$ when referring to Chern classes and Segree classes in algebraic cobordism.

    Since the Chow ring of $B$ is given by 
    \begin{equation*}
        \CH^*(B) \cong \frac{\bbZ[c^{\CH}_i(T), c^{\CH}_j(Q)]}{(c^{\CH}(T)\cdot c^{\CH}(Q)-1)},
    \end{equation*} 
    the two sets of classes $\{\theta(c_i(T))=c^{\CH}_i(T),\ i\in\bbZ\}$ and $\{\theta(s_i(T))=s^{\CH}_i(T)=c^{\CH}_i(Q),\ i\in\bbZ\}$ both generate $\CH^*(B)$ as a ring. So by Lemma \ref{lemma: generating Chow ring implies generating Omega}, both sets $\{c_i(T),\ i\in\bbZ\}$ and $\{s_i(T),\ i\in\bbZ\}$ generate~$\Omega^*(B)$ as an $\bbL^*$-algebra. Therefore, we have $C(B)=\Omega^*(B)=S(B)$, which completes the proof.
\end{proof}

\section{Pushforwards of products of line bundles}
\label{section: Pushforwards of products of line bundles}

\subsection{Some subgroups of algebraic cobordism}
\label{subsection: Some subgroups of algebraic cobordism}

In this section, we begin by defining several subgroups and subrings of algebraic cobordism from first Chern classes of line bundles. Then we prove some basic properties of them, especially their behavior under certain pullbacks and pushforwards.

\begin{Def}
\label{definition of D, AD, f_*D, f_*VAD, s_*D}
    Let $X$ be a smooth projective variety. We define $\Omega^*(X)_\rmD$ to be the subring of~$\Omega^*(X)$ generated by the first Chern classes of line bundles on $X$, $i.e.$, elements in $\Omega^*(X)_\rmD$ are linear combinations of classes of the form $\prod_{i=1}^s c_1(L_i)$ where $L_i\ra X$ are line bundles over $X$. 
    
    Similarly, we define $\Omega^*(X)_{\rmVAD}$ to be the subring of $\Omega^*(X)$ generated by first Chern classes of very ample line bundles on $X$, $i.e.$, we further require $L_i\ra X$ to be very ample.

    We define $\Omega^*(X)_{\fl_*\rmD}$ to be the subgroup of $\Omega^*(X)$ generated by elements of the form $\phi_*\alpha$, where~$\phi:Y\ra X$ is a flat morphism from a smooth projective variety $Y$ to $X$, and $\alpha\in\Omega^*(Y)_\rmD$. We define $\Omega^*(X)_{\rmsm_*\rmD}$ similarly by requiring the morphism $\phi$ to be smooth, and define $\Omega^*(X)_{\fl_*\rmVAD}$ similarly by requiring $\alpha\in\Omega^*(Y)_{\rmVAD}$.
\end{Def}

Let $X$ be a smooth projective variety, and let $D$ be a Cartier divisor on $X$. We denote by~$[D]_\Omega$ the class $c_1(O_X(D))\in\Omega^1(X)_\rmD$, where $O_X(D)\ra X$ is the line bundle associated to $D$. In this way, we obtain a map 
\begin{equation*}
    [\ ]_\Omega:\Pic(X)\ra\Omega^1(X)_\rmD.
\end{equation*}
However, this map is not a group morphism since the map $c_1:\Pic(X)\ra\Omega^1(X)$ is not. 

By the naturality of Chern classes, we have $\phi^*(\Omega^*(X)_\rmD)\subseteq \Omega^*(Y)_\rmD$ for any morphism $\phi:Y\ra X$ in $\Sm_k$. In what follows, we study the properties of $\Omega^*(X)_{\rmsm_*\rmD}$ and $\Omega^*(X)_{\fl_*\rmD}$.

\begin{prop}
\label{proposition: s_*D is a L_*-algebra and f_*D is a s_*D-module}
    Let $X$ be a smooth projective variety over $k$, then 
    \begin{enumerate}[itemsep=0em]
        \item $\Omega^*(X)_{\rmsm_*\rmD}$ is an $\bbL^*$-algebra,
        \item $\Omega^*(X)_{\fl_*\rmD}$ is a $\Omega^*(X)_{\rmsm_*\rmD}$-module. 
    \end{enumerate}
\end{prop}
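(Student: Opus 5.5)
The plan is to work directly with generators $\phi_*\alpha$, where $\phi:Y\ra X$ is smooth (respectively flat) and $\alpha\in\Omega^*(Y)_\rmD$. Closure under products will come from fibre products, and the $\bbL^*$-action from products with a point-like factor.

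\textbf{Part (1).} First, $1_X=\Id_{X*}(1_X)$ lies in $\Omega^*(X)_{\rmsm_*\rmD}$. Next, take two generators $\phi_*\alpha$ and $\psi_*\beta$ with $\phi:Y\ra X$ and $\psi:Y'\ra X$ smooth. Form $W=Y\times_X Y'$, with projections $\psi':W\ra Y$ and $\phi':W\ra Y'$. Then $W$ is smooth projective, $\phi\circ\psi':W\ra X$ is smooth, and smooth morphisms are transverse to everything.

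Applying transverse commutativity (A2) and the projection formula gives
\[
\phi_*\alpha\cdot\psi_*\beta=\phi_*(\alpha\cdot\phi^*\psi_*\beta)=\phi_*(\alpha\cdot\psi'_*\phi'^*\beta)=(\phi\circ\psi')_*(\psi'^*\alpha\cdot\phi'^*\beta).
\]
Pullbacks preserve $\Omega^*_\rmD$, so $\psi'^*\alpha\cdot\phi'^*\beta\in\Omega^*(W)_\rmD$. Hence $\Omega^*(X)_{\rmsm_*\rmD}$ is closed under multiplication, and it is a subring.

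For the $\bbL^*$-action, $\bbL^*=\Omega^*(k)$ is generated as a group by classes $[Z/k]$ with $Z$ smooth projective. Multiplication by $[Z/k]$ acts on $\phi_*\alpha$ via the external product. Concretely, $[Z/k]\cdot\phi_*\alpha=(\phi\circ pr_Y)_*(pr_Y^*\alpha)$, where $pr_Y:Y\times Z\ra Y$ is the projection. This follows from the projection formula together with the compatibility of external products with pushforwards: the class is $\pi_{X*}(pr_Z^*\pi_Z^*\ldots)$, or more simply $[Z/k]\cdot\gamma=pr_{X*}pr_X^*\gamma$ for $pr_X:X\times Z\ra X$. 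Smooth base change then gives $pr_X^*\phi_*\alpha=(\phi\times\Id)_*pr_Y^*\alpha$. Since $\phi\times\Id$ followed by $pr_X$ is smooth and $pr_Y^*\alpha\in\Omega^*(Y\times Z)_\rmD$, the product lies in the subgroup. So $\Omega^*(X)_{\rmsm_*\rmD}$ is an $\bbL^*$-subalgebra.

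\textbf{Part (2).} The same fibre-product computation handles $\phi$ flat and $\psi$ smooth. Now $W=Y\times_X Y'$ is smooth because $\phi':W\ra Y$ is a base change of the smooth map $\psi$ and $Y$ is smooth. The square is transverse because $\psi$ is smooth, so (A2) applies to $\phi^*\psi_*\beta=\psi'_*\phi'^*\beta$. Note that here the base change along $\psi$ is what matters; the roles of the maps must be arranged so that the pushforward is along the smooth map $\psi$ and the pullback is along $\phi$.

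The composite $W\ra Y\ra X$ is the composition of the smooth $\psi'$ with the flat $\phi$, so it is flat. The same formula therefore expresses $\psi_*\beta\cdot\phi_*\alpha$ as a flat pushforward of a class in $\Omega^*(W)_\rmD$, and bilinearity gives the module structure.

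\textbf{Main obstacle.} The delicate point is justifying (A2) in each case: checking that the relevant squares are transverse in Levine–Morel's sense and that the fibre products lie in $\Sm_k$ and are projective. Smoothness of one of the two maps makes both conditions automatic, and that is exactly why the statement pairs the flat subgroup with the smooth subalgebra.
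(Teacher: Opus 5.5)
Your proof is correct and follows essentially the same argument as the paper. You close both subgroups under products via the fibre square, using the projection formula and transverse commutativity (A2). You get the $\bbL^*$-structure from $\pi_X^*[Z/k]=[pr_X\colon Z\times X\ra X]$ being a smooth pushforward of $1$; the paper just notes that this class lies in the subring, where you recompute the action on generators.

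Two harmless slips. Your remark that the maps ``must be arranged'' so the pushforward is along the smooth map is not needed: transversality only requires one map to be smooth, and the paper actually pulls back along the smooth map and pushes forward along the flat one, with the composite still flat. Also, in Part (2) the map $W\ra Y$ is $\psi'$, not $\phi'$.
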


\begin{proof}
    First, we show that $\Omega^*(X)_{\rmsm_*\rmD}$ is a subring of $\Omega^*(X)$ and that $\Omega^*(X)_{\fl_*\rmD}$ is a module over it simultaneously. 
    Let $\alpha\in\Omega^*(X)_{\rmsm_*\rmD}$, and $\beta\in\Omega^*(X)_{\fl_*\rmD}$ (respectively $\Omega^*(X)_{\rmsm_*\rmD}$). By definition, we have $\alpha=f_*\alpha'$ for some smooth morphism $f:Y\ra X$ from $Y$ smooth and projective, and~$\alpha'\in\Omega^*(Y)_\rmD$. We have $\beta=g_*\beta'$ for some flat (respectively smooth) morphism $g:Z\ra X$ from~$Z$ smooth and projective, and $\beta'\in\Omega^*(Z)_\rmD$. We consider the Cartesian square formed by $f$ and $g$.
    \begin{equation*}
        \begin{tikzcd}
            W \ar[r,"f'"] \ar[d,"g'"'] \ar[dr, phantom, "\square"] & Z \ar[d,"g"]\\
            Y \ar[r,"f"'] & X
        \end{tikzcd}
    \end{equation*}
    Since $f$ is smooth, the morphisms $f$ and $g$ are transverse, and we have 
    \begin{equation}
    \label{eq: alpha cdot beta}
        \alpha\cdot\beta = f_*\alpha'\cdot g_*\beta' = f_*(\alpha'\cdot f^*g_*\beta') = f_*(\alpha'\cdot g'_*f'^*\beta') = f_*g'_*(g'^*\alpha'\cdot f'^*\beta').
    \end{equation}
    Here, the second and the last equalities follow from the projection formula, while the third follows from the transverse commutativity.

    Since $\alpha'\in\Omega^*(Y)_\rmD$, we have $g'^*\alpha'\in\Omega^*(W)_\rmD$, and similarly for $f'^*\beta'$ and their products. Meanwhile, the morphism $f\circ g'$ is flat (respectively smooth) since $f$ is smooth and $g'$ is flat (respectively smooth). Therefore, the element $\alpha\cdot\beta$ is in $\Omega^*(X)_{\fl_*\rmD}$ (respectively $\Omega^*(X)_{\rmsm_*\rmD}$) by (\ref{eq: alpha cdot beta}).

    So we have proved that $\Omega^*(X)_{\rmsm_*\rmD}$ is a subring of $\Omega^*(X)$, and $\Omega^*(X)_{\fl_*\rmD}$ is a module over it. To conclude, we prove that the image of $\pi_X^*:\bbL^*\ra\Omega^*(X)$ actually lies in $\Omega^*(X)_{\rmsm_*\rmD}$. Let~$a\in\bbL^*$. We may assume that $a=[Z/k]$ is represented by a smooth projective variety~$Z$. Then $\pi_X^*(a)$ is represented by the projection $[pr_X:Z\times X\ra X]$, hence equal to $pr_{X*}(1_{Z\times X})$. Since $pr_X$ is smooth and $1_{Z\times X}$ is in $\Omega^*(Z\times X)_\rmD$, the class $\pi_X^*(a)$ is in $\Omega^*(X)_{\rmsm_*\rmD}$ by definition.
\end{proof}

\begin{rem}
\label{remark: f_*D is a L_*-module and an Omega^*(X)_D-module}
    Let $X$ be a smooth projective variety. Then $\Omega^*(X)_{\fl_*\rmD}$ is naturally an $\bbL^*$-module and an $\Omega^*(X)_\rmD$-module, since both $\bbL^*$ and $\Omega^*(X)_\rmD$ admit natural morphisms to $\Omega^*(X)_{\rmsm_*\rmD}$.
\end{rem}

The group $\Omega^*(X)_{\fl_*\rmD}$ satisfies the following functorial properties.

\begin{prop}
\label{proposition: stability results for f_*D}
    Let $\phi:Y\ra X$ be a morphism between smooth and projective varieties.
    \begin{enumerate}[itemsep=0em]
        \item If $\phi$ is flat, then
            \begin{equation*}
                \phi_*(\Omega^*(Y)_{\fl_*\rmD})\subseteq\Omega^*(X)_{\fl_*\rmD}.
            \end{equation*}

        \item If $\phi$ is smooth, then
            \begin{equation*}
                \phi^*(\Omega^*(X)_{\fl_*\rmD})\subseteq \Omega^*(Y)_{\fl_*\rmD}.
            \end{equation*}
    \end{enumerate}
\end{prop}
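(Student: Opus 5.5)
Both statements reduce to checking them on generators of $\Omega^*(Y)_{\fl_*\rmD}$ and $\Omega^*(X)_{\fl_*\rmD}$, using functoriality of pushforwards, transverse commutativity (A2), and the naturality of first Chern classes. Since $\phi_*$ and $\phi^*$ are additive, it suffices to treat a single generator $\psi_*\alpha$. Here $\psi$ is a flat morphism from a smooth projective variety, and $\alpha$ lies in the relevant $\Omega^*(-)_\rmD$.

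For (1), take a generator $\psi_*\alpha$ of $\Omega^*(Y)_{\fl_*\rmD}$, with $\psi:Z\ra Y$ flat, $Z$ smooth projective, and $\alpha\in\Omega^*(Z)_\rmD$. By (A1) we have
\begin{equation*}
\phi_*\psi_*\alpha=(\phi\circ\psi)_*\alpha .
\end{equation*}
The composition $\phi\circ\psi:Z\ra X$ of two flat morphisms is flat, and $Z$ is smooth and projective. Hence $(\phi\circ\psi)_*\alpha$ is by definition a generator of $\Omega^*(X)_{\fl_*\rmD}$. Note that $\phi$ is projective, since $Y$ is projective, so the pushforward makes sense.

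For (2), take a generator $\psi_*\alpha$ of $\Omega^*(X)_{\fl_*\rmD}$ with $\psi:Z\ra X$ flat and $\alpha\in\Omega^*(Z)_\rmD$. Form the Cartesian square
\begin{equation*}
\begin{tikzcd}
W \ar[r,"\phi'"] \ar[d,"\psi'"'] \ar[dr,phantom,"\square"] & Z \ar[d,"\psi"]\\
Y \ar[r,"\phi"'] & X
\end{tikzcd}
\end{equation*}
with $W=Y\times_X Z$. The key points are the following.
\begin{enumerate}
\item Since $\phi$ is smooth, its base change $\phi':W\ra Z$ is smooth, so $W$ is smooth because $Z$ is. Also $W$ is projective, being closed in $Y\times Z$.
\item The morphism $\psi':W\ra Y$ is flat as a base change of the flat morphism $\psi$.
\item Because $\phi$ is smooth, $\phi$ and $\psi$ are transverse, exactly as in the proof of Proposition~\ref{proposition: s_*D is a L_*-algebra and f_*D is a s_*D-module}.
\end{enumerate}
Transverse commutativity (A2) then gives
\begin{equation*}
\phi^*\psi_*\alpha=\psi'_*\phi'^*\alpha .
\end{equation*}
Finally, $\phi'^*\alpha$ lies in $\Omega^*(W)_\rmD$, because pullback preserves the subring generated by first Chern classes of line bundles: $\phi'^*c_1(L)=c_1(\phi'^*L)$, and $\phi'^*$ is a ring map. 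So $\phi^*\psi_*\alpha$ is a generator of $\Omega^*(Y)_{\fl_*\rmD}$.

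The only point needing any care is in (2): justifying that $W$ is a smooth projective variety (not just a smooth projective scheme) and that the transversality hypothesis of (A2) holds. If $W$ is disconnected, I would treat each connected component separately, each being a smooth projective variety, and write the pushforward as the sum over components. If "variety" is meant to require irreducibility, I would apply the same component decomposition. Everything else is formal.
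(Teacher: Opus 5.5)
Your proof is correct and follows essentially the same argument as the paper: (1) is immediate because $\phi\circ\psi$ is flat, and (2) uses the same Cartesian square with transverse commutativity, the naturality $\phi'^*c_1(L)=c_1(\phi'^*L)$, and flatness of the base change $\psi'$. Your extra care about $W$ being smooth and projective, and about splitting into connected components if it is disconnected, is a detail the paper leaves implicit; it does not change the argument.
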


\begin{proof}
    The first statement holds by definition. We will prove the second one in the following.

    Let $\alpha\in\Omega^*(X)_{\fl_*\rmD}$. Without loss of generality, we may assume that $\alpha = f_*\alpha'$ for some flat morphism $f:Z\ra X$ from a smooth projective variety $Z$, and $\alpha'\in\Omega^*(Z)_\rmD$. Since $\phi$ is smooth, the morphisms $f$ and $\phi$ are transverse. We consider the Cartesian square formed by $f$ and $\phi$.
    \begin{equation*}
        \begin{tikzcd}
            W \ar[r,"\phi'"] \ar[d,"f'"'] \ar[dr,phantom,"\square"] & Z \ar[d,"f"]\\
            Y \ar[r,"\phi"']& X
        \end{tikzcd}
    \end{equation*}
    By transverse commutativity, we have 
    \begin{equation}
    \label{eq: phi^*alpha=f'_*pfi'^*alpha'}
        \phi^*\alpha=\phi^*f_*\alpha'=f'_*\phi'^*\alpha'.
    \end{equation}

    Since $\alpha'\in\Omega^*(Z)_\rmD$, the class $\phi'^*\alpha'$ is in $\Omega^*(W)_\rmD$. Meanwhile, the morphism $f'$ is flat since $f$ is flat. Therefore, we obtain $\phi^*\alpha\in\Omega^*(Y)_{\fl_*\rmD}$ by (\ref{eq: phi^*alpha=f'_*pfi'^*alpha'}).
\end{proof}

\subsection{Chern classes revisited}
\label{subsection: Chern classes revisited}

We prove in this section that Chern classes in algebraic cobordism actually lie in subgroups $\Omega^*(X)_{\rmsm_*\rmD}$ defined before. We also present some technical lemmas that will be useful in the following sections.

\begin{prop}
\label{proposition: Chern classes belong to s_*D}
    Let $X$ be a smooth projective variety, and $E\ra X$ be a vector bundle over $X$. Then all Chern classes of $E$ belong to $\Omega^*(X)_{\rmsm_*\rmD}$.
\end{prop}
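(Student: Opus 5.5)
The plan is to reduce everything to Segre classes and then use Proposition \ref{proposition: L_*-subalgebras generated by Chern classes and Segre classes are the same}. First, $\Omega^*(X)_{\rmsm_*\rmD}$ is an $\bbL^*$-subalgebra of $\Omega^*(X)$ by Proposition \ref{proposition: s_*D is a L_*-algebra and f_*D is a s_*D-module}(1). That proposition says the $\bbL^*$-subalgebra generated by the $c_i(E)$ equals the one generated by the $s_i(E)$. So it suffices to show that every Segre class $s_i(E)$ lies in $\Omega^*(X)_{\rmsm_*\rmD}$. Then the whole $\bbL^*$-subalgebra they generate, which contains every $c_i(E)$, lies in $\Omega^*(X)_{\rmsm_*\rmD}$.

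To show this, unwind the definition. Fix $i\in\bbZ$ and choose $l\geq\max(0,1-i-r)$. Set $P\coloneqq\bbP((E\oplus O_X^{\oplus l})^\vee)$ with projection $p:P\ra X$. Then by definition
\begin{equation*}
    s_i(E)=p_*\bigl(c_1(O(1))^{i+r+l-1}\bigr).
\end{equation*}
The exponent $i+r+l-1$ is nonnegative by the choice of $l$, so $c_1(O(1))^{i+r+l-1}$ is a product of first Chern classes of line bundles on $P$. Hence it lies in $\Omega^*(P)_\rmD$.

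It remains to check the hypotheses in Definition \ref{definition of D, AD, f_*D, f_*VAD, s_*D}. The map $p$ is a projective bundle, hence smooth and projective, and $P$ is a smooth projective variety because $X$ is. Therefore $s_i(E)=p_*(\text{element of }\Omega^*(P)_\rmD)$ belongs to $\Omega^*(X)_{\rmsm_*\rmD}$ by definition.

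I do not expect a real obstacle here. The substantive input, namely that Chern classes are $\bbL^*$-polynomials in Segre classes even though $c(E)s(E)\neq 1$ in cobordism, is already packaged in Proposition \ref{proposition: L_*-subalgebras generated by Chern classes and Segre classes are the same}. The only points to take care with are the nonnegativity of the exponent and the projectivity of $P$ over $k$.
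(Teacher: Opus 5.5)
Your proposal is correct and matches the paper's proof: the Segre classes lie in $\Omega^*(X)_{\rmsm_*\rmD}$ by definition, and Proposition~\ref{proposition: L_*-subalgebras generated by Chern classes and Segre classes are the same} combined with the $\bbL^*$-algebra structure from Proposition~\ref{proposition: s_*D is a L_*-algebra and f_*D is a s_*D-module} then gives the Chern classes. Your extra checks spell out the ``by definition'' step that the paper leaves implicit: the exponent $i+r+l-1$ is nonnegative, the exponent-zero case gives $1_P\in\Omega^*(P)_\rmD$, and $p$ is smooth and projective.
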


\begin{proof}
    All Segre classes belong to $\Omega^*(X)_{\rmsm_*\rmD}$ by definition. By Proposition \ref{proposition: L_*-subalgebras generated by Chern classes and Segre classes are the same}, every Chern class can be written as a polynomial of Segre classes with $\bbL^*$-coefficients. Therefore, they also belong to~$\Omega^*(X)_{\rmsm_*\rmD}$ since it is an $\bbL^*$-algebra by Proposition \ref{proposition: s_*D is a L_*-algebra and f_*D is a s_*D-module}.
\end{proof}

\begin{lem}
\label{lemma: c_r(E otimes L)}
    Let $X\in\Sm_k$. Let $E\ra X$ be a vector bundle of rank $r$, and $L\ra X$ be a line bundle. Then we have
    \begin{equation}
    \label{eq: c_r(E otimes L)}
        c_r(E\otimes L) = c_r(E)+\sum_{s\geq 1}Q_s(c_i(E),1\leq i \leq r)\cdot c_1(L)^s\ \text{in}\ \Omega^r(X)
    \end{equation}
   for some formal power series $Q_s(x_1,\cdots,x_r)\in\bbL^*[[x_1,\cdots,x_r]]$, which only depend on $r$ and $s$, not on~$X$,~$L$ or $E$.
\end{lem}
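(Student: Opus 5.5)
We plan to reduce to Chern roots via the splitting principle and then expand the formal group law. Choose a smooth morphism $f:X'\ra X$ with $f^*$ injective on $\Omega^*$ and $f^*E\cong L_1\oplus\cdots\oplus L_r$, and write $\alpha_i=c_1(L_i)$ and $\beta=c_1(f^*L)$. Then $f^*E\otimes f^*L\cong\bigoplus_i L_i\otimes f^*L$, so by the Whitney formula and the defining property of the formal group law $F$ we have
\begin{equation*}
    f^*c_r(E\otimes L)=\prod_{i=1}^r F(\alpha_i,\beta) \text{\ in\ } \Omega^r(X').
\end{equation*}
All these power series are finite sums, since $\alpha_i$ and $\beta$ are nilpotent (their powers beyond $\dim(X')$ vanish).

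Next we expand. Write $F(u,v)=u+v\cdot G(u,v)$ with $G(u,v)=1+\sum_{i,j\geq1}a_{i,j}u^iv^{j-1}\in\bbL^*[[u,v]]$. Then $\prod_i F(u_i,v)$ is a formal power series in $\bbL^*[[u_1,\dots,u_r,v]]$, symmetric in the $u_i$. Expanding in powers of $v$ gives
\begin{equation*}
    \prod_i F(u_i,v)=u_1\cdots u_r+\sum_{s\geq1}P_s(u_1,\dots,u_r)\,v^s,
\end{equation*}
where each $P_s$ is a symmetric power series with $\bbL^*$ coefficients and depends only on $r$ and $s$. By the fundamental theorem on symmetric functions, applied degree by degree (each homogeneous component in the $u$'s is a symmetric polynomial with $\bbL^*$ coefficients), we can write $P_s(u)=Q_s(e_1(u),\dots,e_r(u))$ with $Q_s\in\bbL^*[[x_1,\dots,x_r]]$. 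This step needs a little care. Since $e_i$ has degree $i\geq1$ in the $u$'s, a power series in the $e_i$ assembled from homogeneous pieces converges formally, so the rewriting is legitimate.

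To conclude, we substitute $u_i=\alpha_i$ and $v=\beta$. Since $e_i(\alpha)=f^*c_i(E)$, $\prod_i\alpha_i=f^*c_r(E)$, and $\beta=f^*c_1(L)$, we obtain
\begin{equation*}
    f^*c_r(E\otimes L)=f^*\Bigl(c_r(E)+\sum_{s\geq1}Q_s(c_i(E))\,c_1(L)^s\Bigr).
\end{equation*}
Here $Q_s(c_i(E))$ makes sense in $\Omega^*(X)$ because the $c_i(E)$ with $i\geq1$ are nilpotent (they lie in positive degree and $\Omega^{>\dim X}(X)=0$), and $f^*$ commutes with evaluating these finite sums. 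Injectivity of $f^*$ then yields the claimed identity.

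The main obstacle is the bookkeeping for formal power series. We need the substitution of nilpotent classes to be well defined and compatible with $f^*$, and we need the symmetric-function rewriting to take place over $\bbL^*$ in the completed setting. Handling this homogeneous degree by degree in the $u$-variables should suffice.
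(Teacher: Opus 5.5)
Your proposal is correct and follows essentially the same route as the paper. Both arguments pass to Chern roots, expand $\prod_i(\alpha_i+_\Omega\beta)$ in powers of $\beta=c_1(L)$, and rewrite each symmetric coefficient $P_s$ as a power series in the elementary symmetric polynomials. You are more explicit than the paper about the injectivity of $f^*$ and about why substituting nilpotent classes into these power series is well defined.
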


\begin{proof}
    Suppose that the Chern roots of $E$ are $\{\alpha_i,\ 1\leq i\leq r\}$, and $c_1(L)=\beta$. Then the Chern roots of $E\otimes L$ are given by $\{\alpha_i+_{\Omega}\beta, 1\leq i\leq r\}$. So we have 
    \begin{equation*}
        c_r(E\otimes L)=\prod_{i=1}^r(\alpha_i+_{\Omega}\beta)=\prod_{i=1}^r(\alpha_i+\beta+\sum_{j,k\geq 1}a_{j,k}\alpha_i^j\beta^k)=\prod_{i=1}^r\alpha_i + \sum_{s\geq 1}P_s(\alpha_i,1\leq i\leq r)\cdot\beta^s,
    \end{equation*}
    where $P_s(x_1,\cdots, x_r)\in\bbL^*[[x_1,\cdots, x_r]]$ are formal power series which only depend on $s$ and $r$. Since each $P_s$ is symmetric in its $r$ variables, it can be expressed as a formal power series in the elementary symmetric polynomials of these variables. Therefore, there exists formal power series~$Q_s(x_1,\cdots,x_r)\in\bbL^*[[x_1,\cdots,x_r]]$ such that 
    \begin{equation*}
        c_r(E\otimes L)=c_r(E)+\sum_{s\geq 1}Q_s(c_i(E),1\leq i \leq r)\cdot c_1(L)^s,
    \end{equation*}
    which finishes the proof.
\end{proof}

\begin{rem}
\label{remark: Q_s(E)}
    Since we have $\Omega^i(X)=\Omega_{\dim(X)-i}(X)=0$ when $i>\dim(X)$, the infinite sum on $s$ in the right hand side of equation (\ref{eq: c_r(E otimes L)}) is actually finite, and the formal power series $Q_s$ become polynomials when applied to Chern classes of $E$. The classes obtained in this way will be denoted by $Q_s(E)\in\Omega^*(X)$.
\end{rem}

Let $X\in\Sm_k$ and let $E\ra X$ be a vector bundle of rank $r$. Let $p:\bbP(E)\ra X$ be the projectivization of $E\ra X$. Denote by $t$ the first Chern class $c_1(O(1))$ of the tautological quotient line bundle $O(1)\ra\bbP(E)$. Then we have
\begin{equation*}
    p_*(t^{r-1})=1_X\in\CH^0(X).
\end{equation*}
However, this no longer holds in algebraic cobordism. Instead, we have the following result.

\begin{prop}
\label{proposition: pushforward from projective bundle}
    Let $X$ be a smooth projective variety, and let $E\ra X$ be a vector bundle of rank~$r$. Let $p: \bbP(E) \ra X$ be the projectivization of the bundle $E$. We denote by $O(1)\ra \bbP(E)$ the tautological quotient line bundle over $\bbP(E)$. Let $t=c_1(O(1))\in \Omega^1(\bbP(E))$. Then
    \begin{equation}
    \label{eq: 1_X=c_r(E)s_-r(E)+sum over s with t}
        1_X = c_r(E^\vee)\cdot s_{-r}(E^\vee) + \sum_{s\geq 1}Q_s(E^\vee)\cdot p_*(t^{s-1})
    \end{equation}
    in $\Omega^0(X)$, where $Q_s(E^\vee)$ are the polynomials with $\bbL^*$ coefficients in the Chern classes of $E^\vee$ that appear in Remark~\ref{remark: Q_s(E)}.
\end{prop}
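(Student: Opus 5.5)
The idea is to realize $1_X$ as the pushforward of a section class from the auxiliary projective bundle $P'\coloneqq\bbP(E\oplus O_X)\xrightarrow{p'}X$, and then expand that class using Lemma \ref{lemma: c_r(E otimes L)}. Write $O(1)$ for the tautological quotient bundle on $P'$ and set $t'\coloneqq c_1(O(1))\in\Omega^1(P')$.

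First I identify $s_{-r}(E^\vee)$. In the definition of Segre classes take $i=-r$ and $l=1$, which is allowed since $1\geq\max(0,1-i-r)=1$. The relevant bundle is then $\bbP((E^\vee\oplus O_X)^\vee)=\bbP(E\oplus O_X)=P'$, and the exponent $i+r+l-1$ equals $0$. Hence
\begin{equation*}
    s_{-r}(E^\vee)=p'_*(1_{P'}).
\end{equation*}

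Second, I construct two geometric classes on $P'$.
\begin{enumerate}
\item The composition $p'^*O_X\hra p'^*(E\oplus O_X)\thra O(1)$ is a section of $O(1)$. Its zero locus is the set of quotients that kill the $O_X$ summand, namely the hypersurface $j:\bbP(E)\hra P'$. The restriction $O(1)|_{\bbP(E)}$ is the tautological bundle of $\bbP(E)$, so $j^*t'=t$. This section should be transverse (local coordinates check). The case $r=1$ of Proposition \ref{proposition: geometric interpretation of top Chern class} then gives $t'=j_*(1_{\bbP(E)})$.
\item Dualize the tautological map to get $O(-1)\hra p'^*(E^\vee\oplus O_X)$, and project to the $E^\vee$ factor. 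This gives a section of $p'^*E^\vee\otimes O(1)$, a bundle of rank $r$. It vanishes exactly where the quotient $E\oplus O_X\thra O(1)$ factors through the projection to $O_X$. That locus is the image of the section $\sigma:X\hra P'$ corresponding to the quotient $E\oplus O_X\ra O_X$.
\end{enumerate}
For the second construction I still need to check that the section is transverse. I would do this in a local trivialization, where $P'$ is $\bbP^r_X$ and the section is the affine chart coordinate vector; this is the main technical step, though routine. Given transversality, Proposition \ref{proposition: geometric interpretation of top Chern class} yields $\sigma_*(1_X)=c_r(p'^*E^\vee\otimes O(1))$.

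Third, I expand this Chern class by Lemma \ref{lemma: c_r(E otimes L)}, using $p'^*$-naturality of the $Q_s$:
\begin{equation*}
    \sigma_*(1_X)=p'^*c_r(E^\vee)+\sum_{s\geq 1}p'^*Q_s(E^\vee)\cdot t'^s.
\end{equation*}
Then I apply $p'_*$ to both sides.
\begin{itemize}
\item On the left, $p'\circ\sigma=\Id_X$, so functoriality (A1) gives $p'_*\sigma_*(1_X)=1_X$.
\item The first term on the right becomes $c_r(E^\vee)\cdot p'_*(1_{P'})=c_r(E^\vee)\,s_{-r}(E^\vee)$ by the projection formula and the first step.
\item For the sum, I write $t'^s=t'^{s-1}\cdot j_*(1)=j_*(j^*t'^{s-1})=j_*(t^{s-1})$. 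Since $p'\circ j=p$, this gives $p'_*(t'^s)=p_*(t^{s-1})$.
\end{itemize}
Applying the projection formula once more to move $Q_s(E^\vee)$ outside $p'_*$ yields exactly \eqref{eq: 1_X=c_r(E)s_-r(E)+sum over s with t}. The sum is finite by Remark \ref{remark: Q_s(E)}.

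The main obstacle is the careful identification of the zero loci and the transversality of the two sections. Particular care is needed with Grothendieck's quotient convention and the duals, to make sure the bundle is $E^\vee\otimes O(1)$ rather than $E\otimes O(-1)$. The remaining steps are formal consequences of (A1), (A2), the projection formula and the earlier propositions.
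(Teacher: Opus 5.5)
Your proposal is correct and follows essentially the same route as the paper. Both use the auxiliary bundle $\bbP(E\oplus O_X)$ with the rank-$r$ section of $O(1)\otimes p'^*E^\vee$ cutting out $\bbP(O_X)$; your dualized map $O(-1)\to p'^*E^\vee$ is just the transpose of the paper's $q^*E\to O(1)'$. Both then expand this class via Lemma~\ref{lemma: c_r(E otimes L)}, push forward, and use the hyperplane section $\bbP(E)$ with $j_*(1)=t'$ to rewrite $p'_*(t'^s)$ as $p_*(t^{s-1})$. Your explicit check that $l=1$ gives $s_{-r}(E^\vee)=p'_*(1_{P'})$ simply spells out what the paper cites as ``by the definition of Segre classes.''
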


\begin{proof}
    Denote by $q:\bbP(E\oplus O_X) \ra X$ the projectivization of the bundle $E\oplus O_X$ over $X$, and by~$O(1)'\ra \bbP(E\oplus O_X)$ the tautolgical quotient line bundle. The composition of the quotient morphism $q^*(E\oplus O_X)\ra O(1)'$ with the inclusion $q^*E\hra q^*(E\oplus O_X)$ defines a bundle morphism~$q^*E\ra O(1)'$, and hence a section of $O(1)'\otimes q^*E^\vee$ over $\bbP(E\oplus O_X)$. This section is transverse, with zero-locus given by $X\cong\bbP(O_X)\subseteq\bbP(E\oplus O_X)$. By Proposition~\ref{proposition: geometric interpretation of top Chern class} and Lemma~\ref{lemma: c_r(E otimes L)}, we obtain
    \begin{equation}
    \label{eq: [X hra P(E oplus O_X)]}
        [\bbP(O_X)\hra \bbP(E\oplus O_X)] = c_r(O(1)'\otimes q^*E^\vee) = c_r(q^*E^\vee)+\sum_{s\geq 1}Q_s(q^*E^\vee)\cdot t'^s,
    \end{equation}
    where $t'\coloneqq c_1(O(1)')\in\Omega^1(\bbP(E\oplus O_X))$.

    We pushforward equation (\ref{eq: [X hra P(E oplus O_X)]}) along $q$ and use the projection formula to obtain

    \begin{equation}
    \label{eq: temp}
        1_X = c_r(E^\vee)\cdot s_{-r}(E^\vee)+\sum_{s\geq 1}Q_s(E^\vee)\cdot q_*(t'^s).
    \end{equation}

    Here, we use the fact that $q_*(1)=s_{-r}(E^\vee)$ by the definition of Segre classes.

    We denote by $i:\bbP(E)\hra \bbP(E\oplus O_X)$ the morphism induced by the projection $E\oplus O_X\ra E$. Then $i^*O(1)'\cong O(1)$, and hence $i^*t' = t$. The composition of the inclusion $q^*O_X\hra q^*(E\oplus O_X)$ with the projection $q^*(E\oplus O_X)\ra O(1)'$ gives a transverse section of $O(1)'$ over $\bbP(E\oplus O_X)$, whose zero-locus can be identified with $\bbP(E)$. Therefore, we have $i_*(1)=t'$, and hence, for any $s\geq 1$,
    \begin{equation*}
        i_*(t^{s-1})=i_*(i^*t'^{s-1})= t'^{s-1}\cdot i_*(1)=t'^{s-1}\cdot t'=t'^s.
    \end{equation*}
    Since $p=q\circ i$, we have $q_*(t'^s)=q_*i_*(t^{s-1})=p_*(t^{s-1})$ for any $s\geq 1$. The equation (\ref{eq: temp}) becomes
    \begin{equation*}
        1_X = c_r(E^\vee)\cdot s_{-r}(E^\vee) + \sum_{s\geq 1}Q_s(E^\vee)\cdot p_*(t^{s-1}),
    \end{equation*}
    which finishes the proof.
\end{proof}

\subsection{Some stability results for $\Omega^*(X)_{\fl_*\rmD}$}
\label{subsection: some stability results for Omega_*(X)_{fl_*D}}
The goal of this section is to prove that $\Omega^*(X)_{\fl_*\rmD}$ is stable under the pushforwards along cbs blow-ups, which is Proposition \ref{proposition: f_*D is preserved under pushforward along cbs blow-up}. To establish this result, we start with the corresponding result for pushforwards along embeddings of smooth hypersurfaces.

\begin{prop}
\label{proposition: f_*D is preserved under pushforward of inclusion of smooth hypersurfaces}
    Let $X$ be a smooth projective variety, and let $j: Y \hra X$ be the inclusion of a smooth hypersurface $Y$ in $X$. Then 
    \begin{equation*}
        j_*(\Omega_*(Y)_{\fl_*\rmD}) \subseteq \Omega_*(X)_{\fl_*\rmD}.
    \end{equation*}
\end{prop}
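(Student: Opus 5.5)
The plan is to follow the strategy of \cite[Proposition 3.7]{KollarVoisin2024}: realize the given class as the pushforward of a product of first Chern classes along a \emph{flat} morphism to $X$, rather than the non-flat composite $W\ra Y\hra X$. By linearity it suffices to treat $\beta=\phi_*\alpha$, where $\phi:W\ra Y$ is flat with $W$ smooth projective and $\alpha=\prod_i c_1(L_i)\in\Omega^*(W)_\rmD$, so that $j_*\beta=(j\circ\phi)_*\alpha$. The mechanism I will exploit is the self-intersection identity $i_*i^*\gamma=\gamma\cdot c_1(O(D))$ for a smooth divisor $i:D\hra V$, which holds by the projection formula and Proposition~\ref{proposition: geometric interpretation of top Chern class}.

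\textbf{Step 1: move the line bundles to an ambient space.} Write each $L_i\cong A_i\otimes B_i^\vee$ with $A_i,B_i$ very ample. Using the formal group law, $c_1(L_i)$ is a power series with $\bbL^*$ coefficients in $c_1(A_i)$ and $c_1(B_i)$. Since $\Omega^*(-)_{\fl_*\rmD}$ is an $\bbL^*$-module (Remark~\ref{remark: f_*D is a L_*-module and an Omega^*(X)_D-module}), we may therefore assume that every $L_i$ is very ample relative to $\phi$. Choose vector bundles $E_i$ on $X$ (for instance, duals of $H^0$'s of sufficiently ample twists, so that $E_i|_Y$ surjects onto $\phi_*$ of suitable twists of the $L_i$). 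This gives a closed embedding $k:W\hra P_Y$ over $Y$, where $P\coloneqq\prod_{X}\bbP(E_i)$ is smooth over $X$ via $p:P\ra X$, $P_Y=p^{-1}(Y)$, and each $L_i$ is, up to a twist pulled back from $Y$ (hence from $X$ after absorbing it into the $E_i$), the restriction of a tautological $O(1)$. Thus $\alpha=k^*\gamma$ with $\gamma\in\Omega^*(P)_\rmD$.

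\textbf{Step 2: express $j_*\phi_*\alpha$ as a flat pushforward.} By the projection formula,
\[
j_*\phi_*\alpha=p_*\bigl(\gamma\cdot \iota_*k_*(1_W)\bigr),
\]
where $\iota:P_Y\hra P$ is the inclusion. It therefore suffices to show that $\iota_*k_*(1_W)$ lies in $\Omega^*(P)_{\fl_*\rmD}$. Indeed, $\Omega^*(P)_{\fl_*\rmD}$ is an $\Omega^*(P)_\rmD$-module, and Proposition~\ref{proposition: stability results for f_*D}(1) applies to the smooth morphism $p$.

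\textbf{Step 3: the main obstacle.} The hard step is producing a flat family over $X$ (or over $P$) whose fibre over $Y$ recovers $W$. The first attempt will be a Hilbert-scheme/incidence construction. Let $\mathcal{H}$ be a smooth flat family over $X$ of complete-intersection-type subschemes of the fibres of $P$, obtained by taking general sections of the twisted bundles, and arranged so that $W$ arises as a divisor $D\subset\widetilde W$ with $\widetilde W\ra X$ flat and $D$ equal to the preimage of $Y$. Then $[W]$ becomes $c_1(O(D))$ times a class that is flat-pushed forward, and this lies in $\fl_*\rmD$.

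Where the construction only gives the relation up to a ``degree'' factor, that is, a factor $p_*(t^{r-1})$ in place of $1$, I will use Proposition~\ref{proposition: pushforward from projective bundle} to invert it. Equation~\eqref{eq: 1_X=c_r(E)s_-r(E)+sum over s with t} writes $1_X$ as
\[
c_r(E^\vee)\,s_{-r}(E^\vee)+\sum_{s\geq 1}Q_s(E^\vee)\,p_*(t^{s-1}).
\]
Multiplying $j_*\beta$ by this expression, each summand is a Chern class or Segre class of $E^\vee$, which lies in $\Omega^*(X)_{\sm_*\rmD}$ by Proposition~\ref{proposition: Chern classes belong to s_*D}, times a projective-bundle pushforward. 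The product-formula argument of Proposition~\ref{proposition: s_*D is a L_*-algebra and f_*D is a s_*D-module} then keeps each summand in $\Omega^*(X)_{\fl_*\rmD}$.

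I expect verifying smoothness and flatness of the auxiliary family $\widetilde W\ra X$, and that $D$ is exactly $W$ with the right line bundles, to be the delicate part. This is where the transversality lemma \cite[Lemma 3.10]{KollarVoisin2024} and generic choice of sections should be invoked.
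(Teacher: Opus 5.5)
Your proposal has a genuine gap, at exactly the point you flag as the ``main obstacle''.

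\textbf{Steps 1--2 lose the only useful structure.} The reduction discards what you actually have, namely the flatness of $\phi$. After embedding $k\colon W\hra P_Y$, you need $\iota_*k_*(1_W)\in\Omega^*(P)_{\fl_*\rmD}$. That is the class of a smooth closed subvariety pushed forward along the non-flat composite $\iota\circ k$. This is no easier than the original problem, where only $j$ was non-flat. In fact it is a special case of Theorem~\ref{theorem: all classes are f_*D}, which the paper proves only by way of the present proposition, through Propositions~\ref{proposition: f_*D is preserved under pushforward of inclusion of smooth complete bundle-sections} and~\ref{proposition: f_*D is preserved under pushforward along cbs blow-up}.

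\textbf{Step 3 does not close the gap.} It needs a smooth $\widetilde W$, flat over $X$, with $\widetilde W\times_X Y=W$ and the $L_i$ extending. You give no construction, and nothing guarantees one exists. Extending a flat family off the hypersurface $Y$ is in general an obstructed deformation problem. Families of complete intersections in the fibres of $P$ cannot recover $W$, since $W$ is not a complete intersection in $P_Y$ in general.

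\textbf{The final use of \eqref{eq: 1_X=c_r(E)s_-r(E)+sum over s with t} on $X$ is circular.} The resulting terms $c_r(E^\vee)s_{-r}(E^\vee)\cdot j_*\beta$ and $Q_s(E^\vee)\cdot p_*(t^{s-1}\cdot p^*j_*\beta)$ are known to lie in $\Omega^*(X)_{\fl_*\rmD}$ only if $j_*\beta$, respectively $p^*j_*\beta$, already does.

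\textbf{The missing idea} is to leave $\phi$ alone and make $j$ flat through its graph.
\begin{itemize}
\item By Proposition~\ref{proposition: formula of correspondence}, $j_*\beta=pr_{X*}(pr_Y^*\beta\cdot i_*(1_{\Gamma_j}))$.
\item Let $\tau\colon T=\mathrm{Bl}_{\Gamma_j}(Y\times X)\to Y\times X$ be the blow-up along the graph, with exceptional divisor $\mathcal{E}\cong\bbP(N^\vee)$. Here $N\cong(pr_X\circ i)^*TX$ has rank $n=\dim X$.
\item Apply Proposition~\ref{proposition: pushforward from projective bundle} on $\Gamma_j$, not on $X$. Since $\dim\Gamma_j=n-1$, the troublesome term $c_n(N)s_{-n}(N)$ vanishes for dimension reasons. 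This leaves $1_{\Gamma_j}=\sum_{s\geq1}Q_s(N)\cdot\tau'_*(t^{s-1})$, with $\tau'=\tau|_{\mathcal{E}}$.
\item Push forward and use $t=c_1(O_T(-\mathcal{E}))|_{\mathcal{E}}$. This gives $j_*\beta=\sum_{s\geq1}Q_s(TX)\cdot q_*\bigl((pr_Y\circ\tau)^*\beta\cdot[-\mathcal{E}]_\Omega^{s-1}\cdot[\mathcal{E}]_\Omega\bigr)$.
\item Here $pr_Y\circ\tau$ is smooth and $q=pr_X\circ\tau$ is flat, by \cite[Lemma 3.8]{KollarVoisin2024}. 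Propositions~\ref{proposition: stability results for f_*D}, \ref{proposition: s_*D is a L_*-algebra and f_*D is a s_*D-module} and~\ref{proposition: Chern classes belong to s_*D} then finish the argument.
\end{itemize}
The flat morphism $\phi$ enters only through the smooth pullback of $\beta$, so no deformation of $W$ is needed. There is also no circularity, because the identity for $1$ is used where its leading term is zero.
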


\begin{proof}
    Suppose that $\dim(X)=n$. Denote by $i:\Gamma_j\hra Y\times X$ the inclusion of the graph of $j$. Then~$\Gamma_j\cong Y$ is of dimension $n-1$. We blow up $Y\times X$ along $\Gamma_j$ and obtain a smooth projective variety $T$. Let $\tau: T \ra Y\times X$ be the blow-up morphism, and let $E\subset T$ be the exceptional divisor. Denote by $\tau':E \ra \Gamma_j$ the restriction of $\tau$ to $E$. We have the following commutative diagram.

    \begin{equation*}
        \begin{tikzcd}
            E \ar[d,"\tau'"'] 
            \ar[r,hook,"i'"] & T=Bl_{\Gamma_j}(Y\times X) \ar[d,"\tau"] \\
            \Gamma_j \ar[r,hook, "i"'] & Y\times X
        \end{tikzcd}
    \end{equation*}

    Let $\alpha\in\Omega_*(Y)_{\fl_*\rmD}$. Then by Proposition \ref{proposition: formula of correspondence}, we have 
    \begin{equation}
    \label{eq: j_*alpha}
        j_*\alpha = pr_{X*}(pr_Y^*\alpha\cdot i_*(1_{\Gamma_j})),
    \end{equation}
    where $pr_X$ and $pr_Y$ are the projections from $Y\times X$ to $X$ and $Y$.

    Let $N\coloneqq N_{\Gamma_j/Y\times X}\ra \Gamma_j$ be the normal bundle. We can view $\tau': E \ra \Gamma_j$ as the projectivization of the bundle $\N^\vee$. We denote by $O(1)\ra \bbP(N^\vee)\cong E$ the tautological quotient line bundle over $E$, and define~$t\in\Omega^1(E)$ by $t\coloneqq c_1(O(1))$. By Proposition \ref{proposition: pushforward from projective bundle} applied with $E=N^\vee$, we obtain
    \begin{equation}
    \label{eq: 1_Gamma_j}
        1_{\Gamma_j} = c_n(N)\cdot s_{-n}(N) + \sum_{s\geq 1}Q_s(N)\cdot\tau'_*(t^{s-1}) = \sum_{s\geq 1}Q_s(N)\cdot\tau'_*(t^{s-1}),
    \end{equation}
    where $Q_s(N)$ are polynomials in Chern classes of $N$ with $\bbL^*$-coefficients as in Remark \ref{remark: Q_s(E)}. Here, we use the fact that $c_n(N)=0$ since it belongs to $\Omega^n(\Gamma_j)=\Omega_{-1}(\Gamma_j)=0$. 
    
    The normal bundle $N\ra\Gamma_j$ is isomorphic to the pullback of the tangent bundle $TX\ra X$ along the morphism $pr_X\circ i:\Gamma_j\hra Y\times X\ra X$. Therefore, we have $c_i(N)=i^*pr_X^*c_i(TX)$, and hence~$Q_s(N)=i^*pr_X^*Q_s(TX)$ for any $s\geq 1$. Applying this to (\ref{eq: 1_Gamma_j}), we obtain
    \begin{align}
        i_*(1_{\Gamma_j}) &= \sum_{s\geq 1} i_*(i^*pr_X^*Q_s(TX)\cdot\tau'_*(t^{s-1}))\notag\\
        &= \sum_{s\geq 1}pr_X^*Q_s(TX)\cdot\tau_*i'_*(t^{s-1}) \label{eq: i_*(1_Gamma_j)}.
    \end{align}
    Since $i'^*O_T(E)=O(-1)$, we have $t=c_1(O(1))=i'^*c_1(O_T(-E))$, and hence
    \begin{equation*}
        i'_*(t^{s-1})=i'_*(i'^*c_1(O_T(-E))^{s-1})=c_1(O_T(-E))^{s-1}\cdot i'_*(1)=[-E]_\Omega^{s-1}\cdot [E]_\Omega.
    \end{equation*}
    Therefore, equation (\ref{eq: i_*(1_Gamma_j)}) implies that
    \begin{equation}
    \label{eq: i_*(1_Gamma_j)'}
        i_*(1_{\Gamma_j})= \sum_{s\geq 1}pr_X^*Q_s(TX)\cdot\tau_*([-E]_\Omega^{s-1}\cdot [E]_\Omega).
    \end{equation}

Applying (\ref{eq: i_*(1_Gamma_j)'}) to (\ref{eq: j_*alpha}), we can write $j_*\alpha$ as 
    \begin{align*}
        j_*\alpha&=pr_{X*}(pr_Y^*\alpha\cdot i_*(1_{\Gamma_j}))\\
        &=\sum_{s\geq 1}pr_{X*}(pr_Y^*\alpha\cdot pr_X^*Q_s(TX)\cdot\tau_*([-E]_\Omega^{s-1}\cdot[E]_\Omega))\\
        &=\sum_{s\geq 1} Q_s(TX)\cdot pr_{X*}(pr_Y^*\alpha\cdot\tau_*([-E]_\Omega^{s-1}\cdot[E]_\Omega))\\
        &=\sum_{s\geq 1} Q_s(TX)\cdot q_*(p^*\alpha\cdot[-E]_\Omega^{s-1}\cdot[E]_\Omega),
    \end{align*}
    where $p=pr_Y\circ\tau:T\ra Y$ and $q=pr_X\circ\tau:T\ra X$. 
    
    The morphism $p$ is smooth and $q$ is flat by \cite[Lemma 3.8]{KollarVoisin2024}. Therefore, $p^*\alpha$ is in $\Omega_*(T)_{\fl_*\rmD}$ by Proposition \ref{proposition: stability results for f_*D}, and hence so is $p^*\alpha\cdot[-E]_\Omega^{s-1}\cdot[E]_\Omega$ by Remark \ref{remark: f_*D is a L_*-module and an Omega^*(X)_D-module}. Since $q$ is flat, its pushforward along $q$ belongs to $\Omega_*(X)_{\fl_*\rmD}$ by Proposition \ref{proposition: stability results for f_*D}. Meanwhile, by Proposition \ref{proposition: Chern classes belong to s_*D} and Proposition~\ref{proposition: s_*D is a L_*-algebra and f_*D is a s_*D-module}, the class $Q_s(TX)$ is in $\Omega_*(X)_{\rmsm_*\rmD}$. Therefore, the product $Q_s(TX)\cdot q_*(p^*\alpha\cdot[-E]_\Omega^{s-1}\cdot[E]_\Omega)$ belongs to $\Omega_*(X)_{\fl_*\rmD}$ by Proposition \ref{proposition: s_*D is a L_*-algebra and f_*D is a s_*D-module} for any $s\geq 1$. This implies that $j_*\alpha\in\Omega_*(X)_{\fl_*\rmD}$.
\end{proof}

\begin{prop}
\label{proposition: f_*D is preserved under pushforward of inclusion of smooth complete bundle-sections}
    Let $X$ be a smooth projective variety, and let $j: Y \hra X$ be the inclusion of a smooth complete bundle-section of $X$. Then 
    \begin{equation*}
        j_*(\Omega_*(Y)_{\fl_*\rmD}) \subseteq \Omega_*(X)_{\fl_*\rmD}
    \end{equation*}
\end{prop}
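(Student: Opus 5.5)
We argue by induction on the rank $r$ of the vector bundle $E\ra X$ of which $Y$ is the zero-locus of a (necessarily transverse) section $\sigma$. The case $r=1$ is exactly Proposition~\ref{proposition: f_*D is preserved under pushforward of inclusion of smooth hypersurfaces}, since then $Y$ is a smooth hypersurface. For $r\geq 2$ we use diagram~(\ref{diagram: cbs diagram}). Let $p:\bbP(E)\ra X$ be the projectivization, and let $X'\subset\bbP(E)$ be the zero-locus of the induced section $\sigma'$ of $O(1)$. By \cite[Lemma 3.10]{KollarVoisin2024}, $X'$ is a smooth hypersurface and $\bbP(E|_Y)=p^{-1}(Y)$ is the zero-locus of the transverse section $\sigma''$ of $F|_{X'}$, which has rank $r-1$. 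Thus $j'':X'\hra\bbP(E)$ is a smooth hypersurface and $j':\bbP(E|_Y)\hra X'$ is a smooth complete bundle-section of rank $r-1$. All these varieties are smooth projective.

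Let $\alpha\in\Omega_*(Y)_{\fl_*\rmD}$. The map $p_Y\coloneqq p|_Y:\bbP(E|_Y)\ra Y$ is smooth, so Proposition~\ref{proposition: stability results for f_*D}(2) gives $p_Y^*\alpha\in\Omega_*(\bbP(E|_Y))_{\fl_*\rmD}$. Multiplying by powers of $t=c_1(O(1))$ restricted to $\bbP(E|_Y)$ keeps the class in this group, by Remark~\ref{remark: f_*D is a L_*-module and an Omega^*(X)_D-module}. The induction hypothesis applied to $j'$, followed by Proposition~\ref{proposition: f_*D is preserved under pushforward of inclusion of smooth hypersurfaces} applied to $j''$, gives
\begin{equation*}
j''_*j'_*(p_Y^*\alpha\cdot t^{m})\in\Omega_*(\bbP(E))_{\fl_*\rmD}.
\end{equation*}
Since $p$ is smooth, hence flat, Proposition~\ref{proposition: stability results for f_*D}(1) shows that $p_*j''_*j'_*(p_Y^*\alpha\cdot t^m)=j_*p_{Y*}(p_Y^*\alpha\cdot t^m)\in\Omega_*(X)_{\fl_*\rmD}$. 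Here we used $p\circ j''\circ j'=j\circ p_Y$. By the projection formula this class equals $j_*(\alpha\cdot p_{Y*}(t^m))$.

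It remains to recover $\alpha$ itself from the classes $\alpha\cdot p_{Y*}(t^m)$. For this we apply Proposition~\ref{proposition: pushforward from projective bundle} to the bundle $E|_Y$ of rank $r$ over $Y$:
\begin{equation*}
1_Y=c_r(E|_Y^\vee)\cdot s_{-r}(E|_Y^\vee)+\sum_{s\geq 1}Q_s(E|_Y^\vee)\cdot p_{Y*}(t^{s-1}).
\end{equation*}
Multiplying by $\alpha$ and applying $j_*$ with the projection formula yields
\begin{equation*}
j_*\alpha=c_r(E^\vee)s_{-r}(E^\vee)\cdot j_*\alpha+\sum_{s\geq 1}Q_s(E^\vee)\cdot j_*(\alpha\cdot p_{Y*}(t^{s-1})).
\end{equation*}
By Propositions~\ref{proposition: Chern classes belong to s_*D} and~\ref{proposition: s_*D is a L_*-algebra and f_*D is a s_*D-module}, the classes $Q_s(E^\vee)$ lie in $\Omega^*(X)_{\rmsm_*\rmD}$. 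Hence the sum over $s$ lies in $\Omega_*(X)_{\fl_*\rmD}$.

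The main obstacle is the first term, $c_r(E^\vee)s_{-r}(E^\vee)\cdot j_*\alpha$, because it contains $j_*\alpha$ itself. The point is that $c_r(E^\vee)$ has positive degree $r$, so the dimension drops. Writing $\beta\coloneqq s_{-r}(E^\vee)\cdot\alpha$ (pulled back to $Y$), we get $c_r(E^\vee)\cdot j_*\beta=j_*(j^*c_r(E^\vee)\cdot\beta)$. The factor $j^*c_r(E^\vee)=\pm c_r(N_{Y/X})$ up to an $\bbL^*$-polynomial correction, and $\beta$ lies in $\Omega_*(Y)_{\fl_*\rmD}$ by Proposition~\ref{proposition: s_*D is a L_*-algebra and f_*D is a s_*D-module}, Segre classes being in $\sm_*\rmD$. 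The product $j^*c_r(E^\vee)\cdot\beta$ is therefore again an element of $\Omega_*(Y)_{\fl_*\rmD}$, of dimension strictly smaller than that of $\alpha$. We thus run a secondary descending induction on the dimension $d$ of $\alpha$, with $\Omega_{<0}(Y)=0$ as the base case. The term $c_r(E^\vee)s_{-r}(E^\vee)\cdot j_*\alpha=j_*(\text{lower-dimensional element of }\Omega_*(Y)_{\fl_*\rmD})$ then lies in $\Omega_*(X)_{\fl_*\rmD}$ by this inner induction, which closes the argument.
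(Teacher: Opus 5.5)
\textbf{There is a genuine gap in your last paragraph: the dimension count for the first term is wrong.} Your handling of the terms $Q_s(E^\vee)\cdot j_*(\alpha\cdot p_{Y*}(t^{s-1}))$ is correct. Applying Proposition~\ref{proposition: pushforward from projective bundle} on $Y$ to $E|_Y$, rather than on $X$ to $E$, is equivalent by the projection formula, and that part agrees with the paper.

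The problem is the class $c_r(E^\vee)s_{-r}(E^\vee)\cdot j_*\alpha$. The Segre class $s_{-r}(E^\vee)$ lies in $\Omega^{-r}(X)$, so multiplying by it \emph{raises} dimension by $r$. For $\alpha\in\Omega_d(Y)_{\fl_*\rmD}$ you get $\beta=s_{-r}(E|_Y^\vee)\cdot\alpha\in\Omega_{d+r}(Y)_{\fl_*\rmD}$. Multiplying by $c_r(E|_Y^\vee)$ then gives a class in $\Omega_d(Y)_{\fl_*\rmD}$, of exactly the same dimension as $\alpha$, not a lower one. In other words, $c_r(E^\vee)s_{-r}(E^\vee)$ has degree $0$. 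It vanishes in the Chow ring only because negative Segre classes vanish there, and that fails in cobordism. So your inner induction on $d$ is circular: it reduces the claim for $j_*\alpha$ to the same claim for another element of $\Omega_d(Y)_{\fl_*\rmD}$.

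What is missing is a way to split off the positive-degree $\bbL_*$-coefficients hidden in $s_{-r}(E^\vee)$. The paper does this with Proposition~\ref{proposition: L_*-subalgebras generated by Chern classes and Segre classes are the same}. It writes $s_{-r}(E^\vee)=\sum_\lambda a_\lambda c_\lambda(E^\vee)$ with $c_\lambda=\prod_i c_i^{\lambda_i}$. Comparing degrees gives $a_\lambda\in\bbL_{r+|\lambda|}\subseteq\bbL_{>0}$. Hence
\[
c_r(E^\vee)s_{-r}(E^\vee)\cdot j_*\alpha=\sum_\lambda a_\lambda\cdot j_*\bigl(c_r(j^*E^\vee)\,c_\lambda(j^*E^\vee)\cdot\alpha\bigr).
\]
Here $c_r(j^*E^\vee)c_\lambda(j^*E^\vee)\alpha$ lies in $\Omega_{d-r-|\lambda|}(Y)_{\fl_*\rmD}$ by Propositions~\ref{proposition: Chern classes belong to s_*D} and~\ref{proposition: s_*D is a L_*-algebra and f_*D is a s_*D-module}. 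It therefore has dimension strictly less than $d$, so the induction on $d$ legitimately applies to its pushforward. The coefficients $a_\lambda$ are then absorbed by the $\bbL_*$-module structure of $\Omega_*(X)_{\fl_*\rmD}$ (Remark~\ref{remark: f_*D is a L_*-module and an Omega^*(X)_D-module}).

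Knowing only that $s_{-r}(E^\vee)\in\ker\theta(X)=\Omega^*(X)\times\bbL_{>0}$ would not suffice. You also need the cofactors to lie in $\Omega^*(X)_{\rmsm_*\rmD}$, and the Chern–Segre comparison provides exactly this. With your last paragraph replaced in this way, the argument goes through and coincides with the paper's proof.
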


\begin{proof}
    Suppose that $Y$ is of codimension $r$ in $X$. We will prove the following statement:
    \begin{equation*}
        j_*(\Omega_d(Y)_{\fl_*\rmD}) \subseteq \Omega_d(X)_{\fl_*\rmD}
    \end{equation*}
    by double induction, first on $r$ and then on $d$. The case $r=1$ and arbitrary $d$ is exactly Proposition~\ref{proposition: f_*D is preserved under pushforward of inclusion of smooth hypersurfaces}, the case $d<0$ is trivial.

    Assume now that $r\geq 2$, $d\geq 0$, and that the statement holds when the codimension is less than~$r$ with arbitrary degree, and when the codimension is equal to $r$ with degree less than $d$.

    Suppose that $Y$ is defined as the zero-locus of a transverse section $\sigma$ of a vector bundle $E\ra X$ of rank $r$. We use diagram (\ref{diagram: cbs diagram}) again. In particular, we let the morphisms $p,j',j''$ be as in that diagram. Let $\alpha\in\Omega_d(Y)_{\fl_*\rmD}$. Without loss of generality, we may assume that~$\alpha=\phi_*(\beta)$ for some flat morphism $\phi:Z \ra X$ from a smooth projective variety $Z$, and some~$\beta\in\Omega_d(Z)_\rmD$.

    By Proposition \ref{proposition: pushforward from projective bundle} applied to the bundle $E\ra X$, we obtain 
    \begin{equation}
    \label{eq: 1_X=c_r(E)s_-r(E)+sum over s}
        1_X = c_r(E^\vee)\cdot s_{-r}(E^\vee) + \sum_{s\geq 1}Q_s(E^\vee)\cdot p_*(t^{s-1})\ \text{in}\ \Omega^0(X),
    \end{equation}
    where $t\coloneqq c_1(O(1))$ is the first Chern class of the tautological quotient line bundle $O(1)\ra\bbP(E)$, and $Q_s(E^\vee)$ are polynomials in Chern classes of $E^\vee$ with $\bbL^*$-coefficients as in Remark \ref{remark: Q_s(E)}.

    By (\ref{eq: 1_X=c_r(E)s_-r(E)+sum over s}) and the projection formula, we have 
    \begin{equation}
    \label{eq: j_*alpha=c_r(E)s_-r(E)j_*alpha+sum over s}
        j_*\alpha = 1_X\cdot j_*\alpha = c_r(E^\vee)\cdot s_{-r}(E^\vee)\cdot j_*\alpha + \sum_{s\geq 1}Q_s(E^\vee)\cdot p_*(t^{s-1}\cdot p^*j_*\alpha).
    \end{equation}
    
    Since $p$ is smooth, it is transverse to $j$, and we obtain
    \begin{equation}
    \label{eq: p^*j_*alpha}
        p^*j_*\alpha = j''_*j'_*(p|_Y)^*\alpha
    \end{equation}
    by transverse commutativity.

    As $\alpha\in\Omega_d(Y)_{\fl_*\rmD}$ and $p|_Y$ is smooth, the class $(p|_Y)^*\alpha$ lies in $\Omega_d(X'')_{\fl_*\rmD}$ by Proposition~\ref{proposition: stability results for f_*D}. Since $j':p^{-1}(Y)\hra X'$ is the inclusion of a smooth complete bundle-section of codimension $r-1$, and $j'':X'\hra\bbP(E)$ is the inclusion of a smooth hypersurface, equation (\ref{eq: p^*j_*alpha}) and the induction hypothesis imply that $p^*j_*\alpha\in\Omega_*(\bbP(E))_{\fl_*\rmD}$. By Remark~\ref{remark: f_*D is a L_*-module and an Omega^*(X)_D-module}, the class $t^{s-1}\cdot p^*j_*\alpha$ also lies in~$\Omega_*(\bbP(E))_{\fl_*\rmD}$ for any $s\geq 1$, hence we have $p_*(t^{s-1}\cdot p^*j_*\alpha)\in\Omega_*(X)_{\fl_*\rmD}$ by Proposition \ref{proposition: stability results for f_*D}. Meanwhile, by Proposition \ref{proposition: Chern classes belong to s_*D} and Proposition \ref{proposition: s_*D is a L_*-algebra and f_*D is a s_*D-module}, the class $Q_s(E^\vee)$ belongs to $\Omega_*(X)_{\rmsm_*\rmD}$ for all~$s \geq 1$. Therefore, the product $Q_s(E^\vee)\cdot p_*(t^{s-1}\cdot p^*j_*\alpha)$ lies in $\Omega_d(X)_{\fl_*\rmD}$ by Proposition \ref{proposition: s_*D is a L_*-algebra and f_*D is a s_*D-module}.

    Thus, to prove that $j_*\alpha\in\Omega_d(X)_{\fl_*\rmD}$, it suffices to show that the class $c_r(E^\vee) \cdot s_{-r}(E^\vee) \cdot j_*\alpha$ in equation (\ref{eq: j_*alpha=c_r(E)s_-r(E)j_*alpha+sum over s}) also lies in $\Omega_d(X)_{\fl_*\rmD}$.

    By Proposition \ref{proposition: L_*-subalgebras generated by Chern classes and Segre classes are the same}, we can write $s_{-r}(E^\vee)$ as 
    \begin{equation}
    \label{eq: s_-r(E)=sum of a_lambda c_lambda(E)}
        s_{-r}(E^\vee) = \sum_{\lambda}a_{\lambda}\cdot c_{\lambda}(E^\vee).
    \end{equation}
    Here, we sum over $\lambda\coloneqq(\lambda_1,\lambda_2,\ldots)$ with each $\lambda_i\in\bbZ_{\geq 0}$, where $c_\lambda(E^\vee)\coloneqq\prod_i c_i(E^\vee)^{\lambda_i}$, and~$a_\lambda\in\bbL_*$. By comparing degrees on both sides of (\ref{eq: s_-r(E)=sum of a_lambda c_lambda(E)}), we obtain $a_\lambda\in \bbL_{r+|\lambda|}$, where $|\lambda|\coloneqq\sum_i i\lambda_i$. In particular, we have $a_\lambda\in \bbL_{>0}$ for any $\lambda$. From (\ref{eq: s_-r(E)=sum of a_lambda c_lambda(E)}), we deduce that
    \begin{equation}
    \label{eq: c_r(E)s_-r(E)j_*alpha=sum over lambda}
        c_r(E^\vee)\cdot s_{-r}(E^\vee)\cdot j_*\alpha = \sum_\lambda a_\lambda\cdot j_*(c_r(j^*E^\vee)\cdot c_\lambda(j^*E^\vee)\cdot\alpha)
    \end{equation}
    in $\Omega_d(X)$. By Proposition \ref{proposition: Chern classes belong to s_*D} and Proposition~\ref{proposition: s_*D is a L_*-algebra and f_*D is a s_*D-module}, we have $c_r(j^*E^\vee)\cdot c_\lambda(j^*E^\vee)\cdot\alpha\in\Omega_*(Y)_{\fl_*\rmD}$. Since~$a_\lambda\in\bbL_{>0}$, the class $c_r(j^*E^\vee)\cdot c_\lambda(j^*E^\vee)\cdot\alpha$ lies in $\Omega_{<d}(Y)_{\fl_*\rmD}$. Therefore, by the induction hypothesis, we have $j_*(c_r(j^*E^\vee)\cdot c_\lambda(j^*E^\vee)\cdot\alpha)\in\Omega_{<d}(X)_{\fl_*\rmD}$ for any~$\lambda$. By Remark~\ref{remark: f_*D is a L_*-module and an Omega^*(X)_D-module}, it follows from (\ref{eq: c_r(E)s_-r(E)j_*alpha=sum over lambda}) that $c_r(E^\vee)\cdot s_{-r}(E^\vee)\cdot j_*\alpha\in\Omega_d(X)_{\fl_*\rmD}$, which completes the proof.
\end{proof}

\begin{prop}
\label{proposition: f_*D is preserved under pushforward along cbs blow-up}
    Let $X$ be a smooth projective variety, and let $Z\subset X$ be a smooth complete bundle-section in $X$. Denote by $\tau:\wt{X} \ra X$ the blow-up of $X$ along $Z$. Then 
    \begin{equation*}
        \tau_*(\Omega_*(\wt{X})_{\fl_*\rmD})\subseteq \Omega_*(X)_{\fl_*\rmD}.
    \end{equation*}
\end{prop}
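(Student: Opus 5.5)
The plan is to use the ``cbs blow-up diagram'' (\ref{diagram: cbs blow-up}) alluded to in the introduction, which realizes $\wt{X}$ inside a projective bundle over $X$. Suppose $Z$ is the zero-locus of a transverse section $\sigma$ of a vector bundle $E\ra X$ of rank $r$. Consider $q:\bbP(E^\vee)\ra X$; wait — more precisely, the blow-up $\wt{X}=Bl_Z X$ embeds into $P\coloneqq\bbP(E)$ (projectivization of one-dimensional quotients, matching $\wt X=\{(x,\ell)\colon \sigma(x)\in \ell\}$ with the appropriate dual convention) as the zero-locus of the section of $F=\ker(p^*E\ra O(1))$ (or of the dual tautological subbundle quotient) induced by $p^*\sigma$. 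The standard fact (as in \cite[Section 3]{KollarVoisin2024}) is that this section is transverse, so $\iota:\wt{X}\hra P$ is a smooth complete bundle-section of codimension $r-1$ in the smooth projective variety $P$, and $\tau=p\circ\iota$ where $p:P\ra X$ is the bundle projection. I will set this up with the correct choice of $E$ versus $E^\vee$ so that the tautological bundle on the blow-up corresponds to the normal line of the exceptional divisor; this bookkeeping is routine.

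Given this factorization, the proof is two lines. Let $\alpha\in\Omega_*(\wt{X})_{\fl_*\rmD}$. By Proposition~\ref{proposition: f_*D is preserved under pushforward of inclusion of smooth complete bundle-sections} applied to the smooth cbs $\iota:\wt X\hra P$, we get $\iota_*\alpha\in\Omega_*(P)_{\fl_*\rmD}$. Since $p$ is smooth, hence flat, between smooth projective varieties, Proposition~\ref{proposition: stability results for f_*D}(1) gives $p_*\iota_*\alpha\in\Omega_*(X)_{\fl_*\rmD}$, and $\tau_*=p_*\iota_*$ by functoriality (A1).

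The only real content is the geometric step: verifying that $\wt X\subset P$ is the zero-locus of a \emph{transverse} section of a rank $r-1$ bundle (so that it is a smooth cbs in the sense required by Proposition~\ref{proposition: f_*D is preserved under pushforward of inclusion of smooth complete bundle-sections}), and that the composite to $X$ is the blow-up map. I would check this locally: trivialize $E$ near a point of $Z$ with $\sigma=(f_1,\dots,f_r)$ a regular sequence forming part of local coordinates; then the locus in $X\times\bbP^{r-1}$ given by $f_iy_j-f_jy_i=0$ is the usual blow-up chart description, and in the chart $y_1\neq 0$ the equations reduce to $f_j=f_1 y_j$ for $j\geq 2$, which are $r-1$ equations with independent differentials (because of the $f_j$), giving transversality. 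Away from $Z$ the locus is the graph of $x\mapsto[\sigma(x)]$, again transverse. This is essentially \cite[Lemma 3.10]{KollarVoisin2024}-type reasoning and is the step I expect to need care, mainly regarding conventions, not difficulty.
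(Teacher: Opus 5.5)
Your proposal is correct and follows the paper's argument exactly. The paper realizes $\wt X$ as the zero-locus of the transverse section $\bar\sigma$ of the rank $r-1$ bundle $p^*E/O(-1)$ on $\bbP(E^\vee)$, so that $\tau=p\circ j$ with $j$ a smooth cbs and $p$ flat, and concludes by Propositions~\ref{proposition: f_*D is preserved under pushforward of inclusion of smooth complete bundle-sections} and~\ref{proposition: stability results for f_*D}. Among your hedged conventions, this quotient-bundle version on $\bbP(E^\vee)$ is the right one, and your local equations $f_iy_j-f_jy_i=0$ are precisely its vanishing locus. By contrast, $F=\ker(p^*E\ra O(1))$ on $\bbP(E)$ only receives a section induced by $p^*\sigma$ over the hypersurface $X'$ of diagram~(\ref{diagram: cbs diagram}).
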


\begin{proof}
    Since $Z$ is a smooth complete bundle-section, we may assume that $Z$ is the zero-locus of a transverse section $\sigma$ of a vector bundle $E\ra X$ over $X$. 

    Let $p:\bbP(E^\vee)\ra X$ be the projectivization of the dual bundle $E^\vee$ of $E$, and let $O(1)\ra\bbP(E^\vee)$ be the tautological quotient line bundle. Taking the dual of the surjection $p^*E^\vee\thra O(1)$, we obatain an injection $O(-1)\hra p^*E$. The section $\sigma$ induces a section $p^*\sigma$ of the bundle $p^*E\ra\bbP(E^\vee)$. We define $\bar{\sigma}$ to be the projection of this section to the quotient bundle $Q\coloneqq p^*E/O(-1)\ra\bbP(E^\vee)$. By a local computation, one can show that $\bar{\sigma}$ is a transverse section, and its zero-locus is isomorphic to the blow-up $\wt{X}$. So we have the following commutative diagram, where we use $j$ to denote the inclusion of the zero-locus of $\bar{\sigma}$ in $\bbP(E^\vee)$.

    \begin{equation}
    \label{diagram: cbs blow-up}
        \begin{tikzcd}
            Q=p^*E/O(-1) & p^*E \ar[d] \ar[l,two heads] & E \ar[d]\\
             & \bbP(E^\vee) \ar[r,"p"] \ar[u, bend right, "p^*\sigma"'] \ar[lu, "\bar{\sigma}"] & X \ar[u,bend right, "\sigma"']\\
             & \{\bar{\sigma}=0\}\cong\wt{X} \ar[u,hook,"j"] \ar[ur,"\tau"'] & 
        \end{tikzcd}
    \end{equation}

    As $\tau=p\circ j$, we have $\tau_* = p_* \circ j_*:\Omega_*(\wt{X})\ra\Omega_*(X)$. Since $p$ is flat and $j$ is the inclusion of a smooth complete bundle-section, the proof is completed by Proposition~\ref{proposition: stability results for f_*D} and Proposition~\ref{proposition: f_*D is preserved under pushforward of inclusion of smooth complete bundle-sections}.
\end{proof}

\section{Koll\'ar--Voisin cbs resolution and smoothability of cycles in $\Omega^{*}$}
\label{section: Koll\'ar--Voisin cbs resolution and smoothability of cycles in Omega^{*}}

\subsection{The Koll\'ar--Voisin cbs resolution theorem} 
\label{subsection: The Koll\'ar--Voisin cbs resolution theorem}

In this section, we use a result of Koll\'ar and Voisin on cbs resolutions (see \cite[Corollary 1.10]{KollarVoisin2024}) to prove Theorem \ref{theorem: all classes are f_*D}, which is an analog of~\cite[Theorem 1.6]{KollarVoisin2024} (see Theorem \ref{KV's theorem: CH(X)=CH(X)_flCh}) in the algebraic cobordism setting. We then improve this result to obtain Theorem~\ref{theorem: all classes are fl_*VAD} by showing that the subgroup $\Omega^*(X)_{\fl_*\rmVAD}\subseteq\Omega^*(X)$ is in fact an~$\bbL^*$-submodule (see Proposition \ref{proposition: f_*VAD is a L_*-module}), which implies that $\Omega^*(X)_{\fl_*\rmD}=\Omega^*(X)_{\fl_*\rmVAD}$. Although the identification of the corresponding subgroups in the Chow ring is obvious, it requires some work in our setting for the reasons explained in the introduction.

\begin{thm}
\label{theorem: all classes are f_*D}
    Let $X$ be a smooth projective variety. Then
    \begin{equation*}
        \Omega_*(X) = \Omega_*(X)_{\fl_*\rmD}.
    \end{equation*}
\end{thm}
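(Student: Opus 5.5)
We argue by induction on the degree $d$, showing $\Omega_d(X)\subseteq\Omega_d(X)_{\fl_*\rmD}$ for all smooth projective $X$ simultaneously. The case $d<0$ is trivial. Fix $d\geq 0$ and assume $\Omega_{<d}(X')=\Omega_{<d}(X')_{\fl_*\rmD}$ for every smooth projective $X'$.

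\emph{Reduction to the Chow level.} By Lemma \ref{lemma: kernel of theta(X)}, $\theta(X)$ is surjective with kernel $\Omega^*(X)\times\bbL_{>0}$. An element of the kernel in degree $d$ is a sum $\sum a_j\beta_j$ with $a_j\in\bbL_{>0}$ and $\beta_j\in\Omega_{<d}(X)$. By induction each $\beta_j\in\Omega_*(X)_{\fl_*\rmD}$, which is an $\bbL^*$-module by Remark \ref{remark: f_*D is a L_*-module and an Omega^*(X)_D-module}. Hence the kernel in degree $d$ lies in $\Omega_d(X)_{\fl_*\rmD}$. It therefore suffices to show that $\theta(\Omega_d(X)_{\fl_*\rmD})=\CH_d(X)$, i.e.\ that every class in $\CH_d(X)$ lifts to $\Omega_d(X)_{\fl_*\rmD}$. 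This holds because $\theta$ maps $\Omega^*(-)_\rmD$ onto $\CH^*(-)_\rmD$ (Chern classes go to Chern classes) and commutes with pushforwards.

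\emph{Lifting Chow classes.} We cannot simply quote Theorem \ref{KV's theorem: CH(X)=CH(X)_flCh}. It is phrased with $\CH_{\fl_*\Ch}$, with Chern classes of arbitrary bundles rather than products of divisors. Even granting the conversion (Chern classes are pushforwards of products of divisors from flag bundles via the splitting principle, smooth morphisms), we would rather redo the argument directly. By the classical moving argument it suffices to lift $[Z]$ for $Z\subset X$ a subvariety of dimension $d$. We may further reduce to $Z$ smooth with $4\dim Z<\dim X$: resolve $Z$ by $\tilde Z\to X$ and factor this as $\tilde Z\hookrightarrow X\times\bbP^N\to X$, where the projection is flat. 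Replacing $X\times\bbP^N$ by $X\times\bbP^{N'}$ with $N'$ large, we may assume the embedded smooth $\tilde Z$ satisfies $4d<\dim$. By Proposition \ref{proposition: stability results for f_*D}(1), it then suffices to treat smooth $Z\subset X$ with $4\dim Z<\dim X$.

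Now apply Theorem \ref{KV's theorem: cbs resolution}. We get a tower of blow-ups along smooth cbs $\Pi:X_{r+1}\to X$ and a complete intersection $Z_{r+1}\subset X_{r+1}$ with $\Pi_*[Z_{r+1}]=[Z]$ in $\CH_d(X)$. Write $Z_{r+1}$ as the intersection of divisors $D_1,\dots,D_c$ in $X_{r+1}$. The class $\gamma\coloneqq\prod_i [D_i]_\Omega\in\Omega^*(X_{r+1})_\rmD\subseteq\Omega^*(X_{r+1})_{\fl_*\rmD}$ maps under $\theta$ to $[Z_{r+1}]$. For intersections that are not transverse, $\theta$ still gives $\prod c_1(O(D_i))$, whose Chow class is the cycle of the complete intersection. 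Applying Proposition \ref{proposition: f_*D is preserved under pushforward along cbs blow-up} iteratively gives $\Pi_*\gamma\in\Omega_d(X)_{\fl_*\rmD}$, and $\theta(\Pi_*\gamma)=[Z]$.

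\emph{Main obstacle.} The delicate step is the reduction to smooth $Z$ with $4\dim Z<\dim X$ inside a flat-over-$X$ ambient space. We must make sure the intermediate varieties (such as $X\times\bbP^N$) are smooth projective, so that the induction and all earlier propositions apply. We also need the Chow-group statement that classes of smooth subvarieties' pushforwards under flat maps, together with lower-degree corrections, generate $\CH_d(X)$. If the resolution step is awkward, the fallback is to cite Theorem \ref{KV's theorem: CH(X)=CH(X)_flCh} directly. We would then show $\theta(\Omega^*(-)_{\fl_*\rmD})\supseteq\CH^*(-)_{\fl_*\Ch}$ by writing Chern-class monomials as smooth pushforwards of products of first Chern classes on flag bundles. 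Composites of smooth and flat maps are flat, so this lands in $\fl_*\rmD$.
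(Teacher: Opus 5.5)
Your proof is correct. Its main route uses the same ingredients as the paper: Lemma~\ref{lemma: kernel of theta(X)} with induction on $d$, the embedding into $X\times\bbP^N$ with $N>4d$, Theorem~\ref{KV's theorem: cbs resolution}, and Proposition~\ref{proposition: f_*D is preserved under pushforward along cbs blow-up}. What differs is the organization.

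\textbf{The paper's order.} It starts from standard cycles $[\phi:Z\ra X]$, replaces $X$ by $X\times\bbP^N$, and only then compares $\Pi_*[Z_{r+1}\hra X_{r+1}]$ with $[Z\hra X]$ through $\theta$. So its induction hypothesis is applied to $X\times\bbP^N$ and must range over all $X$. It also needs $Z_{r+1}$ to be smooth, in order to have a standard cycle in $\Omega_d(X_{r+1})_\rmD$.

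\textbf{Your order.} You first isolate a formal step: a graded $\bbL_*$-submodule of $\Omega_*(X)$ that surjects onto $\CH_*(X)$ under $\theta$ is all of $\Omega_*(X)$. This uses induction only on the fixed $X$. What remains is a Chow-level lifting statement that needs no induction. Lifting the Chow generators via resolution and using $\prod_i[D_i]_\Omega$ removes any smoothness requirement on $Z_{r+1}$. The cost is the standard fact that a proper intersection of Cartier divisors on a smooth variety has intersection product equal to its cycle.

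Three minor corrections. No moving argument is needed to reduce to classes $[Z]$, since these generate $\CH_d(X)$ by definition. No lower-degree corrections arise at the Chow level, since $\tilde Z\ra Z$ is birational. Your sentence ``This holds because $\theta$ maps $\Omega^*(-)_\rmD$ onto $\CH^*(-)_\rmD$'' only yields the Chow analogue of $\fl_*\rmD$, not all of $\CH_d(X)$. The real content is in your next paragraph.

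Your fallback deserves more than a footnote, because it is a genuinely shorter proof. Once the formal reduction is in place, the theorem follows directly from Theorem~\ref{KV's theorem: CH(X)=CH(X)_flCh}:
\begin{itemize}
\item $\CH^*(Y)_{\Ch}\subseteq\theta(\Omega^*(Y)_{\rmsm_*\rmD})$, either by your flag-bundle argument or by Proposition~\ref{proposition: Chern classes belong to s_*D} together with the compatibility of $\theta$ with Chern classes;
\item a flat map composed with a smooth one is flat, so flat pushforwards of such classes land in $\theta(\Omega^*(X)_{\fl_*\rmD})$.
\end{itemize}
That route makes Propositions~\ref{proposition: pushforward from projective bundle}, \ref{proposition: f_*D is preserved under pushforward of inclusion of smooth hypersurfaces}, \ref{proposition: f_*D is preserved under pushforward of inclusion of smooth complete bundle-sections} and \ref{proposition: f_*D is preserved under pushforward along cbs blow-up} unnecessary for this theorem. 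In return, the paper's route gives a cobordism-level blow-up stability result, which is of independent interest but not logically required here.
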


\begin{proof}
    We prove that $\Omega_{\leq d}(X)=\Omega_{\leq d}(X)_{\fl_*\rmD}$ by induction on $d$. The case $d=-1$ holds trivially.

    Now we assume that $d\geq 0$, and that $\Omega_{<d}(X)=\Omega_{<d}(X)_{\fl_*\rmD}$ holds for any smooth projective variety $X$. We will show that $\Omega_d(X)=\Omega_d(X)_{\fl_*\rmD}$. Since $\Omega_d(X)$ is generated as a group by standard cycles (see Section \ref{subsection: Algebraic cobordism and its geometric interpretation}), i.e., classes of the form $[\phi:Z\ra X]$, where $Z$ is a smooth projective variety of dimension $d$, it suffices to show that every such class lies in $\Omega_d(X)_{\fl_*\rmD}$. 

    Since $Z$ is projective, we may take $N\in\bbZ$ with $N>4d$ such that $Z$ can be embedded in~$\bbP_k^N$. Let $i:Z\hra \bbP_k^N$ be the inclusion. Define $j\coloneqq(i,\phi):Z\ra \bbP_k^N\times X$. Let $pr_X:\bbP_k^N\times X \ra X$ be the projection. Then we have $\phi=pr_X\circ j$, and hence $[\phi:Z\ra X]=pr_{X*}[j:Z\ra\bbP_k^N\times X]$. Since $pr_X$ is flat, it suffices to show that $[j:Z\ra\bbP_k^N\times X]\in\Omega^*(\bbP_k^N\times X)_{\fl_*\rmD}$. Since $j$ is a closed embedding and $N>4d$, we may replace $X$ by $X\times\bbP_k^N$ and the class $[\phi:Z\ra X]$ by the class~$[j:Z\ra \bbP_k^N\times X]$. Therefore, we can assume that $\phi:Z \ra X$ is a closed embedding with $\dim(X)>4d$. 

    By the result of Koll\'ar and Voisin in \cite[Corollary 1.10]{KollarVoisin2024}, there exists a smooth cbs blow-up sequence, $i.e.$, a sequence of successive blow-ups along smooth complete bundle-sections
    \begin{equation*}
        \Pi: X_{r+1} \ra X_r \ra \cdots \ra X_0 \coloneqq X,
    \end{equation*}
    and a complete intersection subvariety $Z_{r+1}\subseteq X_{r+1}$ such that
    \begin{equation*}
        \Pi_*([Z_{r+1}])=[Z] \text{ in } \CH_d(X).
    \end{equation*}
    Moreover, one may assume that $Z_{r+1}$ is smooth, since it can be taken to be a general complete intersection of very ample divisors in a smooth divisor of $X_{r+1}$. This is important in our case, since it allows us to construct a standard cycle $[Z_{r+1}\hra X_{r+1}]\in\Omega_d(X_{r+1})$. Moreover, it belongs to~$\Omega_d(X_{r+1})_\rmD$ by construction.

    Let $\theta:\Omega\ra \CH$ be the natural morphism of oriented cohomology theories. It commutes with all pullbacks and projective pushforwards. In particular, it maps the standard cycles $[Z_{r+1}\hra X_{r+1}]$ and $[Z\hra X]$ to $[Z_{r+1}]\in \CH_d(X_{r+1})$ and $[Z]\in \CH_d(X)$ respectively. We have 
    \begin{align*}
        \theta(X)\circ \Pi_*([Z_{r+1}\hra X_{r+1}]) &= \Pi_*\circ \theta(X_{r+1})([Z_{r+1}\hra X_{r+1}])\\
        &= \Pi_*([Z_{r+1}]) = [Z] = \theta(X)([Z\hra X]).
    \end{align*}
    By Lemma~\ref{lemma: kernel of theta(X)}, we obtain
    \begin{equation}
    \label{eq: Pi_*([Z_r+1 hra X_r+1]) - [Z hra X] in ker(theta(X))}
        \Pi_*([Z_{r+1}\hra X_{r+1}]) - [Z\hra X] \in \ker(\theta(X))=\Omega_*(X)\times\bbL_{>0}\subseteq\Omega_*(X).
    \end{equation}

    The induction hypothesis and Remark \ref{remark: f_*D is a L_*-module and an Omega^*(X)_D-module} show that the degree $d$ part of $\Omega_*(X)\times\bbL_{>0}$ is contained in $\Omega_d(X)_{\fl_*\rmD}$. Therefore, (\ref{eq: Pi_*([Z_r+1 hra X_r+1]) - [Z hra X] in ker(theta(X))}) gives
    \begin{equation}
    \label{eq: Pi_*([Z_r+1 hra X_r+1]) - [Z hra X] in fl_*D}
        \Pi_*([Z_{r+1}\hra X_{r+1}])-[Z\hra X]\in\Omega_d(X)_{\fl_*\rmD}.
    \end{equation}
    Since $[Z_{r+1}\hra X_{r+1}]\in\Omega_d(X_{r+1})_\rmD\subseteq\Omega_d(X_{r+1})_{\fl_*\rmD}$, we obtain $\Pi_*([Z_{r+1}\hra X_{r+1}])\in\Omega_d(X)_{\fl_*\rmD}$ by Proposition~\ref{proposition: f_*D is preserved under pushforward along cbs blow-up}. Therefore, equation (\ref{eq: Pi_*([Z_r+1 hra X_r+1]) - [Z hra X] in fl_*D}) implies that $[Z\hra X]$ lies in $\Omega_d(X)_{\fl_*\rmD}$ as well, which completes the induction step.
\end{proof}

In fact, we can replace $\Omega^*(X)_{\fl_*\rmD}$ by $\Omega^*(X)_{\fl_*\rmVAD}$ in Theorem \ref{theorem: all classes are f_*D}, see Theorem \ref{theorem: all classes are fl_*VAD}. This relies on the following result, which shows that $\Omega^*(X)_{\fl_*\rmD}=\Omega^*(X)_{\fl_*\rmVAD}$.

\begin{prop}
\label{proposition: f_*VAD is a L_*-module}
    Let $X$ be a smooth projective variety. Then $\Omega_*(X)_{\fl_*\rmVAD}$ is an $\bbL_*$-module.
\end{prop}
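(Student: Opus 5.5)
The plan is to show that $a\cdot x\in\Omega_*(X)_{\fl_*\rmVAD}$ for every $a\in\bbL_*$ and every generator $x=\phi_*\bigl(\prod_{i=1}^s c_1(L_i)\bigr)$, where $\phi:Y\ra X$ is flat with $Y$ smooth projective and each $L_i$ very ample on $Y$. Since $\bbL_*=\Omega_*(k)$ is generated additively by classes $[W/k]$ with $W$ smooth projective, it suffices to treat $a=[W/k]$ with $W$ connected of dimension $m$. I will argue by induction on $m$, and inside that by induction on the $\Omega$-degree of the class $a\cdot x$. The case $m=0$ is trivial, since then $a$ is an integer.

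For the geometric step, let $\psi=\phi\circ pr_Y:W\times Y\ra X$. It is flat because $pr_Y$ is flat and $\phi$ is flat. By the description of external products and pushforwards of standard cycles in Section~\ref{subsection: Algebraic cobordism and its geometric interpretation}, together with the projection formula,
\begin{equation*}
a\cdot x=\psi_*\Bigl(\prod_{i=1}^s c_1(pr_Y^*L_i)\Bigr).
\end{equation*}
The bundles $pr_Y^*L_i$ are not very ample, so I fix a very ample line bundle $H$ on $W$. Each $M_i\coloneqq pr_Y^*L_i\otimes pr_W^*H$ is then very ample on $W\times Y$. Writing $h=c_1(pr_W^*H)$, the formal group law gives
\begin{equation*}
c_1(pr_Y^*L_i)=F\bigl(c_1(M_i),\chi(h)\bigr),
\end{equation*}
where $\chi$ is the formal inverse. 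Expanding gives $c_1(pr_Y^*L_i)=c_1(M_i)+(\text{terms divisible by }h)$, with all coefficients in $\bbL_*$. Multiplying out, $\prod_i c_1(pr_Y^*L_i)$ equals $\prod_i c_1(M_i)$ plus a finite sum of terms of the form $b\cdot h^k\cdot\prod_{i\in I}c_1(M_i)$ with $k\geq 1$ and $b\in\bbL_*$. This sum is finite because $\Omega^{>\dim}$ vanishes. The main term $\psi_*\prod_i c_1(M_i)$ lies in $\Omega_*(X)_{\fl_*\rmVAD}$ by definition.

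For a correction term, I choose $k$ general divisors in $|H|$. Their intersection is a smooth complete intersection $W_k\subset W$ of dimension $m-k<m$, and Proposition~\ref{proposition: geometric interpretation of top Chern class} gives $h^k=[W_k\times Y\hra W\times Y]$. The projection formula then yields
\begin{equation*}
\psi_*\Bigl(h^k\prod_{i\in I}c_1(M_i)\Bigr)=\psi'_*\Bigl(\prod_{i\in I}c_1(M_i|_{W_k\times Y})\Bigr),
\end{equation*}
where $\psi':W_k\times Y\ra X$ is again flat. Now $M_i|_{W_k\times Y}=L_i\boxtimes H|_{W_k}$ is still an exterior product, so I can run the same expansion backwards, converting $c_1(L_i\boxtimes H|_{W_k})$ into $c_1(pr_Y^*L_i)$ plus $h$-divisible terms. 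Each correction term then becomes an $\bbL_*$-combination of terms $[W'/k]\cdot\phi_*\bigl(\prod_{i\in J}c_1(L_i)\bigr)$ with $\dim W'<m$, together with terms in which a coefficient from $\bbL_{>0}$ multiplies a class of strictly smaller $\Omega$-degree. The first kind is handled by the induction on $m$; the second by the inner induction on $\Omega$-degree.

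I expect the bookkeeping of this double induction to be the main obstacle. The coefficients $b\in\bbL_{>0}$ produced by $F$ and $\chi$ are themselves $\bbL$-scalars acting on $\fl_*\rmVAD$-classes, so one must check that each appearance is strictly smaller in a well-founded order. The order I would use is the pair ($\bbL$-degree of the scalar, $\Omega$-dimension of the class being multiplied), compared lexicographically. The point to verify is that every term contains either a factor $h$, which strictly lowers $\dim W$, or a coefficient $b\in\bbL_{>0}$ of degree at most $m-1$ acting on a class of lower $\Omega$-dimension. I would confirm this with a degree count in $\Omega_*(W\times Y)$: since $a_{ij}\in\bbL_{i+j-1}$, the total degree is fixed, so a coefficient of large degree forces many extra $c_1$ factors, and I would check whether those factors can be controlled. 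If the count fails, the fallback is to strengthen the inductive statement to arbitrary $\bbL_*$-polynomial expressions $\psi_*\bigl(\sum b_\mu \prod c_1(M_i)\bigr)$ with all $M_i$ very ample, and to induct on $\dim X$-relative degree instead.
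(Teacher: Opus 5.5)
There is a genuine gap, and it sits exactly in the bookkeeping you flagged. No choice of well-founded order repairs it.

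Expand $\prod_i c_1(M_i)$ directly in powers of $h$; this is equivalent to your two-step expansion via $\chi$ and back. You get
\[
\psi_*\Bigl(\prod_{i=1}^s c_1(M_i)\Bigr)=a\cdot x+\sum_{j=1}^{m}\pi_{W*}\bigl(c_1(H)^j\bigr)\cdot\phi_*Q_j(L_\bullet).
\]
Here $Q_j(L_\bullet)$ is a polynomial in the $c_1(L_i)$ with $\bbL_*$-coefficients of unbounded degree, and it does not depend on $H$.

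For $1\le j<m$ these terms can be handled, but only if you induct on total degree first and scalar degree second. The class $\phi_*Q_j(L_\bullet)$ has total degree $d-m+j<d$, so the outer induction absorbs all its coefficients. Then $\pi_{W*}(c_1(H)^j)\in\bbL_{m-j}$ is handled by the inner induction on scalar degree at fixed $d$. With your order (scalar degree primary), coefficients of $Q_j$ of degree $\geq m$ already escape the induction.

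The real obstruction is the top term $j=m$. There $W_m$ is zero-dimensional, so the factor $h^m$ buys nothing. The term is $\pi_{W*}(c_1(H)^m)\cdot\phi_*Q_m(L_\bullet)$ with $\pi_{W*}(c_1(H)^m)\in\bbL_0=\bbZ$. This is an $\bbL_*$-combination of $\fl_*\rmVAD$-classes in the same total degree $d$, with coefficients of arbitrary degree, and it can even contain $a\cdot x$ itself.

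Take $W=\bbP^1$, $H=\cO(1)$, $s=1$. Since $h^2=0$, you get $c_1(pr_Y^*L_1)=c_1(M_1)-h-\sum_{i\geq 1}a_{i,1}\,h\,c_1(M_1)^i$, hence
\[
a\cdot x=\psi_*c_1(M_1)-\phi_*(1_Y)-\sum_{i\geq 1}a_{i,1}\,\phi_*\bigl(c_1(L_1)^i\bigr).
\]
Now $a_{1,1}=-[\bbP^1]$: push $c_1(\cO_{\bbP^2}(2))=2c_1(\cO(1))+a_{1,1}c_1(\cO(1))^2$ forward to a point, using that a smooth conic is a $\bbP^1$. So the $i=1$ term is exactly $a\cdot x$, and the identity says nothing about the class you want. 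Your fallback, strengthening to $\bbL_*$-polynomial expressions and changing the induction variable, does not touch this circularity.

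The missing idea is to vary the auxiliary bundle. Call $\beta(H)$ the top term.
\begin{itemize}
\item Replacing $H$ by $H^{\otimes N}$ keeps all terms with $j<m$ in $\Omega_d(X)_{\fl_*\rmVAD}$, by the argument above.
\item On the $m$-dimensional $W$ we have $c_1(H^{\otimes N})=N\,c_1(H)+(\text{higher powers of }c_1(H))$ and $c_1(H)^{m+1}=0$. Hence $c_1(H^{\otimes N})^m=N^m c_1(H)^m$. Since $Q_m(L_\bullet)$ does not depend on $H$, this gives $\beta(H^{\otimes N})=N^m\beta(H)$.
\item Therefore $a\cdot x+N^m\beta(H)\in\Omega_d(X)_{\fl_*\rmVAD}$ for every $N\geq 1$.
\item Subtract two such relations. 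The integers $N^m-N'^m$ have gcd $1$, since no prime $p$ divides $p^m-1$. By B\'ezout, $\beta(H)\in\Omega_d(X)_{\fl_*\rmVAD}$, and hence $a\cdot x\in\Omega_d(X)_{\fl_*\rmVAD}$.
\end{itemize}
This rescaling step, combined with induction on total degree first and scalar degree second, is how the paper closes the argument.
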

\begin{proof}
    We consider the following statements $P(c,d)$ parametrized by $d\geq c\geq 0$:
    \begin{equation*}
        P(c,d):\ \bbL_{c}\times\Omega_{d-c}(X)_{\fl_*\rmVAD} \subseteq \Omega_d(X)_{\fl_*\rmVAD}.
    \end{equation*}
    Once we prove it for any $d\geq c\geq 0$, the original proposition is proved.

    In what follows, we prove these statements by double induction, first on $d$, then on $c$. The statement is trivial for $c=0$ and arbitrary $d$. So we may assume that the statement $P(c',d')$ is true for any pair $(c',d')$ satisfying the condition that (1) $d'<d$, or (2) $d'=d$ and $c'<c$. We want to prove that $P(c,d)$ also holds.

    Let $a\in\bbL_c$ and $\alpha\in\Omega_{d-c}(X)_{\fl_*\rmVAD}$. We aim to show that $a\times\alpha\in\Omega_d(X)_{\fl_*\rmVAD}$. We may assume that $a=[Z/k]$ is represented by some smooth projective variety $Z$ of dimension $c$, and that~$\alpha=\phi_*(\prod_{i=1}^sc_1(L_i))$ for some flat morphism $\phi:Y \ra X$ from a smooth projective variety $Y$, where $\{L_i\ra Y,\ 1\leq i \leq s\}$ are $s$ very ample line bundles over $Y$. By comparing the degrees, we have $\dim(Y)=d-c+s$.
    
    Let $\pi_Z:Z\ra \Spec(k)$ be the structure morphism of $Z$. We have $a=\pi_{Z*}(1_Z)$, and hence
    \begin{equation}
    \label{eq: a*alpha=(pi_Z*phi)_*()}
        a\times\alpha = \pi_{Z*}(1_Z)\times\phi_*(\prod_{i=1}^s c_1( L_i))
        = (\pi_Z\times\phi)_*(1_Z\times\prod_{i=1}^s c_1(L_i)).
    \end{equation}

    Let $p:Z\times Y\ra Y$ be the projection onto $Y$. Then we have $1_Z\times\beta = p^*\beta$ for any $\beta\in\Omega^*(Y)$. Therefore, equation (\ref{eq: a*alpha=(pi_Z*phi)_*()}) becomes 
    \begin{equation}
    \label{eq: a*alpha=(pi_Z*phi)_*(prod_i=1^s c_1(p^*L_i))}
        a\times\alpha = (\pi_Z\times\phi)_*p^*(\prod_{i=1}^s c_1(L_i)) = (\pi_Z\times\phi)_*(\prod_{i=1}^s c_1(p^*L_i)).
    \end{equation}
    
    Let $q:Z\times Y \ra Z$ be the projection onto $Z$. For any very ample line bundle $L\ra Z$ over~$Z$, the line bundle $q^*L\otimes p^*L_i\ra Z\times Y$ is very ample over $Z\times Y$ for any $1\leq i\leq s$. Since $\phi$ is flat, the morphism $\pi_Z\times\phi:Z\times Y\ra \Spec(k)\times X=X$ is also flat. Therefore, the class
    \begin{equation}
    \label{eq: definition of gamma}
        \gamma\coloneqq (\pi_Z\times\phi)_*(\prod_{i=1}^s c_1(q^*L\otimes p^*L_i))
    \end{equation}
    lies in $\Omega^*(X)_{\fl_*\rmVAD}$ by definition. We compute the difference between $\gamma$ and $a\times\alpha$.

    For each $1\leq i\leq s$, we have 
    \begin{equation}
    \label{eq: c_1(q^*L otimes p^*L_i) = c_1(q^*L) +_Omega c_1(p^*L_i)}
        c_1(q^*L\otimes p^*L_i) = c_1(q^*L) +_\Omega c_1(p^*L_i) = c_1(p^*L_i) + \sum_{j\geq 1}c_1(q^*L)^j\cdot P_j(c_1(p^*L_i))
    \end{equation}
    for some polynomials $P_j(x)\in \bbL^*[x]$ which depend only on $j$, and not depend on $L$ or $L_i$. By taking the product of (\ref{eq: c_1(q^*L otimes p^*L_i) = c_1(q^*L) +_Omega c_1(p^*L_i)}) over $1\leq i\leq s$, we obtain
    \begin{align}
        \prod_{i=1}^s c_1(q^*L\otimes p^*L_i)&= \prod_{i=1}^s c_1(p^*L_i) + \sum_{j\geq 1}c_1(q^*L)^j \cdot Q_j(c_1(p^*L_i), 1\leq i\leq s) \notag \\
        &= \prod_{i=1}^s c_1(p^*L_i) + \sum_{j\geq 1}c_1(L)^j \times Q_j(c_1(L_i), 1\leq i\leq s) \label{eq: prod_{i=1}^s c_1(q^*L otimes p^*L_i)}
    \end{align}
    for some polynomials $Q_j(x_1,\cdots, x_s)\in\bbL^*[x_1,\cdots,x_s]$ which depend only on $s$, and not on $L$ or $L_i$.

    Pushing forward (\ref{eq: prod_{i=1}^s c_1(q^*L otimes p^*L_i)}) along $\pi_{Z}\times\phi$ and using (\ref{eq: a*alpha=(pi_Z*phi)_*(prod_i=1^s c_1(p^*L_i))}) and (\ref{eq: definition of gamma}), we obtain 
    \begin{equation}
    \label{eq: gamma=a*alpha+sum over j}
        \gamma = a\times\alpha +\sum_{j\geq 1}\pi_{Z*}(c_1(L)^j)\times \phi_*Q_j(c_1(L_i),1\leq i\leq s)\ \text{in}\ \Omega_d(X).
    \end{equation}
    Since $c_1(L)^j\in\Omega_{c-j}(Z)$ and hence vanishes for $j>c$, the sum in (\ref{eq: gamma=a*alpha+sum over j}) is in fact finite, with $1\leq j\leq c$.

    Denote by $Q_j(L_\bullet)$ the class $Q_j(c_1(L_i),1\leq i\leq s)$, it depends only on $L_i,1\leq i\leq s$ and not on $L$. For $1\leq j<c$, since $Q_j$ is a polynomial with~$\bbL^*$ coefficients and the~$L_i$ are very ample line bundles over $X$, each class $\phi_*Q_j(L_\bullet)$ in (\ref{eq: gamma=a*alpha+sum over j}) is in~$\bbL_*\times\Omega_*(X)_{\fl_*\rmVAD}$. Meanwhile, since $\pi_{Z*}(c_1(L)^j)\in\bbL_{c-j}$, the degree of the class $\phi_*Q_j(L_\bullet)$ is equal to $d-c+j$, which is strictly lower than $d$ since $j<c$. Therefore, by induction hypothesis (1), the class $\phi_*Q_j(L_\bullet)$ is in $\Omega_{d-c+j}(X)_{\fl_*\rmVAD}$. Since $j\geq 1$, we may apply induction hypothesis (2) to the coefficient $\pi_{Z*}(c_1(L)^j)$ in $\bbL_{c-j}$ and the class $\phi_*Q_j(L_\bullet)$ in~$\Omega_{d-c+j}(X)_{\fl_*\rmVAD}$ to show that their product lies in $\Omega_d(X)_{\fl_*\rmVAD}$.

    We denote by $\beta(L)$ the remaining term in the right hand side of (\ref{eq: gamma=a*alpha+sum over j}) with index $j=c$, i.e., 
    \begin{equation}
    \label{eq: definition of beta(L)}
        \beta(L)\coloneqq\pi_{Z*}(c_1(L)^c)\times\phi_*Q_c(L_\bullet).
    \end{equation}

    The argument above shows that 
    \begin{equation}
    \label{eq: a*alpha+beta(L)}
        a\times\alpha +\beta(L)\in \Omega_d(X)_{\fl_*\rmVAD}.
    \end{equation}
    Since this result holds for any very ample line bundle $L$ over $X$, we may replace $L$ in (\ref{eq: a*alpha+beta(L)}) by $L^{\otimes m}$ for any positive integer $m$, and obtain 
    \begin{equation*}
        a\times\alpha + \beta(L^{\otimes m})\in\Omega_d(X)_{\fl_*\rmVAD}.
    \end{equation*}
    
    We now compute $\beta(L^{\otimes m})$. Since we have
    \begin{equation*}
        c_1(L^{\otimes m}) = mc_1(L)+ \sum_{k\geq 2}a_k\times c_1(L)^k
    \end{equation*}
    for some $a_k\in\bbL_*$ and 
    \begin{equation*}
        c_1(L)^{c+1}=0\in\Omega_{-1}(Z),
    \end{equation*}
    it follows that 
    \begin{equation*}
        c_1(L^{\otimes m})^c=m^c c_1(L)^c\ \text{in}\ \Omega_0(Z).
    \end{equation*}
    Since the class $Q_c(L_\bullet)$ depends only on $L_i,1\leq i\leq s$ and not on $L$, equation (\ref{eq: definition of beta(L)}) implies that 
    \begin{equation*}
        \beta(L^{\otimes m})=m^c\beta(L),
    \end{equation*}
    and hence 
    \begin{equation}
    \label{eq: a*alpha+m^c beta(L) in fl_*VAD}
        a\times\alpha + m^c\beta(L)\in\Omega_d(X)_{\fl_*\rmVAD},
    \end{equation}
    for any positive integer $m$.

    Replacing $m$ by another positive integer $m'$ in (\ref{eq: a*alpha+m^c beta(L) in fl_*VAD}) and subtracting the two classes, we obtain 
    \begin{equation*}
    \label{eq: (m^c-m'^c)beta(L) in fl_*VAD}
        (m^c-m'^c)\beta(L)\in\Omega_d(X)_{\fl_*\rmVAD}.
    \end{equation*}
    Choosing another pair $(l,l')$ such that $\gcd(m^c-m'^c,l^c-l'^c)=1$ and applying B\'ezout's theorem, we deduce that 
    \begin{equation}
    \label{eq: beta(L) in fl_*VAD}
        \beta(L)\in\Omega_d(X)_{\fl_*\rmVAD}.
    \end{equation}
    Combining (\ref{eq: a*alpha+beta(L)}) with (\ref{eq: beta(L) in fl_*VAD}), we obtain 
    \begin{equation*}
        a\times\alpha\in\Omega_d(X)_{\fl_*\rmVAD},
    \end{equation*}
    which completes the induction step.
\end{proof}

\begin{thm}
\label{theorem: all classes are fl_*VAD}
    Let $X$ be a smooth projective variety. Then 
    \begin{equation*}
        \Omega^*(X) = \Omega^*(X)_{\fl_*\rmD} = \Omega^*(X)_{\fl_*\rmVAD}.
    \end{equation*}
\end{thm}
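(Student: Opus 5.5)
The first equality $\Omega^*(X)=\Omega^*(X)_{\fl_*\rmD}$ is exactly Theorem~\ref{theorem: all classes are f_*D}. It therefore remains to prove $\Omega^*(X)_{\fl_*\rmD}=\Omega^*(X)_{\fl_*\rmVAD}$. The inclusion $\supseteq$ holds by definition, since $\Omega^*(Y)_{\rmVAD}\subseteq\Omega^*(Y)_\rmD$ for every $Y$. So the real task is to show that every generator $\phi_*(\prod_{i=1}^s c_1(M_i))$ lies in $\Omega^*(X)_{\fl_*\rmVAD}$, where $\phi:Y\ra X$ is flat from a smooth projective $Y$ and the $M_i$ are arbitrary line bundles on $Y$.

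Fix an ample line bundle $H$ on $Y$. For each $i$, choose $N$ large so that $L_i'\coloneqq M_i\otimes H^{\otimes N}$ and $L_i''\coloneqq H^{\otimes N}$ are both very ample; then $M_i\cong L_i'\otimes L_i''^\vee$. The formal group law, together with its formal inverse $\chi(v)$ with coefficients in $\bbL^*$ (so that $c_1(L^\vee)=\chi(c_1(L))$), gives
\begin{equation*}
c_1(M_i)=F\bigl(c_1(L_i'),\chi(c_1(L_i''))\bigr),
\end{equation*}
a power series in $c_1(L_i')$ and $c_1(L_i'')$ with $\bbL^*$-coefficients. Because $c_1$ of any line bundle on $Y$ is nilpotent (its powers beyond $\dim(Y)$ vanish), this series is in fact a polynomial in these classes. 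Expanding the product over $i$ then shows that $\prod_i c_1(M_i)$ is a finite $\bbL^*$-linear combination of monomials in first Chern classes of very ample line bundles, i.e.
\begin{equation*}
\prod_i c_1(M_i)\in\bbL^*\cdot\Omega^*(Y)_{\rmVAD}.
\end{equation*}
One point needs care: a monomial of degree zero would be $1_Y$, which is not obviously in $\Omega^*(Y)_{\rmVAD}$ if that ring is read as generated without unit. This does not arise for $s\geq1$, since each factor $c_1(M_i)$ contributes at least one first Chern class. The case $s=0$, i.e.\ $1_Y$, is already in $\Omega^*(Y)_{\rmVAD}$ if the ring has a unit.

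Pushing forward along the flat morphism $\phi$ and using $\bbL^*$-linearity of $\phi_*$, we obtain
\begin{equation*}
\phi_*\Bigl(\prod_i c_1(M_i)\Bigr)\in\bbL^*\cdot\Omega^*(X)_{\fl_*\rmVAD}.
\end{equation*}
By Proposition~\ref{proposition: f_*VAD is a L_*-module}, $\Omega^*(X)_{\fl_*\rmVAD}$ is an $\bbL^*$-submodule, so this set equals $\Omega^*(X)_{\fl_*\rmVAD}$ itself, which concludes the proof.

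The substantive input is Proposition~\ref{proposition: f_*VAD is a L_*-module}, which is already established. The only remaining point to check is that $\chi$ and the composite series have $\bbL^*$-coefficients, which is standard.
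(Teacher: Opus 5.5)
Your proposal is correct and follows essentially the same route as the paper. You write each line bundle as $L_1\otimes L_2^\vee$ with $L_1,L_2$ very ample, use the formal group law and nilpotence of $c_1$ to get $\Omega^*(Y)_\rmD\subseteq\bbL^*\cdot\Omega^*(Y)_{\rmVAD}$, push forward along the flat map, and conclude by Proposition~\ref{proposition: f_*VAD is a L_*-module} together with Theorem~\ref{theorem: all classes are f_*D}. Your concern about the unit is harmless: $\Omega^*(Y)_{\rmVAD}$ is defined as a subring, so $1_Y$ (the empty product) already belongs to it.
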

\begin{proof}
    Since any line bundle $L\ra X$ over $X$ can be written as $L=L_1\otimes L_2^\vee$ where $L_1,L_2\ra X$ are very ample line bundles, the class $c_1(L)=c_1(L_1)-_\Omega c_1(L_2)$ can be written as a polynomial in the classes $c_1(L_i)$ with $\bbL^*$-coefficients. Therefore, we have 
    \begin{equation*}
        \Omega^*(X)_\rmD\subseteq \bbL^*\times\Omega^*(X)_{\rmVAD}\subseteq\Omega^*(X),\ \text{for all}\ X\in\Sm_k,
    \end{equation*}
    and hence 
    \begin{equation*}
        \Omega^*(X)_{\fl_*\rmD}\subseteq \bbL^*\times\Omega^*(X)_{\fl_*\rmVAD}=\Omega^*(X)_{\fl_*\rmVAD},\ \text{for all}\ X\in\Sm_k,
    \end{equation*}
    by Proposition \ref{proposition: f_*VAD is a L_*-module}. Meanwhile, the inclusion $\Omega^*(X)_{\fl_*\rmVAD}\subseteq\Omega^*(X)_{\fl_*\rmD}$ holds by definition. Therefore, we have
    \begin{equation*}
        \Omega^*(X)_{\fl_*\rmD}=\Omega^*(X)_{\fl_*\rmVAD}.
    \end{equation*}
    Finally, we complete the proof by applying Theorem \ref{theorem: all classes are f_*D}.
\end{proof}

\subsection{Smoothing cycles in $\Omega^*$}

In this section, we will prove our main result, Theorem \ref{main theorem}, by combining Theorem \ref{theorem: all classes are fl_*VAD} with the following proposition.

\begin{prop}
\label{proposition: f_*VAD is smoothable within the Whitney range}
    Let $X$ be a smooth projective variety. Then
    \begin{equation*}
        \Omega_d(X)_{\fl_*\rmVAD} \subseteq \Omega_d(X)_{\rmsm}
    \end{equation*}
    for $2d<\dim(X)$.   
\end{prop}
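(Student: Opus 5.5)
Since $\Omega_d(X)_{\rmsm}$ (the subgroup generated by classes $[Z\hra X]$ with $Z\subset X$ a smooth closed subvariety) is a subgroup, it suffices to treat a generator of $\Omega_d(X)_{\fl_*\rmVAD}$. Such a generator has the form $\phi_*(\prod_{i=1}^s c_1(L_i))$. Here $\phi:Y\ra X$ is a flat morphism from a smooth projective variety $Y$, and the $L_i$ are very ample line bundles on $Y$. Counting degrees gives $\dim(Y)=d+s$.

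The plan is to realize $\prod_i c_1(L_i)$ geometrically and then push forward. Choose general sections $\sigma_i\in H^0(Y,L_i)$ and let $H_i=\{\sigma_i=0\}$. By Bertini (characteristic $0$), the intersection $W\coloneqq H_1\cap\cdots\cap H_s$ is a smooth complete intersection of dimension $d$; it is nonempty or empty, and the empty case only gives $0$. Moreover $W$ is the zero-locus of the transverse section $(\sigma_1,\dots,\sigma_s)$ of $\bigoplus_i L_i$. So by Proposition \ref{proposition: geometric interpretation of top Chern class},
\begin{equation*}
[W\hra Y]=c_s\Big(\bigoplus_{i=1}^s L_i\Big)=\prod_{i=1}^s c_1(L_i)\ \text{in}\ \Omega^s(Y),
\end{equation*}
using the Whitney formula for direct sums. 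Pushing forward yields $\phi_*(\prod_i c_1(L_i))=[\phi|_W:W\ra X]$ as a standard cycle.

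Next we apply Theorem \ref{KV's theorem: criteria for smoothability}. Since $\phi$ is proper and flat between smooth varieties and $W$ is a general complete intersection of very ample divisors, $W$ is in general position in the sense of \cite[Remark 2.2]{KollarVoisin2024}. Since $2\dim(W)=2d<\dim(X)$, the restriction $\phi|_W:W\ra\phi(W)$ is an isomorphism onto a smooth closed subvariety $\phi(W)\subset X$. Hence $[\phi|_W:W\ra X]=[\phi(W)\hra X]$ in $\Omega_d(X)$, since the two standard cycles differ by an isomorphism of sources. This class lies in $\Omega_d(X)_{\rmsm}$, which proves the proposition.

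The main obstacle is matching the hypothesis ``in general position'' in Theorem \ref{KV's theorem: criteria for smoothability} with our setting. That theorem is phrased for $Z\subset X$, whereas here $W\subset Y$ is the source of a flat map $Y\ra X$. I would check that \cite[Remark 2.2]{KollarVoisin2024} indeed allows general complete intersections of very ample divisors in $Y$ with respect to $\phi$. If it requires the hypersurfaces to be general in a fixed very ample linear system, we can use the full linear systems $|L_i|$, which are basepoint-free and separate points and tangent vectors. That is precisely the ingredient needed for the dimension count ensuring injectivity and immersivity of $\phi|_W$ when $2d<\dim(X)$.
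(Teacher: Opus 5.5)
Your proposal is correct and follows essentially the same route as the paper. You realize $\prod_i c_1(L_i)$ as the class $[W\hra Y]$ of a general smooth complete intersection, then invoke Kollár--Voisin's Proposition 2.1 (via their Remark 2.2) to get that $\phi|_W$ is an isomorphism onto a smooth subvariety of $X$; the only cosmetic differences are that you get $[W\hra Y]$ in one step from $c_s(\bigoplus_i L_i)$ rather than by iterating the hypersurface case, and you rightly note that the cited criterion should be read with the subvariety inside $Y$, not $X$.
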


\begin{proof}
    Let $\alpha\in\Omega_d(X)_{\fl_*\rmVAD}$. We may assume that $\alpha=\phi_*(\prod_{i=1}^s c_1(L_i))$, where $\phi:Y\ra X$ is a flat morphism from a smooth projective variety $Y$ to $X$, the line bundles $L_i\ra Y$ are very ample, and $s=\dim(Y)-d$.

    Since the $L_i$ are very ample line bundles, general divisors $D_i$ in the linear systems $|L_i|$ have smooth complete intersection $Z\coloneqq\bigcap_{i=1}^s D_i$ in $Y$. Moreover, we may further require that $\bigcap_{i=1}^j D_i$ is a smooth hypersurface in $\bigcap_{i=1}^{j-1} D_i$ for any $1\leq j\leq s$. This implies that 
    \begin{equation*}
        \prod_{i=1}^s c_1(L_i)=[Z\hra Y]\ \text{in}\ \Omega_d(Y).
    \end{equation*} 

    By \cite[Remark 2.2]{KollarVoisin2024}, we may apply \cite[Proposition 2.1]{KollarVoisin2024} to the morphism $\phi$ and the subvariety $Z$. It follows that $\phi|_Z:Z\ra \phi(Z)$ is an isomorphism, and hence the class
    \begin{equation*}
        \alpha=\phi_*(\prod_{i=1}^s c_1(L_i))=\phi_*([Z\hra Y])=[\phi(Z)\hra X]
    \end{equation*}
    belongs to $\Omega_d(X)_{\sm}$.
\end{proof}

Combining Proposition \ref{proposition: f_*VAD is smoothable within the Whitney range} with Theorem \ref{theorem: all classes are fl_*VAD}, we obtain our main result.

\begin{thm}
\label{main theorem}
    Let $X$ be a smooth projective variety. Then all classes in $\Omega_d(X)$ are smoothable in the Whitney range $2d<\dim(X)$, i.e., we have
    \begin{equation*}
        \Omega_d(X) = \Omega_d(X)_{\sm}
    \end{equation*}
    for $2d<\dim(X)$.
\end{thm}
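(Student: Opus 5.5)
The plan is to chain Theorem \ref{theorem: all classes are fl_*VAD} with Proposition \ref{proposition: f_*VAD is smoothable within the Whitney range}. Essentially no new geometric input should be needed; the work is in checking that the two statements fit together degree by degree.

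Fix $d$ with $2d<\dim(X)$. By Theorem \ref{theorem: all classes are fl_*VAD} we have $\Omega^*(X)=\Omega^*(X)_{\fl_*\rmVAD}$. The first step is to restrict this equality to homological degree $d$. This is legitimate because $\Omega^*(X)_{\fl_*\rmVAD}$ is generated by homogeneous elements $\phi_*(\prod_{i=1}^s c_1(L_i))$, each of which lies in $\Omega_{\dim(Y)-s}(X)$. Hence the subgroup is graded, and its degree $d$ part $\Omega_d(X)_{\fl_*\rmVAD}$ is generated by those generators with $\dim(Y)-s=d$. We therefore get $\Omega_d(X)=\Omega_d(X)_{\fl_*\rmVAD}$.

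Next, Proposition \ref{proposition: f_*VAD is smoothable within the Whitney range} gives $\Omega_d(X)_{\fl_*\rmVAD}\subseteq\Omega_d(X)_{\sm}$ in the range $2d<\dim(X)$. Here $\Omega_d(X)_{\sm}$ is understood as the subgroup generated by the classes $[W\hra X]$ of smooth closed subvarieties $W\subset X$ of dimension $d$. The reverse inclusion $\Omega_d(X)_{\sm}\subseteq\Omega_d(X)$ is trivial, and together these give the equality $\Omega_d(X)=\Omega_d(X)_{\sm}$.

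The only point requiring care, which is the mild obstacle, is the grading check in the first step. One must make sure that the generators used in Proposition \ref{proposition: f_*VAD is smoothable within the Whitney range} really exhaust the degree $d$ piece. Concretely, an element of $\Omega_d(X)_{\fl_*\rmVAD}$ is a $\bbZ$-linear combination of homogeneous generators of degree $d$, and the components in other degrees cancel. Each such generator is handled individually by that proposition, so smoothability is preserved under the linear combination.
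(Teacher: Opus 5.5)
Your proposal is correct and follows the paper's argument exactly: the main theorem is obtained by combining Theorem~\ref{theorem: all classes are fl_*VAD} with Proposition~\ref{proposition: f_*VAD is smoothable within the Whitney range}. Restricting the first result to degree $d$ is harmless because the generators $\phi_*(\prod_i c_1(L_i))$ are homogeneous; you check this explicitly, while the paper leaves it implicit.
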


\bibliographystyle{myamsalpha}
\bibliography{references}

\end{document}